\def\myparagraph{\@startsection{paragraph}{4}{\z@}%
    {1.25ex \@plus1ex \@minus.2ex}%
    {-1em}%
    {\normalfont\normalsize\bfseries}}
\tikzset{
	Subseteq/.style={
		draw=none,
		every to/.append style={
			edge node={node [sloped, allow upside down, auto=false]{$\subset$}}}
	},
	Supseteq/.style={
	draw=none,
	every to/.append style={
		edge node={node [sloped, allow upside down, auto=false]{$\supseteq$}}}
		}
}
\newtheorem{thm}{Theorem}[section] 
\newtheorem{thmx}{Theorem}
\numberwithin{equation}{section}
\newtheorem{cor}[thm]{Corollary}
\newtheorem{corollary}[thm]{Corollary}
\newtheorem{lemma}[thm]{Lemma}
\newtheorem{prop}[thm]{Proposition}
\theoremstyle{definition}
\newtheorem{defn}[thm]{Definition}
\newtheorem*{claim*}{Claim}
\newtheorem{assumption}[thm]{Assumption}
\newtheorem{notation}[thm]{Notation}
\newtheorem*{notationconvention}{Notation and conventions}
\theoremstyle{remark}
\newtheorem{remark}[thm]{Remark}
\newtheorem{hooptedoodle}[thm]{Hooptedoodle}
\newtheorem{example}[thm]{Example}
\newtheorem{problem}[thm]{Problem}
\newcommand{\etale}{{\'e}tale}
\newcommand{\LCH}{locally compact, Hausdorff}
\newcommand{\E}{\mathcal{E}}  
\newcommand{\cK}{\mathcal{C}} 
\newcommand{\cM}{\mathcal{M}} 
\newcommand{\cN}{\mathcal{N}} 
\newcommand{\cP}{\mathcal{P}} 
\newcommand{\cQ}{\mathcal{Q}} 
\newcommand{\cU}{\mathcal{U}} 
\newcommand{\mfs}{\mathfrak{s}} 
\newcommand{\mft}{\mathfrak{t}} 
\newcommand{\z}{^{(0)}}
\newcommand\comp{^{(2)}}
\newcommand\inv{^{-1}}
\newcommand{\Iso}[1]{\operatorname{Iso}\left(#1\right)} 
\newcommand{\Int}[2][]{\operatorname{int}_{#1}\left(#2\right)} 
\DeclareMathOperator{\supp}{supp}
    \newcommand\suppo{\supp^{\circ}}
    \newcommand\suppoG{\supp_{G}^{\circ}}
\DeclareMathOperator{\dom}{dom}
\newcommand\norm[1]{\|#1\|}
\def\mathcs{C^{*}}
\newcommand{\cs}{\ensuremath{\mathcs}}
\newcommand{\red}{r}
\newcommand{\csr}{\cs_{\red}}
\let\ipscriptstyle=\scriptscriptstyle
\def\lipsqueeze{{\mskip -3.0mu}}
\def\ripsqueeze{{\mskip -3.0mu}}
    \newcommand{\bfp}[2]{\lipsqueeze\tensor*[_{\ipscriptstyle #1}]{\times}{_{\ipscriptstyle #2}}\ripsqueeze} 
    \newcommand{\bfpsr}{\bfp{s}{r}}
\newcommand{\intA}[1]{{#1}^{\sharp}} 
\newcommand{\twPD}[1][\E]{\widehat{S}^{#1}}
\newcommand{\actiongpd}[3][]{{#3 \tensor*[_{#1}]{\ltimes}{} #2}} 
    \newcommand{\Weylgpd}[1][\twPD]{\actiongpd{#1}{(G/S)}}
    \newcommand{\Weyltwist}[1][\twPD]{\actiongpd{#1}{(\E/S)}}
\newcommand{\normalgeq}{\mathrel{\unrhd}} 
\newcommand{\atgext}{\E\to G \normalgeq S} 
\newcommand{\cocycle}{\operatorname{\mathsf{c}}}
\newcommand{\coboundary}{\operatorname{\mathsf{d}}}
\renewcommand{\vec}[1]{\mathbf{#1}}
    \newcommand\mvisiblespace[1][.7em]{%
        	\makebox[#1]{%
        		\kern.07em
        		\vrule height.3ex
        		\hrulefill
        		\vrule height.3ex
        		\kern.07em
        	}
        }
\newcommand{\HleftX}{
        \mathchoice
              {\displaystyle\mathbin\smalltriangleright}
              {\textstyle\mathbin\smalltriangleright}
              {\scriptstyle\mathbin\smalltriangleright}
              {\scriptscriptstyle\mathbin\smalltriangleright}
        }
\newcommand{\XleftG}{
        \mathchoice
              {\displaystyle\mathbin\smallblacktriangleright}
              {\textstyle\mathbin\smallblacktriangleright}
              {\scriptstyle\mathbin\smallblacktriangleright}
              {\scriptscriptstyle\mathbin\smallblacktriangleright}
        }
\begin{document}

\begin{abstract}
We identify which conditions on an open normal subgroupoid $S$ of a \LCH, \etale\ groupoid $G$ with twist $\E$ are necessary and sufficient for the subgroupoid's reduced twisted $\cs$-algebra $\csr(S;\E_{S})$ to be a $\cs$-diagonal in the ambient groupoid $\cs$-algebra $\csr(G;\E)$. We do so by first giving an explicit description of the Weyl groupoid and Weyl twist associated to any non-traditional Cartan subalgebra, that is, a Cartan subalgebra that is induced from a non-trivial open normal subgroupoid, as studied in \cite{DWZ:Twist}. We then combine this description with Kumjian-Renault theory to establish the necessary and sufficient conditions to get a $\cs$-diagonal.
\end{abstract}

\thanks{I was supported by an FWO Senior Postdoctoral Fellowship (1206124N) during my time at KU Leuven. I would like to thank Carlos Campoy, Elizabeth Gillaspy, Bartosz Kwaśniewski, Aidan Sims, and Stefaan Vaes for helpful discussions. I am further indebted to Jean Renault for helping me better understand the relationship of the current work to that in \cite{IKRSW:2021:Extensions,IKRSW:2021:Pushout,Renault:2023:Ext}}
\author[A. Duwenig]{Anna Duwenig}
\address{School of Mathematics and Statistics, UNSW Sydney, Australia}
\email{a.duwenig@unsw.edu.au}
\title{Non-traditional $\cs$-diagonals in twisted groupoid $\cs$-algebras}

\maketitle

\section{Introduction}

This paper is concerned with open normal subgroupoids~$S$ of \etale\ groupoids~$G$, and the inclusions of twisted $\cs$-algebras $\cs_{r}(S;\E_{S})\subset\cs_{r}(G;\E) $ that they induce. Our focus is on the situation in which the subalgebra is Cartan, and we will determine necessary and sufficient conditions on~$S$ for this subalgebra to even be a $\cs$-diagonal. To motivate this endeavor, let us start by discussing the case where $S=G\z$ is the groupoid's unit space, which is the `traditional' situation that was solved by Kumjian and Renault.

Since~$G$ is \etale, $G\z$ is an open subset, and so the map ``extension by zero'' 
   extends from $C_{c}(G\z )\to C_{c}(G;\E )$ to an inclusion
  $C_0(G\z )\subset \cs_{r}(G;\E)$. 
  Renault showed in~\cite[Theorem 5.2]{Renault:Cartan}  that this subalgebra is maximal abelian in $\cs_{r}(G;\E)$ if and only if~$G$ is effective, i.e., the interior of the isotropy is nothing more than the unit space $G\z $. Work of Kumjian \cite{Kum:Diags}, predating that of Renault, found that $C_0(G\z )$ furthermore has the {\em unique extension property} if and only if $G$ is even principal, meaning that the unit space is the entire isotropy rather than just its interior. In summary, certain topological and algebraic properties of the groupoid $G$ are reflected in the inclusion of $C_0(G\z )$ in $ \cs_{r}(G;\E)$.
  
  What is most remarkable about Kumjian and Renault's work, however, is that it provides a recipe for how to reconstruct the groupoid $G$ and twist $\E$ from the masa inclusion $C_0(G\z )\subset \cs_{r}(G;\E)$. For this, it was necessary to realize not only that (1) $C_0(G\z )$ is masa, but that it satisfies two additional properties: (2) there exists a faithful conditional expectation $\cs_{r}(G;\E)\to C_0(G\z )$, induced by the map that restricts $C_c$-functions on $\E$ to $G\z$; and (3) $C_0(G\z )$ is regular in $\cs_{r}(G;\E)$, meaning that its normalizers generate the ambient groupoid $\cs$-algebra. These three properties combined make $C_0(G\z )\subset \cs_{r}(G;\E)$ a {\em Cartan inclusion}, and  Kumjian--Renault theory allows you to start with such a Cartan inclusion $B\subset A$ and `reconstruct' the twisted groupoid $\E\to G$ that realizes $A$ as its $\cs$-algebra and $B$ as the traditional Cartan subalgebra, $C_0(G\z)$. This paper will use this reconstruction recipe, but applied to {\em non-traditional} Cartan subalgebras:

 When $G$ is not effective, meaning that it contains an open abelian subgroupoid~$S$ that is strictly larger than $G\z $ \cite[Lemma 3.10]{DWZ:2025:Twist}, then $C_0(G\z )$ is  properly contained in the abelian $\csr(S;\E_{S})$ and is hence not a masa in $\csr(G;\E)$. 
  But there is a chance that the subalgebra $\csr(S;\E_{S})$ is as well behaved as $C_0(G\z)$:  If $S$ is closed, we get a conditional expectation $\csr(G;\E)\to\csr(S;\E_{S})$, likewise induced by restriction of functions \cite[Lemma 3.4]{BEFPR:2021:Intermediate}; and if~$S$ is normal, then $\csr(S;\E_{S})$ is regular in $\csr(G;\E)$ \cite[Corollary 3.25]{DWZ:2025:Twist}. In other words, any clopen, normal subgroupoid $S$ of $G$ gives rise to a subalgebra that satisfies properties (2) and (3) above. But when does it satisfy (1), i.e., when is it masa? The answer was given in \cite{DWZ:2025:Twist}, building on previous work in \cite{DGN:Weyl}, and can be summarized as follows, where we write for $u\in G\z$ and $e\in\E(u)$,
    \[
        [e, \E_{S} ]
        \coloneqq  
        \left\{
        e\inv\sigma\inv e\sigma
        :
        \sigma\in  \E_{S} (u)
        \right\}
     \]
     and where $\Int[\E]{X}$ denotes the interior in $\E$ of a subset $X$.
\begin{thm}[{\cite[Corollary 3.23]{DWZ:2025:Twist}}]\label{thm:DWZ}
   Suppose that $\E$ is a twist over a \LCH, \etale\ groupoid~$G$, and that~$S$ is an open and normal subgroupoid of~$G$. The following statements  are equivalent.
  \begin{enumerate}[label=\textup{(\roman*)}]
    
    \item\label{it:thm:DWZ:B} $\csr(S;\E_{S})$ is a Cartan subalgebra of $\csr(G;\E)$. 
    
    \item\label{it:thm:DWZ:S} $S$ is closed in~$G$ and satisfies
      \begin{align} 
      G\z \subset \E_{S}
      &=
      \Int[\E]{
      \left\{
      e\in \Iso{\E}: 1=\left|[e,\E_{S}]\right|
      \right\}
      },
      \label{eq:thm:DWZ:maximal}
      \tag{\texttt{max}}
      \\
        \label{eq:thm:DWZ:ricc}
      \emptyset
      &=
      \Int[\E]{
      \left\{
      e\in \Iso{\E}: 1<\left|[e,\E_{S}]\right|=\left|[\pi(e),S\vphantom{\E_{S}}
      ]\right|<\infty
      \right\}
      }     
      \tag{\texttt{ricc}}
        .
    \end{align}
  \end{enumerate}
\end{thm}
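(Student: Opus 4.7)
The plan is to prove the equivalence in both directions, with almost all of the effort going into the masa axiom. For the direction \ref{it:thm:DWZ:S}$\Rightarrow$\ref{it:thm:DWZ:B}, two of the three Cartan axioms are handed to us by results already cited in the introduction: closedness of $S$ yields a faithful conditional expectation $\csr(G;\E)\to\csr(S;\E_{S})$ via restriction of $C_{c}$-functions on $\E$ to $\E_{S}$ \cite[Lemma 3.4]{BEFPR:2021:Intermediate}, and normality of $S$ supplies regularity \cite[Corollary 3.25]{DWZ:2025:Twist}. The remaining burden is to show that $\csr(S;\E_{S})$ is maximal abelian in $\csr(G;\E)$. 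Abelianness follows from the $\subseteq$-direction of \eqref{eq:thm:DWZ:maximal}, which forces every element of $\E_{S}$ to have trivial $\E_{S}$-commutator class, so convolutions on $C_{c}(\E_{S})$ are symmetric modulo the twist.

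For maximality, I would take $a\in\csr(G;\E)$ in the commutant of $\csr(S;\E_{S})$ and argue that the support of $a$ lies in $\E_{S}$. Commutation with $C_{0}(G\z)\subset\csr(S;\E_{S})$ already confines the support to $\Iso{\E}$, and commutation with all of $\csr(S;\E_{S})$ further restricts the possible commutator class sizes at each support point. The $\supseteq$-direction of \eqref{eq:thm:DWZ:maximal} then kills support on trivial-class points outside $\E_{S}$, while \eqref{eq:thm:DWZ:ricc} kills support on finite-but-nontrivial-class points, leaving the support inside $\E_{S}$ as required.

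For the converse \ref{it:thm:DWZ:B}$\Rightarrow$\ref{it:thm:DWZ:S}, closedness of $S$ is extracted from the $\cs$-subalgebra structure via an approximate-unit or local-section density argument (non-closed $S$ cannot carry the needed norm-closed space of sections). The geometric conditions \eqref{eq:thm:DWZ:maximal} and \eqref{eq:thm:DWZ:ricc} are proved contrapositively: a failure of \eqref{eq:thm:DWZ:maximal} produces a local normalizer supported on an open set of trivial-class non-$\E_{S}$ points, which lies in the commutant of $\csr(S;\E_{S})$ but not in $\csr(S;\E_{S})$ itself, contradicting masa-ness; a failure of \eqref{eq:thm:DWZ:ricc} yields an analogous witness by summing local sections over the finite commutator class.

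The main obstacle is handling \eqref{eq:thm:DWZ:ricc}. Unlike \eqref{eq:thm:DWZ:maximal}, which directly pairs normalizers with trivial class to elements of $\csr(S;\E_{S})$, the ricc condition governs a subtler averaging phenomenon: the equal-cardinality requirement on the $\E_{S}$-class and the $S$-class is precisely what guarantees that the twist does not split the class further, which is what makes an averaged sum well-defined and produces an element of the commutant outside $\csr(S;\E_{S})$. Formulating this averaging argument precisely, and correctly threading the interior hypothesis through it in both directions of the equivalence, is where I expect the proof to be most delicate.
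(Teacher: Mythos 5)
First, a point of order: the paper does not prove Theorem~\ref{thm:DWZ} at all --- it is imported verbatim as \cite[Corollary 3.23]{DWZ:2025:Twist} --- so there is no in-paper proof to compare yours against. What the paper does confirm is the frame you set up for \ref{it:thm:DWZ:S}$\Rightarrow$\ref{it:thm:DWZ:B}: the conditional expectation comes from closedness via \cite[Lemma 3.4]{BEFPR:2021:Intermediate}, regularity from normality via \cite[Corollary 3.25]{DWZ:2025:Twist}, and abelianness of $\E_{S}$ from \eqref{eq:thm:DWZ:maximal} via \cite[Corollary 3.17]{DWZ:2025:Twist}. Your reading of \eqref{eq:thm:DWZ:ricc} is also the right one: when $1<\left|[e,\E_{S}]\right|=\left|[\pi(e),S]\right|<\infty$, the map $\pi$ is injective on $[e,\E_{S}]$, so the conjugation orbit of $e$ under $\E_{S}$ is finite and unobstructed by the $\mathbb{T}$-action, and an open set of such points lets one average to produce a conjugation-invariant element of the commutant outside $\csr(S;\E_{S})$; the same dichotomy ($\pi$ injective on $[e,\E_{S}]$ or not) is what drives Lemma~\ref{lem:not emptyset with Ad => not principal} in the $\cs$-diagonal refinement later in the paper.

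That said, as a proof the proposal has genuine gaps rather than merely compressed steps. (1) In the masa direction you must work with $j(a)$ for $a$ in the commutant, and the pointwise constraints you derive --- conjugation-invariance, $\mathbb{T}$-equivariance forcing vanishing wherever $\pi$ fails to be injective on $[e,\E_{S}]$, and $\ell^{2}$-summability forcing vanishing wherever the class is infinite --- hold at every point, whereas \eqref{eq:thm:DWZ:maximal} and \eqref{eq:thm:DWZ:ricc} are statements about \emph{interiors}. Passing from ``the open support of $j(a)$ meets the bad set'' to ``the bad set has nonempty interior'' requires continuity of $j(a)$ together with an argument that the class-size conditions are themselves suitably open; you flag this as the delicate point but do not supply it, and it is exactly where the theorem would fail if the interiors were dropped from the statement. (2) The converse's extraction of closedness of $S$ from the Cartan hypothesis is not an argument as written: ``non-closed $S$ cannot carry the needed norm-closed space of sections'' begs the question, since $\csr(S;\E_{S})$ is defined as a completion regardless of whether $S$ is closed. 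One actually has to produce, from a point of $\overline{S}\setminus S$, either a violation of maximality or the failure of the (necessarily unique) conditional expectation to be implemented by restriction. Both gaps are fillable --- they are filled in \cite{DWZ:2025:Twist} --- but they constitute the substance of the proof rather than its bookkeeping.
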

Note that~\eqref{eq:thm:DWZ:maximal} says that $\E_{S}$ is maximal among open abelian subgroupoids of $\E$, and that this condition is not sufficient to force maximality of the subalgebra. Rather, one also needs to assume~\eqref{eq:thm:DWZ:ricc}; see \cite[Remark 3.6]{DWZ:Twist} for the idea behind this condition.
In light of the ``traditional'' Cartan subalgebra  \'a la Renault, i.e., $C_0(G\z)$ for an effective groupoid~$G$, we will refer to a Cartan subalgebra of the form $\csr(S;\E_{S})$ as a {\em non-traditional Cartan subalgebra}, and we will often write $\atgext$ for the original input.

As alluded to earlier, Renault and Kumjian showed a surprising converse: From any Cartan pair $B\subset A$ of  $\cs$-algebras, one can construct an effective \etale\ groupoid $W\coloneqq  W_{B\subset A}$ with twist $\Sigma\coloneqq  \Sigma_{B\subset A}$ (called {\em the Weyl groupoid} and {\em the Weyl twist}, respectively) such that there exists a Cartan-preserving isomorphism $A\cong \csr( W ; \Sigma)$, and $B$ is a $\cs$-diagonal if and only if $W$ is principal. The purpose of this paper, then, is two-fold:
\begin{enumerate}
    \item We identify the Weyl pair associated to a non-traditional Cartan subalgebra as in Theorem~\ref{thm:DWZ} entirely in terms of the original data $\atgext$ (Theorem~\ref{thm:A}); this generalizes the main results in \cite{DGN:Weyl}.
    \item We refine the equivalent conditions of \cite[Theorem 3.3]{DWZ:2025:Twist} to $\cs$-diagonals (Theorem~\ref{thm:B}). To the author's knowledge, this has not been attempted before.
\end{enumerate}

The paper is structured as follows. We recall necessary definitions and facts in Section~\ref{sec:prelim}: we first review twisted groupoid C*-algebras in Subsection~\ref{ssec:twisted algebras} and Cartan subalgebras and the associated Weyl twist and groupoid in Subsection~\ref{ssec:Cartan}. In Subsection~\ref{ssec:actions}, we remind the reader that the input $\atgext$ implies that the groupoid $\E$ acts continuously on the $\E$-twisted Pontryagin dual $\twPD$ of~$S$ (Definition~\ref{def:twPD}).

In Section~\ref{sec:Weyl}, where we restrict ourselves to \etale\ groupoids, we give an explicit homeomorphism 
between the twisted dual $\twPD$ of~$S$ and the Gelfand dual of the commutative $\cs$-algebra $\csr(S;\E_{S})$ (Proposition~\ref{prop:spec B}). We can then state and prove our first theorems: assuming that~$G$ is \etale\ and that $\csr(S; \E_{S} )$ is a  Cartan subalgebra of $\csr(G;\E)$ (i.e, $S$ is as in Theorem~\ref{thm:DWZ}\ref{it:thm:DWZ:S}), we give the following explicit description of the Weyl twist $\Sigma_{B\subset A}$ and the Weyl groupoid $W_{B\subset A}$ of this non-traditional Cartan pair:
\begin{thmx}\label{thm:A}

    Suppose that $\E$ is a twist over a \LCH, \etale\ groupoid~$G$, and that~$S$ is an open and normal subgroupoid of~$G$ such that
    $\csr(S; \E_{S} )$ is a Cartan subalgebra of $\csr(G;\E)$. Then the associated Weyl groupoid and twist are canonically isomorphic to the action groupoid
    $\Weylgpd $ respectively to the quotient of $\actiongpd{\twPD}{\E}$ by the relation:
    \quad
    $(e,\kappa)\sim (f,\kappa) \;:\Leftrightarrow\; f\inv e\in \E_{S}$ and $\kappa(f\inv e)=1$.
\end{thmx}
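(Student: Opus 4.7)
My approach is to unpack the Kumjian--Renault definition of the Weyl pair for $B\coloneqq \csr(S;\E_{S})\subseteq A\coloneqq \csr(G;\E)$ and transport it across the identification $\hat B\cong \twPD$ of Proposition~\ref{prop:spec B}. Recall that $\Sigma_{B\subset A}$ is a quotient of $\{(n,\kappa)\in N_{A}(B)\times\hat B : (n^{*}n)(\kappa)\neq 0\}$ by the relation $(n,\kappa)\sim(n',\kappa)$ whenever there exist $b,b'\in B$ with $b(\kappa)=b'(\kappa)=1$ and $nb=n'b'$ on a neighborhood of $\kappa$, and $W_{B\subset A}$ is the further quotient by the residual $\mathbb T$-action. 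The task is to translate both the set of normalizers and this equivalence into $\E$-theoretic data.

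\textbf{Step 1 (Local normalizers from sections of $\E$).} I first associate to each $e\in\E$ a canonical family of normalizers. Pick an open bisection $U\ni \pi(e)$ of $G$, a continuous section $s\colon U\to\E$ with $s(\pi(e))=e$, and any $\phi\in C_{c}(\E;G)$ with $\phi(e)=1$ whose support is contained in $s(U)$; by \cite[Corollary 3.25]{DWZ:2025:Twist} such $\phi$ lies in $N_{A}(B)$. A direct convolution computation shows that conjugation by $\phi$ implements, on $\hat B=\twPD$, precisely the $\E$-action of $e$ recalled in Subsection~\ref{ssec:actions}. This lets me set
\[
    \Phi\colon \actiongpd{\twPD}{\E}\longrightarrow \Sigma_{B\subset A},
    \qquad (e,\kappa)\longmapsto [(\phi,\kappa)],
\]
after verifying that the Weyl class $[(\phi,\kappa)]$ is independent of the auxiliary choices $U,s,\phi$: two admissible $\phi$'s differ, near $\kappa$, by multiplication by an element of $B$ taking value $1$ at $\kappa$.

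\textbf{Step 2 (Matching the equivalence relations).} The heart of the proof is to show $\Phi(e,\kappa)=\Phi(f,\kappa)$ if and only if $f\inv e\in\E_{S}$ and $\kappa(f\inv e)=1$. For the forward direction, suppose $\phi_{s}$ and $\phi_{t}$ represent $\Phi(e,\kappa)$ and $\Phi(f,\kappa)$ and satisfy $\phi_{s}b=\phi_{t}b'$ near $\kappa$ with $b(\kappa)=b'(\kappa)=1$. Comparing supports in $\E$ forces $\pi(f)\inv \pi(e)\in S$, so $f\inv e\in\E_{S}$, and then evaluating the identity at $e$ extracts the $\mathbb T$-label as $\kappa(f\inv e)=1$. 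Conversely, given $f\inv e\in\E_{S}$ with $\kappa(f\inv e)=1$, the element $f\inv e$ gives rise to $b\in B$ with $b(\kappa)=1$ such that $\phi_{s}=\phi_{t}b$ on a neighborhood of $\kappa$, which is exactly the Weyl-twist identification.

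\textbf{Step 3 (Surjectivity, groupoid structure, passage to the Weyl groupoid).} Surjectivity of $\Phi$ reduces to the fact that every Weyl germ is represented by a normalizer of the form $\phi_{s}$, which follows from the regularity of $B$ in $A$ established in \cite[Corollary 3.25]{DWZ:2025:Twist} together with a partition-of-unity argument on the bisection supports. Continuity in both directions is a matter of comparing the product topology on $\actiongpd{\twPD}{\E}$ with the germ topology on $\Sigma_{B\subset A}$, using the generating opens $s(U)\times V$. Composition in $\actiongpd{\twPD}{\E}$ corresponds to convolution of the associated $\phi$'s, and the central $\mathbb T\subset\E$ acts by scalar multiplication on $\phi$, so $\Phi$ is a morphism of twists. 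Modding out the central $\mathbb T$ identifies $\actiongpd{\twPD}{G}$ with $W_{B\subset A}$; the equivalence in Step~2 then exactly quotients out the kernel $\E_{S}$, descending the isomorphism to the claimed $\Weylgpd\xrightarrow{\sim} W_{B\subset A}$.

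\textbf{Main obstacle.} The algebra is dictated by Kumjian--Renault, so the real work is Step~2: carefully disentangling the Weyl relation $\phi_{s}b=\phi_{t}b'$ into the two separate pieces of data, membership in $\E_{S}$ and triviality of the $\mathbb T$-label $\kappa(f\inv e)$. This is where both Cartan hypotheses of Theorem~\ref{thm:DWZ}\ref{it:thm:DWZ:S} — in particular \eqref{eq:thm:DWZ:maximal} and \eqref{eq:thm:DWZ:ricc} — are needed to rule out unwanted overlaps between the supports of $\phi_{s}$ and $\phi_{t}$ outside of $\E_{S}$.
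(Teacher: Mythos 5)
Your proposal follows essentially the same route as the paper: the paper's map $\tilde{\Delta}$ sends $(e,\kappa)$ to $\llbracket n,\intA{\kappa}\rrbracket$ for any $n\in C_c(G;\E)$ whose open support projects to a bisection and which satisfies $n(e)>0$ (your $\phi$ with $\phi(e)=1$), and your Step~2 is precisely the paper's Proposition~\ref{prop:Weyl explained} characterizing equality of classes in the Weyl twist and groupoid in terms of the data $\atgext$. Two caveats. First, your ``Main obstacle'' paragraph mislocates where the hypotheses enter: conditions \eqref{eq:thm:DWZ:maximal} and \eqref{eq:thm:DWZ:ricc} are not what rules out unwanted support overlaps in Step~2 --- that is handled purely by the bisection-support condition on the normalizers and the closure of that class under products and adjoints; those conditions are needed upstream, to guarantee that $\E_{S}$ is abelian (so that $\twPD$ is even defined) and that $B$ is Cartan, so that the Kumjian--Renault construction applies and every Weyl class is represented by a normalizer of bisection type. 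Second, the assertion that the induced map on the quotient is a homeomorphism requires the openness of the quotient map $\actiongpd{\twPD}{\E}\to(\actiongpd{\twPD}{\E})/\cK$ (the paper's Lemma~\ref{lem:q by cK is open map}) and a genuine subnet argument for openness of $\tilde{\Delta}$ itself; your Step~3 elides this, which is the most delicate part of the paper's proof.
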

The exact statements, including the explicit groupoid isomorphisms, can be found in Theorems~\ref{thm:Weyl groupoid} and~\ref{thm:Weyl twist}. Subsection~\ref{ssec:corollaries:A} contains some corollaries of Theorem~\ref{thm:A}. For example,
if~$G$ is second countable, the theorem   implies that the action of $G/S$ on $\twPD $ is topologically free (Corollary~\ref{cor:effective and top princ}). We further recover some special cases of results found in \cite{IKRSW:2021:Pushout} and \cite{Renault:2023:Ext}, namely that $\csr(G;\E)$ is canonically isomorphic to 
\[
\csr\left( \Weylgpd  ; (\actiongpd{\twPD}{\E})/\sim \right).
\]

In Section~\ref{sec:trivial Weyl twist}, we discuss which conditions on the input $\atgext $  are necessary and sufficient for the Weyl twist $\Sigma_{B\subset A}$ to be trivializable.

Section~\ref{sec:diag} is devoted to refining the equivalent conditions of 
\cite[Theorem 3.3]{DWZ:2025:Twist}
from Cartan subalgebras to $\cs$-diagonals. To be precise, we use our description of the Weyl groupoid to ascertain 
when 
the subalgebra $\csr(S; \E_{S} )$ is a $\cs$-diagonal in $\csr(G;\E)$. A consequence of our main result, Theorem~\ref{thm:diag}, is:
\begin{thmx}\label{thm:B}
    Suppose 
  $\E$ is a twist 
  over a \LCH, \etale\
  groupoid~$G$, and~$S$ is an open and normal subgroupoid of~$G$. Then the following  are equivalent.
  \begin{enumerate}[label=\textup{(\roman*)}]
    
    \item 
    $\csr(S; \E_{S} )$ is a $\cs$-diagonal in $\csr(G;\E)$.
    
    \item
      The restricted twist
      $ \E_{S} $ is abelian and closed in $\E$, and for every $e\in \E(u)\setminus \E_{S}(u)$,  there exists $\sigma\in \E_{S}(u) $ and $z\in\mathbb{T}\setminus\{1\}$ with $e \sigma= z\cdot \sigma e$.
  \end{enumerate}
\end{thmx}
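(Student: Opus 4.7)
The plan is to combine Theorem~\ref{thm:A} (identifying the Weyl groupoid with the action groupoid $\Weylgpd$) with the Kumjian--Renault fact that a Cartan inclusion is a $\cs$-diagonal exactly when its Weyl groupoid is principal. Under these identifications, the $\cs$-diagonal property amounts to \emph{freeness} of the $G/S$-action on $\twPD$: for every $u\in G\z$, $[g]\in(G/S)(u,u)\setminus\{[u]\}$, and $\kappa\in\twPD$ over $u$, one has $\kappa\cdot[g]\neq\kappa$. By Theorem~\ref{thm:A} and Proposition~\ref{prop:spec B}, this action is given by $(\kappa\cdot[g])(\sigma)=\kappa(e\sigma e^{-1})$ for any isotropy lift $e\in\E(u)$ of $g$.

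First I would check that condition~(ii) implies Theorem~\ref{thm:DWZ}\ref{it:thm:DWZ:S}. Closedness of $S$ follows from closedness of $\E_S$. For~\eqref{eq:thm:DWZ:maximal}, abelianness places $\E_S$ inside the isotropy-centralizer of $\E_S$, while the commutator hypothesis forces every $e\in\Iso{\E}\setminus\E_S$ out of this centralizer (since $e\sigma=z\sigma e$ unravels to $[e,\sigma]=z\neq 1$), so the centralizer coincides with the already-open set $\E_S$. For~\eqref{eq:thm:DWZ:ricc}, this same scalar $z\in\mathbb{T}$ projects trivially in $G$, so the map $[e,\E_S]\to[\pi(e),S]$ is non-injective at the unit, forcing $|[e,\E_S]|>|[\pi(e),S]|$ and hence emptiness of the set in~\eqref{eq:thm:DWZ:ricc}.

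For (ii)$\Rightarrow$(i), pick $e\in\E(u)\setminus\E_S(u)$ and witnesses $\sigma,z\neq 1$; since $e\sigma e^{-1}=z\sigma$, every $\kappa\in\twPD$ over $u$ satisfies $(\kappa\cdot[\pi(e)])(\sigma)=\kappa(e\sigma e^{-1})=z\kappa(\sigma)\neq\kappa(\sigma)$, so $[\pi(e)]$ fixes no such $\kappa$. For (i)$\Rightarrow$(ii), I argue by contrapositive: if the commutator condition fails for some $e\in\E(u)\setminus\E_S(u)$, form
\[
N_e:=\langle e\sigma e^{-1}\sigma^{-1}:\sigma\in\E_S(u)\rangle\leq\E_S(u).
\]
Any element of $N_e$ lying in $\mathbb{T}$ arises, using abelianness of $\E_S(u)$, from some $\tau$ with $\pi(e)\pi(\tau)=\pi(\tau)\pi(e)$, whence $e\tau=z_\tau\tau e$; the failure hypothesis forces $z_\tau=1$, so $N_e\cap\mathbb{T}=\{1\}$. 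Since $S(u)=\E_S(u)/\mathbb{T}$ is discrete (as $G$ is \etale), $\mathbb{T}\cdot N_e$ is closed in the locally compact abelian group $\E_S(u)$, and the continuous character $z\cdot n\mapsto z$ on it extends, by Pontryagin duality, to some $\kappa\in\twPD$ over $u$ with $\kappa(N_e)=\{1\}$; this $\kappa$ is fixed by $[\pi(e)]$, contradicting freeness.

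The main obstacle I anticipate is the Pontryagin extension step: one must verify continuity and the correct restriction to $\mathbb{T}$ so that the extended character genuinely lies in $\twPD$ as identified by Proposition~\ref{prop:spec B}, and carefully interpret the algebraic equation $\kappa(e\sigma e^{-1})=\kappa(\sigma)$ as the Weyl-groupoid action fixing $\kappa$ by $[\pi(e)]$ via Theorem~\ref{thm:A}.
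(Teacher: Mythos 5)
Your proposal is correct, and its overall architecture coincides with the paper's: the paper also deduces Theorem~\ref{thm:B} from the chain ``$\cs$-diagonal $\iff$ Cartan pair with principal Weyl groupoid $\iff$ Cartan conditions of Theorem~\ref{thm:DWZ} plus principality of $\Weylgpd$,'' using Lemma~\ref{lem:iso of Q x twPD} to identify isotropy points $(\dot e,\kappa)$ with characters $\kappa$ killing $[e,\E_S]$. Your verification that (ii) implies \eqref{eq:thm:DWZ:maximal} and \eqref{eq:thm:DWZ:ricc}, and your freeness computation for (ii)$\Rightarrow$(i), match the paper's (the $e\inv\sigma e$ versus $e\sigma e\inv$ convention is immaterial). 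Where you genuinely diverge is in the hard direction: to produce a $\kappa\in\twPD(u)$ fixed by $\dot e$ when $\pi$ is injective on $[e,\E_S]$, the paper (Lemma~\ref{lem:not emptyset with Ad => not principal}) first trivializes the fibre twist $\E_S(u)\to S(u)$ via Kleppner's theorem that symmetric $2$-cocycles on discrete abelian groups are coboundaries, and then runs a Zorn's-lemma character extension over the discrete group $S(u)$. You instead observe that $\mathbb{T}\cdot[e,\E_S]$ is a closed subgroup of the locally compact abelian group $\E_S(u)$ (closed because it is $\pi\inv$ of a subgroup of the discrete $S(u)$), that the tautological character $z\cdot n\mapsto z$ on it is well defined (by $[e,\E_S]\cap\mathbb{T}=\{1\}$, which is exactly the failure of (ii) at $e$) and continuous (the $\mathbb{T}$-orbits are open in it), and then invoke the closed-subgroup character extension theorem for LCA groups. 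This sidesteps both Kleppner and the explicit Zorn argument at the cost of citing a stronger black box, and it automatically yields $\mathbb{T}$-equivariance of $\kappa$ since the extension restricts to the identity on $\mathbb{T}\cdot u$. Two small points to make explicit when writing this up: in (i)$\Rightarrow$(ii) you should record that abelianness and closedness of $\E_S$ already follow from the Cartan characterization (Theorem~\ref{thm:DWZ} together with \cite[Corollary 3.17]{DWZ:2025:Twist}) before turning to the commutator condition; and you should note that condition (ii) forces $S\z=G\z$ (a unit $u\notin S$ would lie in $\E(u)\setminus\E_S(u)$ with $\E_S(u)=\emptyset$), which is needed for $G\z\subset\E_S$ in \eqref{eq:thm:DWZ:maximal}.
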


Subsection~\ref{ssec:corollaries:B} contains corollaries and applications of Theorem~\ref{thm:B}. A surprising one is Corollary~\ref{cor:diag:untwisted}: if the original datum is untwisted (i.e., $\E=\mathbb{T}\times G$ is trivial), then the only possible choice for~$S$ is $\Iso{G}$; no other normal subgroupoid can give rise to a $\cs$-diagonal in $\csr(G)$. We end in Section~\ref{sec:questions} with some open problems.

\section{Preliminaries}\label{sec:prelim}

\begin{notationconvention}
    All groupoids are assumed to be topological groupoids that are {\em locally compact and Hausdorff}, so these descriptors will rarely be repeated. (We do not always assume groupoids to be \etale\ or second countable.)
\end{notationconvention}

\begin{defn}\label{def:gpd adjectives}
    For a unit $u\in G\z$ of a groupoid~$G$, we write
$uG\coloneqq r\inv (\{u\})$ and
$Gu \coloneqq s\inv (\{u\})$. The \emph{isotropy group} at $u$ is
denoted $G(u)\coloneqq uG\cap Gu $, and the \emph{isotropy
  bundle} is $\Iso{G}=\bigcup_{u\in G\z}G(u)$. We say that~$G$ is
    \begin{itemize}    
        \item \emph{abelian} if it is a bundle of abelian
groups;         
        \item {\em principal} if $\Iso{G}=G\z$;
        \item {\em topologically principal} if $\{u\in G\z : G(u)=\{u\}\}$ is dense in $G\z$; 
        \item {\em effective} if $\Int[G]{\Iso{G}}=G\z$; 
\item 
\emph{\etale} if the range and source maps are local homeomorphisms, and any $U\subset G$ for which $r|_{U}\colon U\to r(U)$ is a homeomorphism, is called a {\em bisection}.
    \end{itemize}
     
  We say that a subgroupoid~$S$ of~$G$ is \emph{normal} if $S\subset \Iso{G}$ and if $Sg=g S$ for all $g\in G$.
\end{defn}

\begin{remark}
    \begin{enumerate}
        \item 
For (\LCH) \etale\ groupoids, topologically principal implies effective, and the two notions coincide in the case that the groupoid is second countable \cite[Proposition 3.6]{Renault:Cartan}.
    \item It is not difficult to see that~$G$ is principal if and only if it has no non-trivial abelian subgroupoids, and it is effective if and only if it has no non-trivial {\em open} abelian subgroupoids
    \cite[Lemma 3.10]{DWZ:Twist}.
    \end{enumerate}
\end{remark}

\begin{defn}[{cf.\ \cite[Def.\ 2.1]{Wil2019}}]\label{def:gpd action}
	For a \LCH\ space $X$, a groupoid~$G$, and a continuous 
    surjection $\rho\colon X\to G\z$, we say that {\em~$G$ acts from the left on $X$ with momentum map $\rho$} if there exists a continuous map 
    \[
    \mvisiblespace\HleftX\mvisiblespace\colon G\bfp{s}{\rho}X\coloneqq \{(g,x)\in G\times X: s(g)=\rho(x)\}\to X
    \]
    such that
    \begin{enumerate}
        \item $\rho(x)\HleftX x = x$ for all $x\in X$,
        \item $\rho(g\HleftX x)=r(g)$, and
        \item $h\HleftX (g\HleftX x)=(hg)\HleftX x$ whenever one (and hence both) sides make sense.
    \end{enumerate}
    In this case, we call $X$ a {\em left $G$-space}.
    We say that $\HleftX$ is
    \begin{itemize}
        \item {\em free} if $g\HleftX x = x$ implies $g=\rho(x)$;
        \item {\em topologically free} if $\{x\in X : g\HleftX x =x \implies g=\rho(x)\}$ is dense in $X$.
    \end{itemize}
    One can make analogous definitions for right actions.
\end{defn}

\begin{defn}\label{def:action groupoid}
    Suppose~$G$ acts from the left on $X$ as in Definition~\ref{def:gpd action}. We equip the space $G\bfp{s}{\rho} X$ with the structure of a groupoid as follows: we define range and source by $r(g,x)=(r(g),g\HleftX x)$ and $s(g,x)=(s(g),x)$, and we give it the structure maps
    \[
        (h,g\HleftX x)(g,x)
        =
        (hg, x)
        \quad\text{ and }\quad
        (g,x)\inv = (g\inv, g\HleftX x).
    \]
    This groupoid is called the {\em action groupoid}, and we will denote it by $\actiongpd{X}{G}$. We furthermore identify its unit space with $X$ via the homeomorphism $(\rho(x),x)\mapsto x$.
\end{defn}

We give a small example of an action groupoid in Example~\ref{ex:rotation as transformation}.

\begin{remark}\label{rmk:free=principal etc}
    It is easy to check that an action is (topologically) free as defined in Definition~\ref{def:gpd action} if and only if the action groupoid is (topologically) principal as defined in Definition~\ref{def:gpd adjectives}.
\end{remark}

\subsection{Twisted groupoid $\cs$-algebras}\label{ssec:twisted algebras}

A {\em groupoid extension of $G$ by $\E$} is a sequence
\begin{equation}
  \label{eq:def twist}
  \begin{tikzcd}
    \mathbb{T}\times G\z\arrow[r,"\iota",hook] &\E \arrow[r,"\pi",two heads]&
    G
  \end{tikzcd}
\end{equation}
where $G$ and $\E$ are (topological) groupoids, and where the surjective $\pi$ and the injective $\iota$ are continuous and open groupoid homomorphisms such that 
\begin{enumerate}
    \item $\iota(\pi(r(e)),1)=r(e)$ for all $e\in \E$,
    \item $\pi$ induces a homeomorphism of the unit space of $\E$ with $G\z$,  and
    \item $\iota$ is a homeomorphism onto $\pi\inv(G\z)$.
\end{enumerate}
We identify the unit space of $\E$ with $G\z$, and we usually denote it by $\cU$. 
The extension is called {\em central} if
\begin{enumerate}[resume]
  \item
  $z\cdot e \coloneq \iota\bigl(z,r(e)\bigr)e=e\iota\bigl (z,s(e) \bigr)$ for
    all $e\in \E$ and $z\in \mathbb{T}$.
\end{enumerate}
As is standard in the $\cs$-algebra literature, we will usually just say that ``{\em $\E$ is a twist  over $G$}'' instead of ``{\em 
\eqref{eq:def twist} is a central groupoid extension}'',
often without explicitly mentioning the maps $\iota$ and $\pi$. As explained in \cite[p.\ 5]{CDGaHV:2024:Nuclear}, it follows that $\E$ is locally trivial (in fact, a principal $\mathbb{T}$-bundle) and that $\pi$ is proper. 

In this paper, we will only be concerned with the reduced twisted groupoid $\cs$-algebra $\csr(G;\E)$ and only in the case  that~$G$ is \etale. For convenience, we remind the reader of its definition:  We define
\begin{equation*}
  \label{eq:1}
  C_{0}(G;\E)=\{f\in C_{0}(\E):\text{$f(z\cdot e)=zf(e)$ for all
      $z\in\mathbb{T}$ and $e\in \E$}\},
\end{equation*}
and $C_{c}(G;\E)=\{f\in C_{0}(G;\E):f\in C_{c}(\E)\}$. We make $C_{c}(G;\E)$ into a $*$-algebra by giving it the convolution product and involution given by
\begin{equation*}
  f_{1}*f_{2}(e') = \sum_{\{\pi(e)\in G:r(e)=r(e')\}} f_{1}(e)f_{2}(e\inv e')
    \quad\text{ and }\quad
  f^{*}(e)=\overline{f(e\inv )}
  ,
\end{equation*}
respectively.
(The sum is to be read as being over the countable set
  $G^{r(e')}$ rather than the uncountable set~$\E^{r(e')}$, and that the formula is well-defined because both $f_{1}$ and $f_{2}$ are $\mathbb{T}$-equivariant and because $\pi(e)=\pi(e_0)$ implies $e_{0}=z\cdot e$ for a unique $z\in\mathbb{T}$.)

For each $u\in  \cU$, let $\ell^{2}(Gu;\E u)$ denote those $\mathbb{T}$-equivariant functions $\xi\colon \E \to \mathbb{C}$ for which
\begin{equation*}
\sum_{\{\pi(e)\in G:s(e)=u\}}|\xi(e)|^{2}<\infty.
\end{equation*}
We define a representation $\theta^{G,\E}_u$ of $C_c(G;\E)$ on $\ell^{2}(Gu;\E u)$ by letting
    \begin{equation*}
      \bigl[\theta^{G,\E}_{u}(f)(\xi)\bigr](e')=\sum_{\{\pi(e)\in G: r(e)=r(e')\}} f(e)\xi(e\inv e')
      \quad\text{for $\xi\in \ell^{2}(Gu;\E u)$ and $e'\in \E u$},
    \end{equation*}
and we let
\[
    \norm{f}_\red \coloneqq \sup_{u\in \cU} \norm{\theta^{G,\E}_u(f)}.
\]
The completion of $C_c( G ;\E)$ in $\norm{\mvisiblespace}_\red$ is, by definition, the \emph{reduced twisted groupoid $\cs$-algebra $\csr ( G ;\E)$}.

\begin{example}
    Of course, the trivial twist $\E=\mathbb{T}\times G$ is a valid choice. In this case, the twisted $\cs$-algebra $\csr(G;\mathbb{T}\times G)$ is canonically isomorphic to the untwisted $\cs$-algebra $\csr(G)$. Note that any {\em trivializable twist}, i.e., one that is isomorphic (as twists) to the trivial twist, also yields a $\cs$-algebra isomorphic to $\csr(G)$.
\end{example}

We remind the reader of a typical source for twists that is more general than the above but still tractable:
\begin{example}
    Any (continuous, $\mathbb{T}$-valued) normalized $2$-cocycle $\cocycle $ of a groupoid~$G$ gives rise to a twist  over~$G$ which we will denote by 
$\mathbb{T}\times_{\cocycle }G$. To be more precise, $\cocycle \colon G\comp\to\mathbb{T}$ is a continuous map that satisfies
\[
    \cocycle (g,h)\cocycle (gh,k)=\cocycle (g,hk)\cocycle (h,k)
    \quad\text{ and }\quad
    \cocycle (g,s(g))=1=\cocycle (r(g),g)
\]
for all $(g,h,k)\in G^{(3)}$. As a topological space, $\mathbb{T}\times_{\cocycle }G$ is just $\mathbb{T}\times G$; since here, we will mainly care about the case where $G=\mathbb{Z}^2$, we will  write elements of $\mathbb{T}\times_{\cocycle }G$ as $(z;g)$ in an effort to avoid double brackets. We equip $\mathbb{T}\times_{\cocycle }G$ with the multiplication and inversion given by 
\[
(z;g)(w;h)=(zw\,\cocycle_{\theta}(g,h);gh)
    \quad\text{ and }\quad
    (z;g)\inv = \left(\overline{z \,\cocycle (g,g\inv)};g\inv\right),
\]
respectively.
To realize $\mathbb{T}\times_{\cocycle }G$ as a twist over~$G$, one defines $\iota(z,u)=(z;u)$ and $\pi(z;g)=g$ for $z\in \mathbb{T}$, $u\in \cU$, $g\in G$. 
\end{example}

It is well known that a continuous section of a twist $\pi\colon\E\to G$ realizes the twist as isomorphic to that coming from a $2$-cocycle (see \cite[Fact 4.1]{Kum:Diags}, \cite[Proposition
    I.1.14]{Renault:gpd-approach}, \cite[Remark 11.1.6]{Sims:gpds}). Moreover, the formula for that $2$-cocycle reveals: if the section is homomorphic, then the $2$-cocycle is constant $1$, i.e., the twist $\E$ is isomorphic to the trivial twist. Let us record this observation here in more detail for ease of reference:

\begin{lemma}\label{lem:c_s}
    Suppose there exists a continuous (set-theoretic) section  $\mfs \colon G\to  \E $ of  $\pi$ which satisfies $\mfs (u)=u$ for all $u\in \cU$.  For $ e \in  \E$, let $z_{ e }\in\mathbb{T}$ be the unique element such that $z_{ e }\cdot \mfs (\pi( e ))= e $. 

    \begin{enumerate}[label=\textup{(\arabic*)}]
        \item\label{it:c_s is 2-cocycle} The map \(
    \cocycle_{\mfs }\colon G^{(2)}\to\mathbb{T}
    \) determined by the equality
    \[
    \iota \bigl( \cocycle_{\mfs }(g,h),u\bigr)
    =
    \mfs (g)\mfs (h)\mfs (g h)\inv
    \text{ for }u=r(g)
    \]
    is a continuous, normalized $2$-cocycle and satisfies
    \begin{equation}\label{eq:c vs z_sigma}
        \cocycle_{\mfs }\bigl(\pi( e ),\pi( f )\bigr) = \overline{z_{ e }} \overline{z_{ f }} z_{ e  f }.
    \end{equation}
    \item\label{it:psi is iso} If $\mathbb{T}\times_{\mfs }G$ denotes the twist over~$G$ induced by $\cocycle_{\mfs }$, then map $\psi\colon  \E\to \mathbb{T}\times_{\mfs }G$ given by
    \[
    \psi( e )= \bigl(z_{ e },\pi( e )\bigr)\]
    is an isomorphism of topological groupoids with inverse $(z,t) \mapsto z \cdot \mfs (t)$. 
    \end{enumerate} 
    In particular, $\E$ is abelian if and only if $\cocycle_{\mfs }$ is symmetric. Moreover, a twist is trivializable if and only if $\pi$ admits a continuous section that is homomorphic.
\end{lemma}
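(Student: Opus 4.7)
The plan is to deduce both parts from the defining relation $e=z_{e}\cdot\mfs(\pi(e))$, freely using centrality of the extension to commute $\mathbb{T}$-scalars past elements of~$\E$. The product $\mfs(g)\mfs(h)\mfs(gh)\inv$ has image $r(g)$ under~$\pi$, hence lies in $\pi\inv(\{r(g)\})=\iota(\mathbb{T}\times\{r(g)\})$, and so it uniquely determines $\cocycle_{\mfs}(g,h)\in\mathbb{T}$; continuity of $\cocycle_{\mfs}$ then follows from continuity of $\mfs$ and of the structure maps on~$\E$, together with $\iota$ being a homeomorphism onto its image.

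For~\ref{it:c_s is 2-cocycle}, the cocycle identity falls out of comparing the two legal parenthesizations of $\mfs(g)\mfs(h)\mfs(k)$ and reading off the $\mathbb{T}$-scalars. Normalization $\cocycle_{\mfs}(g,s(g))=1=\cocycle_{\mfs}(r(g),g)$ is immediate from $\mfs$ fixing units. For the displayed identity~\eqref{eq:c vs z_sigma}, I would expand
\[
ef = z_{e}z_{f}\cdot\mfs(\pi(e))\mfs(\pi(f)) = z_{e}z_{f}\,\cocycle_{\mfs}\bigl(\pi(e),\pi(f)\bigr)\cdot\mfs(\pi(ef))
\]
and match to $ef=z_{ef}\cdot\mfs(\pi(ef))$, invoking uniqueness of~$z_{ef}$.

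For~\ref{it:psi is iso}, the map $(z,t)\mapsto z\cdot\mfs(t)$ is a set-theoretic inverse to~$\psi$ by the very definition of~$z_{e}$, and it is manifestly continuous. For continuity of~$\psi$ itself, I would observe that $e\cdot\mfs(\pi(e))\inv=\iota(z_{e},r(e))$, so $z_{e}$ is a continuous function of~$e$ via $\iota\inv$ on its image. That $\psi$ is a groupoid homomorphism is exactly~\eqref{eq:c vs z_sigma} rewritten inside $\mathbb{T}\times_{\mfs}G$, and compatibility with $\iota$ and~$\pi$ is automatic from $\mfs$ fixing units.

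For the ``in particular'' claims: the equivalence ``$\E$ abelian iff $\cocycle_{\mfs}$ symmetric'' is read off~\eqref{eq:c vs z_sigma} applied to a composable pair $(e,f)$ and to $(f,e)$, since then $ef=fe$ is equivalent to $z_{ef}=z_{fe}$, which is equivalent to $\cocycle_{\mfs}(\pi(e),\pi(f))=\cocycle_{\mfs}(\pi(f),\pi(e))$. For trivializability, if $\mfs$ is homomorphic then $\cocycle_{\mfs}\equiv 1$, so $\mathbb{T}\times_{\mfs}G=\mathbb{T}\times G$ and~$\psi$ exhibits the required trivialization; conversely, any twist isomorphism $\phi\colon\E\to\mathbb{T}\times G$ yields the continuous homomorphic section $g\mapsto\phi\inv(1,g)$ of~$\pi$. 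I do not anticipate a serious obstacle; the only real care is to apply centrality consistently when moving $\mathbb{T}$-scalars through products in~$\E$, and to leverage the homeomorphism property of~$\iota$ onto its image to keep continuity of $z_{(\cdot)}$ and $\cocycle_{\mfs}$ manifest.
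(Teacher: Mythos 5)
The paper does not actually prove this lemma: it is recorded as a well-known observation, with the proof deferred to the cited references (Kumjian's Fact~4.1, Renault's Proposition~I.1.14, Sims's Remark~11.1.6). Your write-up is the standard verification that those references contain, and it is correct. The key points are all in place: $\pi\bigl(\mfs(g)\mfs(h)\mfs(gh)\inv\bigr)=r(g)$ puts the product in $\iota(\mathbb{T}\times\{r(g)\})$, so $\cocycle_{\mfs}$ is well defined and continuous via $\iota\inv$; associativity of $\mfs(g)\mfs(h)\mfs(k)$ gives the cocycle identity; $\mfs(u)=u$ gives normalization; expanding $ef=z_{e}z_{f}\cdot\mfs(\pi(e))\mfs(\pi(f))$ and invoking uniqueness of $z_{ef}$ gives~\eqref{eq:c vs z_sigma}; and continuity of $e\mapsto z_{e}$ via $\iota\inv\bigl(e\,\mfs(\pi(e))\inv\bigr)$ makes $\psi$ a homeomorphism, with the homomorphism property being~\eqref{eq:c vs z_sigma} restated. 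The trivializability argument in both directions is also fine. One small caveat, which is really a gloss in the statement itself rather than in your proof: your assertion that ``$ef=fe$ is equivalent to $z_{ef}=z_{fe}$'' uses that $\mfs(\pi(ef))=\mfs(\pi(fe))$, i.e.\ that $G$ is already a bundle of abelian groups; without that, a symmetric (e.g.\ trivial) cocycle over a nonabelian group bundle gives a nonabelian $\E$. In the paper's applications (to $\E_{S}(u)\to S(u)$ with $S(u)$ abelian) this hypothesis is always in force, so nothing breaks, but it is worth stating explicitly if you keep the ``abelian iff symmetric'' equivalence.
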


\begin{example}\label{ex:coboundary}
    If a $2$-cocycle $\cocycle$ is a coboundary, meaning that $\cocycle(g,h)=\coboundary(g)\coboundary(h)\coboundary(gh)\inv$ for a continuous map $\coboundary\colon G\to \mathbb{T}$, then one can easily check that the continuous section $\mfs\colon G\to \mathbb{T}\times_{\cocycle}G$ given by $\mfs(g)=(\overline{\coboundary(g)};g)$ is a homomorphism, so that the twist $\mathbb{T}\times_{\cocycle}G$ is trivializable.
\end{example}

Not every twist is induced by a continuous $2$-cocycle, i.e., admits a continuous section as in the above lemma (homomorphic or not); see for example \cite[Theorem A]{ANSZ:2025:twist}. However, if we restrict our attention to the twist $\E(u)\to G(u)$ for any fixed $u\in\cU$ and if $G$ is \etale, then continuity is automatic; in other words, each {\em fibre} of the twist $\E$ is isomorphic to a twist induced by a $2$-cocycle. 

\bigskip

Throughout, we will return to the following, well-studied examples to showcase our results. Both provide models for the rotation algebra, that is, the algebra universally generated by two almost-commuting unitaries:  for some fixed $\theta\in\mathbb{R}$, we let
\[
    A_{\theta}
    \coloneqq  
    \cs 
    \left(U,V : U, V\text{ unitaries such that } VU = \lambda UV
    \right),
        \quad\text{ where }\quad 
        \lambda\coloneqq  \mathsf{e}^{2\pi i \theta}.
\]
The first model is given by a twist on $\mathbb{Z}^2$ induced by a $2$-cocycle, and the second is an action groupoid.
\begin{example}[Rotation algebra as twisted group $\cs$-algebra]\label{ex:c_theta:setup}
    For a fixed $\theta\in\mathbb{R}$, we define a $2$-cocycle on $G=\mathbb{Z}^{2}$ by
    \begin{equation}\label{eq:c_theta}
        \cocycle_{\theta}(\vec{g},\vec{h})=\lambda^{g_{2}h_{1}}
        \quad\text{ where }\quad 
        \lambda\coloneqq  \mathsf{e}^{2\pi i \theta}
        .
    \end{equation}
    The resulting twisted group $\cs$-algebra $\csr(G,\cocycle_{\theta})=\csr(G;\mathbb{T}\times_{\cocycle_{\theta}}G)$ is canonically isomorphic to the rotation algebra. Indeed, the functions $u,v\colon \mathbb{Z} \to \mathbb{C}$ given by
    \begin{align}
        u(\vec{g})=\delta_{g_{1},1}\delta_{g_{2},0}
        \quad\text{ and }\quad
        v(\vec{g}) = \delta_{g_{1},0}\delta_{g_{2},1}
        \quad\text{ satisfy the relation }vu = \lambda uv,
    \end{align}
    and the map $U\mapsto u,V\mapsto v$ extends to a $*$-isomorphism $A_{\theta}\cong \csr(G;\mathbb{T}\times_{\cocycle_{\theta}}G)$.
    
\end{example}

\begin{example}[Rotation algebra as action groupoid $\cs$-algebra]\label{ex:rotation as transformation}
    We let $G=\mathbb{Z}$ act on $X=\mathbb{T}$ by $n\HleftX z = \lambda^n z$, where as before $\lambda=\mathsf{e}^{2\pi i\theta}$ for a fixed $\theta\in\mathbb{R}$. We denote the corresponding action groupoid by
    $\actiongpd[\theta]{\mathbb{T}}{\mathbb{Z}}$. Its (untwisted) $\cs$-algebra $\csr(\actiongpd[\theta]{\mathbb{T}}{\mathbb{Z}})$ is canonically isomorphic to the rotation algebra: the functions $u,v\colon \actiongpd[\theta]{\mathbb{T}}{\mathbb{Z}} \to \mathbb{C}$ given by
    \begin{align}
        u(n,z)=\delta_{n,0}z
        \quad\text{ and }\quad
        v(n,z) = \delta_{n,1}
        \quad\text{ satisfy the relation }vu = \lambda uv,
    \end{align}
    and the map $U\mapsto u,V\mapsto v$ extends to a $*$-isomorphism $A_{\theta}\cong \csr(\actiongpd[\theta]{\mathbb{T}}{\mathbb{Z}})$.
\end{example}

We will denote the
\emph{open support} of a function $f$ on $\E$ by $\suppo(f)$, while $\supp(f)$
will denote its closure, and we further write
\[
    \suppoG(f)\coloneqq  \pi(\suppo(f)).
\]
We use this moment to record a well-known, easily proved result that we will need later on. 
\begin{lemma}\label{lem:unique positive convergent lift}
    Suppose $f\in C_{c}(G; \E  )$ is such that $\suppoG(f)$ is a bisection. Suppose that $u_i\to u$ is a convergent net in  $s(\suppo(f))$. Then there exist unique $ e, e_{i}\in \suppo(f)$ with $s( e_i)=u_i$, $s( e)=u$, and $f( e_i),f( e)>0$, and this net satisfies $ e_i\to  e$.
\end{lemma}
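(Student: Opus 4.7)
My plan is to split the conclusion into existence–uniqueness of the positive lifts, and convergence of the resulting net. For existence and uniqueness, I would use that since $\suppoG(f)$ is a bisection, $s$ restricts to a homeomorphism from $\suppoG(f)$ onto its image, so there is a unique $g\in\suppoG(f)$ with $s(g)=u$, and similarly a unique $g_i\in\suppoG(f)$ with $s(g_i)=u_i$. The fibre $\pi\inv(g)\subset \E$ is a single $\mathbb{T}$-orbit, and picking any $e_0\in\pi\inv(g)\cap\suppo(f)$, the $\mathbb{T}$-equivariance $f(z\cdot e_0)=z f(e_0)$ together with $f(e_0)\ne 0$ forces a unique $z\in\mathbb{T}$ with $z f(e_0)>0$; this produces the desired $e\in\suppo(f)$, and uniqueness is immediate since any other candidate over $g$ is of the form $w\cdot e$ with $w\in\mathbb{T}$. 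The same argument yields each $e_i$.

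For convergence, I would exploit that $\pi\colon\E\to G$ is a principal $\mathbb{T}$-bundle and hence locally trivial. Choose an open neighbourhood $V\subset\suppoG(f)$ of $g$ over which $\pi$ admits a continuous section $\tau\colon V\to \E$. Then $f\circ\tau$ is continuous and nonzero at $g$, so after shrinking $V$ I may assume $f(\tau(h))\ne 0$ for all $h\in V$. The map
\[
    \eta\colon V\to \E,\qquad \eta(h)=\frac{\overline{f(\tau(h))}}{|f(\tau(h))|}\cdot\tau(h),
\]
is then a continuous section of $\pi$ on $V$ characterised by $f(\eta(h))>0$. By the uniqueness proved in the previous paragraph, $\eta(g)=e$ and $\eta(g_i)=e_i$ whenever $g_i\in V$.

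Finally, since $s|_{\suppoG(f)}$ is a homeomorphism onto its image and $u_i\to u$ in that image, $g_i\to g$ in $\suppoG(f)$. Thus $g_i\in V$ eventually, and continuity of $\eta$ yields $e_i=\eta(g_i)\to\eta(g)=e$. No serious obstacle is anticipated; the only step that must be invoked with care is the local triviality of the principal $\mathbb{T}$-bundle $\pi\colon\E\to G$, which is guaranteed by the twist structure and allows the continuous positive-lift $\eta$ to be built over a neighbourhood of $g$.
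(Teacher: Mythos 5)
The paper records this lemma without proof (it is flagged as ``well-known, easily proved''), so there is no in-paper argument to compare against; judged on its own, your proof is correct and complete. The existence/uniqueness step is exactly right: $s$-injectivity on the bisection $\suppoG(f)$ pins down a unique $g\in\suppoG(f)$ over each unit, the fibre $\pi\inv(g)$ is a single $\mathbb{T}$-orbit contained in $\suppo(f)$, and $\mathbb{T}$-equivariance of $f$ singles out the unique positive representative. (Note that you tacitly upgrade the paper's stated definition of bisection, which only mentions $r|_U$, to the standard one where $s|_U$ is also a homeomorphism onto its image; this is forced by the uniqueness assertion in the lemma itself and by how $\suppoG(n)$ is used elsewhere in the paper, so it is the intended reading.) For the convergence step, your route through local triviality of the principal $\mathbb{T}$-bundle --- which the paper explicitly records on p.~5 of the preliminaries --- produces a continuous local section $\eta$ characterised by $f\circ\eta>0$, and the tail of the net lands in its domain because $s|_{\suppoG(f)}$ is a homeomorphism onto an open set; this gives convergence of the full net, not just a subnet. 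An alternative, slightly more elementary argument (the one the paper itself deploys in the proof of Theorem~\ref{thm:Weyl twist}) avoids local triviality: lift $\pi(e_i)\to\pi(e)$ to a convergent net $f_i\to e$ in $\E$ using openness of $\pi$ and Fell's criterion, write $e_i=w_i\cdot f_i$, extract a convergent subnet of $(w_i)_i$ by compactness of $\mathbb{T}$, and use positivity of $f(e_i)$ and $f(e)$ to force the limit of the $w_i$ to be $1$; that version needs a subnet argument, whereas yours does not, so your approach is if anything cleaner.
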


\subsection{Cartan pairs and Weyl pairs}\label{ssec:Cartan}
Next, let us go into more detail regarding Kumjian--Renault theory.
\begin{defn}    [{\cite[Definitions 2.7 and 2.11]{pitts2022normalizers}}] A subalgebra $B$ of a $\cs$-algebra $A$ is called a {\em Cartan subalgebra} if 
\begin{enumerate}
    \item $B$ is maximal abelian in $A$;
    \item there exists a faithful conditional expectation $A\to B$; and 
    \item $B$ is {\em regular} in the sense that the set
    \[
N(B) \coloneqq   \{m\in A: m^* B m, mBm^*\subset B\}
\] of normalizers of $B$ in $A$, generates $A$ as a $\cs$-algebra.
\end{enumerate}
A subalgebra $B$ is called a {\em $\cs$-diagonal} of $A$ if it is a Cartan subalgebra such that
\begin{enumerate}[resume]
    \item every pure state of $B$ uniquely extends to a state of $A$.
\end{enumerate}
\end{defn}
The original definitions for Cartan subalgebras and $\cs$-diagonals can be found in \cite[Definition 5.1]{Renault:Cartan} and \cite[Definition 1.3$^\circ$]{Kum:Diags}, respectively; the definitions stated above are equivalent by \cite{pitts2022normalizers}, Theorem 2.5 and Proposition 2.9, respectively.

As mentioned in the introduction, every twist $\E$ over an effective groupoid~$G$ gives rise to a Cartan pair $C_0(G\z)\subset \csr(G;\E)$ \cite[Theorem 5.2]{Renault:Cartan}, and conversely, every Cartan pair $B\subset A$ gives rise to an effective groupoid with twist in such a way that there exists a Cartan-preserving isomorphism from $A$ to the reduced twisted groupoid $\cs$-algebra \cite[Theorem 5.9]{Renault:Cartan}.\footnote{Renault focused on the situation in which $G$ is second countable, i.e., $A$ is separable. This result has 
since been extended to the non-separable / non-second countable case; see \cite[Theorem 1.2]{Raad:2022:Renault} and \cite[Corollary 7.6]{KM:2020:NCCartans}.} Moreover, the groupoid~$G$ being principal rather than just effective corresponds to $B$ being a $\cs$-diagonal rather than just a Cartan subalgebra \cite[Theorems 2.9$^{\circ}$ and 3.1$^{\circ}$]{Kum:Diags}. As we will be interested in identifying the Weyl twist and groupoid associated to a non-traditional Cartan pair, we will briefly recall how these groupoids can be constructed from the inclusion $B\subset A$.

For a normalizer $n\in N(B)$
of $B$ in $A$,
we let
\[
\dom(n)\coloneqq  \{x\in \widehat{B}: x(n^*n)\neq 0\}.
\]
Here, we have made use of the fact that $B$ contains an approximate identity for $A$ \cite[Corollary 2.6]{pitts2022normalizers}, so that $n^*n\in B$ and $x(n^*n)$ makes sense.
We let
$\alpha_{n}\colon\dom(n)\to\dom(n^*)$ denote the partial homeomorphism of $\widehat{B}$ from \cite{Kum:Diags} that is determined by the equality
\begin{align}\label{eq:def'n of alpha_n}
    x(n^* b n) = \alpha_{n}(x)(b) \, x(n^*n)
    \quad\text{ for all } b\in B \quad\text{ and }\quad x\in\dom(n).
\end{align}
On the set
\begin{equation*}\label{eq:E}
    \left\{(n,x)\in N(B)\times \widehat{B}: x\in \dom(n)\right\},
\end{equation*}
we define two equivalence relations:
\begin{align*}
(n,x) \sim (m,x) \text{ if } &\text{there exist }b,b'\in B
\text{ such that } x(b),x(b')\neq 0\text{ and } nb=mb';
\\
(n,x) \approx (m,x) \text{ if } &\text{there exist }b,b'\in B
\text{ such that }x(b),x(b')>0\text{ and } nb=mb'.
\end{align*}
The quotient by $\approx$ is the so-called {\em Weyl twist} $\Sigma_{B\subset A}$, and its elements will be denoted by $\llbracket n,x\rrbracket$. We give it the following groupoid structure:
\[
    \llbracket m,\alpha_n(x)\rrbracket\llbracket n,x\rrbracket=\llbracket mn,x\rrbracket
    \quad\text{and}\quad
    \llbracket n,x\rrbracket\inv = \llbracket n^*,\alpha_n(x)\rrbracket.
\]
The topology is generated by basic open sets of the form
\[
	\{\llbracket zn, x\rrbracket : z \in U, x\in V\cap \dom(n) \}
    \quad\text{ for }
    n \in N(B)
    \text{ and open }
    U \subset\mathbb{T},
    V\subset\widehat{B}
    .
\]
If we instead quotient out by $\sim$, then we arrive at the {\em Weyl groupoid} $W_{B\subset A}$, whose elements we will denote by $[n,x]$.\footnote{\cite[Proposition 2.2]{DGN:Weyl} proves that our definition of ${\sim}$ coincides with the definition in \cite{Renault:Cartan}.} The groupoid structure and the topology of $W_{B\subset A}$ are defined such that the map
\begin{equation}\label{eq:def:Pi}
\Pi\colon \Sigma_{B\subset A}\to W_{B\subset A},
\quad \llbracket n, x\rrbracket\mapsto [n,x],
\end{equation}
is a continuous (and automatically open) homomorphism.  Note that the unit space of $\Sigma_{B\subset A}$ and $W_{B\subset A}$ is given by $\widehat{B}$.
For each $(z,x)\in\mathbb{T}\times \widehat{B}$, fix an arbitrary element $b_{z,x}\in B$ with $x(b_{z,x})=z$. Together with $\Pi$, the map
\begin{equation}\label{eq:def:iota}
\iota\colon \mathbb{T}\times \widehat{B}
\to \Sigma_{B\subset A},
\quad (z,x)\mapsto \llbracket b_{z,x}, x\rrbracket
\end{equation}
realizes $\Sigma_{B\subset A}$ as a twist over $W_{B\subset A}$. More details can be found in \cite{Renault:Cartan}.

\subsection{Actions on the twisted dual of~$S$}\label{ssec:actions}
As mentioned in the introduction, our main results revolve around action groupoids built from actions of $\E$ on $\twPD$, so let us remind the reader of the definition of $\twPD$ and of these actions.
\begin{assumption}\label{as:E,G,S}
   $G$ is a \LCH\ groupoid,  $\pi\colon\E\to G$ is a twist, and~$S$ is a closed 
    and normal subgroupoid of~$G$ with $S\z$ equal to $\cU\coloneqq  G\z$ and for which the groupoid $ \E_{S} \coloneqq  \pi\inv(S)$ is abelian. Lastly, we assume that the range map of~$S$  is open. 
\end{assumption}

\begin{remark}
    \begin{enumerate}
        \item 
    We do not need to assume that~$G$ be second countable or \etale, or to have open range map. That the range map of $S$ be open is needed to ensure that the quotient map $G\to G/S$ is open; see Lemma~\ref{lem:quotients}\ref{item:quotient map open}.
    \item
    In the setting of Assumption~\ref{as:E,G,S}, $ \E_{S} $ is normal and closed in $\E$,~$S$ is abelian, and $(G,\E,S)$ is in particular an {\em abelian twisted extension} in the sense of \cite[Definition 3.4]{Renault:2023:Ext}. 
    \item
    If a groupoid has a Haar system, then its range map is automatically open \cite[Proposition 1.23]{Wil2019}. 
     Thus, an easy way to make sure that~$S$ has open range map is to assume that~$G$ is \etale\ and that~$S$ is open in~$G$ \cite[Proposition 1.29]{Wil2019}. This is the situation we will be most interested in in later sections.
     \item If~$G$ has a Haar system and is second countable, then it was shown in \cite[Lemma 2.1]{IKRSW:2021:Extensions} that the closed subgroup bundle~$S$ has open range map if and only if it also has a Haar system.
    \end{enumerate}
\end{remark}

\begin{defn}[cf.\ {\cite[Definition 3.6]{Renault:2023:Ext}}]\label{def:twPD}
    For $u\in \cU  $, write $\widehat{\E}_{S} (u)$ for the Pontryagin dual of the abelian group $\E_{S}(u)$. Let
    $P\colon \bigsqcup_{u\in\cU } \widehat{\E}_{S} (u) \to \cU  $ be the map that sends $\widehat{\E}_{S} (u)$ to $\{u\}$, and let $\twPD(u)$ denote the subset of $\widehat{\E}_{S} (u)$ that consists of those continuous homomorphisms $\kappa\colon  \E_{S} (u)\to\mathbb{T}$ for which
    $(\kappa\circ \iota)(z,u) = z$.
    The \emph{$\E_{S} $-twisted Pontryagin dual} (or just \emph{twisted dual}) of~$S$ is the set
    \[
        \twPD\coloneqq  
        \bigsqcup_{u\in\cU } \twPD(u)
        =
        \bigsqcup_{u\in\cU }
        \bigl\{
            \kappa\in  \widehat{\E}_{S} (u): \kappa(z\cdot\sigma)=z\kappa(\sigma) \text{ for all }z\in\mathbb{T},\sigma\in \E_{S} (u)
        \bigr\}
    \]
    equipped with the unique topology such that a net $(\kappa_{i})_{i}$ converges to $\kappa$ if and only if the following two conditions hold:
    \begin{enumerate}[label=\textup{(\roman*)}]
        \item\label{item:convergence of units} $u_{i}\coloneqq  P(\kappa_{i})\to u\coloneqq  P(\kappa)$ in $\cU  $; and
        \item\label{item:convergence netwise} if  $(\tau_{i})_{i}$ is a lift of $(u_{i})_{i}$ in $\E_{S}$ under the source map and if $\tau_{i}\to \tau$, then $\kappa_{i}(\tau_{i})\to \kappa(\tau)$.
    \end{enumerate}
\end{defn}

\begin{remark}\label{rmk:twPD for no twist}
    At times, we will be in the situation that $\E_{S}$ is trivializable. In this case, $\twPD$ is canonically homeomorphic to \(
        \widehat{S}\coloneqq   \bigsqcup_{u\in \cU } \widehat{S(u)}
    \)
    with the topology as in \cite[Proposition 3.3]{MRW:1996:CtsTrace}.
    Indeed, if $\mfs\colon S\to \E_{S}$ is a continuous and homomorphic section of $\pi|_{S}\colon \E_{S}\to S$, then the map
    $\mfs^*\colon \twPD\to\widehat{S}$ which sends $\kappa\in\twPD(u)$ to the element
    \[
        \mfs^*(\kappa)\colon\quad
        t\mapsto \kappa(\mfs(t))
    \]
    of $\widehat{S(u)}$, is a homeomorphism.
\end{remark}

For $e\in v \E u $ and $\tau\in  \E_{S} (v)$, normality of~$S$ implies that $e\inv \tau e \in  \E_{S} (u)$, so if  $\kappa\in \twPD(u)$, then $\kappa (e\inv \tau e)$ makes sense. This serves as a first sanity check for the following proposition.

\begin{prop}[{\cite[p.\ 22ff]{IKRSW:2021:Extensions}, \cite[p.\ 259ff]{Renault:2023:Ext}}]\label{prop:E:HleftX is an action}
    The map $\mvisiblespace \HleftX \mvisiblespace\colon \E\bfp{s}{P} \twPD \to \twPD $ given for $e\in  v \E u $, $\tau\in  \E_{S} (v)$, and $\kappa\in \twPD(u)$ by
    \[
        \left( e\HleftX \kappa\right) (\tau)
        \coloneqq  
        \kappa (e\inv \tau e),
    \]
    defines a continuous action of $\E$ on $\twPD$.
\end{prop}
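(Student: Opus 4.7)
The plan is to verify three things in turn: (a) that for each composable pair $(e,\kappa)$, the assignment $\tau \mapsto \kappa(e^{-1}\tau e)$ really lies in $\twPD(r(e))$; (b) that the resulting map satisfies the three axioms of a left groupoid action from Definition~\ref{def:gpd action}; and (c) that it is continuous. None of these should require deep machinery beyond normality of $S$ and the definition of the topology on $\twPD$.

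For (a), fix $e\in v\E u$ and $\kappa\in\twPD(u)$. Normality of $S$ in $G$ (combined with $\E_S=\pi^{-1}(S)$) implies that conjugation by $e$ maps $\E_S(v)$ into $\E_S(u)$: if $\pi(\tau)\in S(v)$ then $\pi(e)^{-1}\pi(\tau)\pi(e)\in S(u)$ by $Sg=gS$, so $e^{-1}\tau e\in \E_S(u)$. This conjugation is a continuous group homomorphism $\E_S(v)\to\E_S(u)$, so composing with $\kappa$ produces a continuous homomorphism $\E_S(v)\to\mathbb{T}$. The required $\mathbb{T}$-equivariance $(e\HleftX\kappa)(z\cdot\tau)=z(e\HleftX\kappa)(\tau)$ follows from centrality of the twist, since $e^{-1}(z\cdot\tau)e = z\cdot(e^{-1}\tau e)$ and $\kappa$ itself is $\mathbb{T}$-equivariant. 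Hence $e\HleftX\kappa\in\twPD(v)$ as required.

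For (b), the three axioms reduce to routine algebraic identities. Unitality: if $u=P(\kappa)$, then $(u\HleftX\kappa)(\tau)=\kappa(u^{-1}\tau u)=\kappa(\tau)$. Compatibility with momentum: $P(e\HleftX\kappa)=v=r(e)$ holds by construction. Associativity: for composable $f,e$ and $\tau\in\E_S(r(f))$,
\[
\bigl(f\HleftX(e\HleftX\kappa)\bigr)(\tau)
= (e\HleftX\kappa)(f^{-1}\tau f)
= \kappa(e^{-1}f^{-1}\tau fe)
= \kappa\bigl((fe)^{-1}\tau(fe)\bigr)
= \bigl((fe)\HleftX\kappa\bigr)(\tau).
\]

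For (c), which is the most substantive step, I would invoke the characterization of convergence in $\twPD$ directly. Suppose $(e_i,\kappa_i)\to(e,\kappa)$ in $\E\bfp{s}{P}\twPD$; I need to verify conditions \ref{item:convergence of units} and \ref{item:convergence netwise} for the net $e_i\HleftX\kappa_i$. Condition \ref{item:convergence of units} reads $P(e_i\HleftX\kappa_i)=r(e_i)\to r(e)=P(e\HleftX\kappa)$, which is immediate from continuity of the range map. For condition \ref{item:convergence netwise}, let $(\tau_i)$ be a lift of $(r(e_i))$ in $\E_S$ under $s$ with $\tau_i\to\tau$; then the net $\sigma_i\coloneqq e_i^{-1}\tau_i e_i$ is well-defined (composability is automatic since $\tau_i$ has source $r(e_i)$ and lies in the isotropy bundle, and $e_i^{-1}\tau_i\in \E$ is composable with $e_i$) and lives in $\E_S$ with $s(\sigma_i)=s(e_i)=P(\kappa_i)$, so $(\sigma_i)$ is a valid lift of $(P(\kappa_i))$. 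Continuity of multiplication and inversion in $\E$ gives $\sigma_i\to e^{-1}\tau e$, and applying condition~\ref{item:convergence netwise} to the hypothesis $\kappa_i\to\kappa$ yields $\kappa_i(\sigma_i)\to\kappa(e^{-1}\tau e)$, i.e.\ $(e_i\HleftX\kappa_i)(\tau_i)\to(e\HleftX\kappa)(\tau)$. The main obstacle, such as it is, lies in ensuring that the net $(\sigma_i)$ satisfies the source-lift hypothesis of Definition~\ref{def:twPD}\ref{item:convergence netwise} with respect to the net $\kappa_i$, which reduces to the bookkeeping above.
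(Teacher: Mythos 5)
Your proposal is correct and follows essentially the same route as the paper's proof: the algebraic verifications are routine (the paper simply calls them "straightforward"), and the continuity argument is identical — check condition (i) of Definition~\ref{def:twPD} via continuity of the range map, and check condition (ii) by feeding the conjugated lift $\sigma_i = e_i^{-1}\tau_i e_i$ of $(P(\kappa_i))_i$, which converges to $e^{-1}\tau e$ by continuity of the groupoid operations, back into the convergence hypothesis $\kappa_i\to\kappa$. Your part (a) is in fact slightly more careful than the paper, which leaves well-definedness unverified.
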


Note that, when restricted to $ \E_{S} $, the action is trivial.

\begin{proof}
	It is straight forward to check that $e\HleftX\kappa$ is an element of $\twPD(v)$ and that $(e,\kappa)\mapsto e\HleftX \kappa$ satisfies the algebraic properties of an action.
   To see that 
    it 
    is  continuous, assume that we have a net $( e_{i},\kappa_{i})_{i}$ that converges to $( e ,\kappa)$ in $\E\bfp{s}{P} \twPD $. 
    Since 
    \[
    P( e_{i}\HleftX \kappa_{i})= r( e_{i})\to 
    r( e )=P( e \HleftX \kappa),\]
    we see that Condition~\ref{item:convergence of units} of  Definition~\ref{def:twPD} holds. For Condition~\ref{item:convergence netwise}, assume that $(\tau_{i})_{i}$ is a lift of $(r(e_{i}))_{i}$ under the source map of $\E_{S}$ and that $\tau_{i}\to \tau$.
     It follows from Condition~\ref{item:convergence netwise} applied to the convergent net $\kappa_{i}\to \kappa$ that 
    $\kappa_{i}(e_{i}\inv \tau_{i} e_{i}) \to \kappa(e\inv\tau e)$, which means exactly that $ e_{i}\HleftX \kappa_{i}\to  e \HleftX \kappa$.
\end{proof}

In light of Theorem~\ref{thm:A}, we see that we will need an action of the quotient $G/S$ on $\twPD$, so let us make sense of this.
We turn $\E$ into a right $ \E_{S} $-space by defining $\E\bfpsr \E_{S} \to \E$ by $(e,\sigma)\mapsto e\sigma$. Elements of the quotient $\E/ \E_{S} $ are then of the form \[\dot{e}\coloneqq   e\E_{S}=\{e\sigma:\sigma\in \E_{S} (s(e))\}=\{\tau e : \tau\in  \E_{S} (r(e))\}\] for $e\in\E$. 
Fully analogously, we define $G/S$. To show that the action of $\E$ on $\twPD$ passes to an action of $G/S $, we first need a lemma that ensures that we will not run into any topological issues.

\begin{lemma}\label{lem:quotients}
    In the setting of Assumption~\ref{as:E,G,S}, the following hold.
\begin{enumerate}[label=\textup{(\arabic*)}]
    \item\label{item:quotient map open} The quotient maps $q_{\E}\colon \E\to\E/ \E_{S} $ and $q_{G}\colon G\to G/S$ are open.
    \item\label{item:Lambda LCH}
    $\E/ \E_{S} $ and $G/S$ are \LCH\ groupoids.
    \item\label{item:Lambda is Q} The map $\pi\colon \E\to G$ descends to an isomorphism $\tilde{\pi}\colon\E/ \E_{S} \cong G/S$ of topological groupoids.
\end{enumerate}
\end{lemma}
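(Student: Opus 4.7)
For~\ref{item:quotient map open}, my plan is to invoke the general principle that for a right action of a topological groupoid $H$ on a space $X$ with momentum map $p$, the orbit map $X\to X/H$ is open if and only if the source map $s\colon H\to H\z$ is open. I would see this by writing the saturation of an open $U\subset X$ as the image of the open set $(U\times H)\cap X\bfp{p}{r}H$ under the action map $(x,h)\mapsto x\cdot h$, factoring this map as the homeomorphism $X\bfp{p}{r}H\to X\bfp{p}{s}H$, $(x,h)\mapsto(x\cdot h,h)$, followed by the projection to $X$, and recalling that projections from fibre products are open precisely when the other leg is. Applied to the right actions of~$S$ on~$G$ and of $\E_S$ on~$\E$, it therefore suffices to check that $s\colon S\to\cU$ and $s\colon\E_S\to\cU$ are open. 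The former follows from Assumption~\ref{as:E,G,S} (openness of $r\colon S\to\cU$) together with the fact that inversion is a self-homeomorphism of~$S$. For the latter I would factor $s_{\E_S}=s_S\circ \pi|_{\E_S}$ and note that $\pi|_{\E_S}\colon\E_S\to S$ is open, since $\pi\inv(S)=\E_S$ gives $\pi|_{\E_S}(V\cap\E_S)=\pi(V)\cap S$ for each open $V\subset\E$, and $\pi$ itself is open by the definition of a twist.

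For~\ref{item:Lambda LCH}, since $q_G$ is now known to be open, Hausdorffness of $G/S$ reduces to showing that the equivalence relation $R\subset G\times G$ is closed. Using $S\subset\Iso{G}$ together with normality, I would rewrite
\[
R=\left\{(g,h)\in G\times G : r(g)=r(h) \text{ and } g\inv h\in S\right\},
\]
which is closed because $G$ is Hausdorff, the groupoid operations are continuous, and $S$ is closed in~$G$. Local compactness of $G/S$ is then automatic: for any compact neighborhood $K\subset G$ of $g$, the image $q_G(K)$ is compact and contains $q_G(\Int[G]{K})$, an open neighborhood of $gS$. The identical argument handles $\E/\E_S$. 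The groupoid operations $(gS)(hS)=ghS$ and $(gS)\inv=g\inv S$ are well-defined by normality and $S\subset\Iso{G}$, and their continuity is a routine consequence of the universal property of open quotients applied to the structure maps of~$G$ (and similarly for~$\E$), using in particular that $q_G\times q_G$ is open.

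For~\ref{item:Lambda is Q}, since $\pi(\E_S)=S$, the homomorphism $\pi$ descends via the universal property of the quotient to a continuous groupoid homomorphism $\tilde\pi\colon\E/\E_S\to G/S$. Surjectivity of $\tilde\pi$ is inherited from $\pi$; for injectivity, if $\tilde\pi(\dot e)=\tilde\pi(\dot f)$, then $\pi(e\inv f)=\pi(e)\inv\pi(f)\in S$, so $e\inv f\in\pi\inv(S)=\E_S$ and $\dot e=\dot f$. Finally, $\tilde\pi$ is open because for every open $U\subset\E/\E_S$, we have $\tilde\pi(U)=q_G(\pi(q_\E\inv(U)))$, and each of $q_\E$, $\pi$, $q_G$ is open. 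The main obstacle I foresee is the closed-form identification of the equivalence relation $R$ in~\ref{item:Lambda LCH}; once normality and $S\subset\Iso{G}$ reduce it to $\{(g,h): r(g)=r(h),\ g\inv h\in S\}$, everything else is bookkeeping on top of~\ref{item:quotient map open}.
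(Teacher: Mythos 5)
Your proof is correct, and it reaches all three conclusions by a more self-contained route than the paper, which leans on the literature at each step. For part~\ref{item:quotient map open}, the paper cites \cite[Corollary A.8]{Renault:2023:Ext} (using that $S$ is locally closed with open range map), whereas you run the saturation argument directly: writing $U\E_{S}$ as the image of an open set under the action map, factoring through the homeomorphism onto $\E\bfp{p}{s}\E_{S}$, and using that the projection off a fibre product is open when the other leg's structure map ($s$ on $\E_S$, which you correctly reduce to openness of $r$ on $S$ via inversion and openness of $\pi|_{\E_S}$) is open --- this is exactly the ``Fell's criterion'' alternative the paper mentions only parenthetically, and the one-directional implication is all you use, so the overly strong ``if and only if'' phrasing is harmless. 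For part~\ref{item:Lambda LCH}, the paper deduces Hausdorffness from properness of the $\E_{S}$-action on $\E$ (\cite[Example 2.16 and Proposition 2.18]{Wil2019}), while you use the closed-graph criterion for open quotient maps; your identification $R=\{(g,h): r(g)=r(h),\ g\inv h\in S\}$ is valid (composability of $g\inv$ with $h$ forces $r(g)=r(h)$, and $S\subset\Iso{G}$ then forces $s(g)=s(h)$), and closedness of $R$ follows as you say from closedness of $S$ and continuity of the operations. For part~\ref{item:Lambda is Q}, the paper applies Fell's criterion twice, while your identity $\tilde\pi(U)=q_G(\pi(q_\E\inv(U)))$ gives openness in one line; both are fine. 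The trade-off is that your argument is longer but elementary and makes the role of each hypothesis (openness of $r|_S$, closedness of $S$) visible, whereas the paper's is shorter at the cost of importing properness machinery.
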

Because of Part~\ref{item:Lambda is Q}, we will frequently denote elements of $G/S$ by $\dot{e}=q_{\E}(e)$ for $e\in \E$.
\begin{proof}
    \ref{item:quotient map open} 
    Since we assumed that $S\z=G\z$, that the range map of~$S$ is open, and that~$S$ is closed (and hence locally closed) in the locally compact groupoid~$G$, it follows from \cite[Corollary A.8]{Renault:2023:Ext} that $q_{G}$ is open. These  assumptions on~$S$ likewise imply that the range map of $\E_{S}$ is open 
    and that $\E_{S}$ is locally closed in $\E$, so the same result implies that $q_{\E}$ is open. (Alternatively, one can easily prove \ref{item:quotient map open} using Fell's criterion \cite[Proposition 1.1]{Wil2019}.)
    
    \ref{item:Lambda LCH} That $\E/ \E_{S} $ is a groupoid is standard (see \cite{AMP:IsoThmsGpds}). By \cite[Example 2.16]{Wil2019}, the action of $\E$ on itself is proper. Since~$S$ is closed, this implies that the action of $ \E_{S} $ on $\E$ is likewise proper.
    By \cite[Proposition 2.18]{Wil2019}, $\E/ \E_{S} $ is thus locally compact and Hausdorff. Multiplication and inversion are continuous because the quotient map is open and continuous, and because multiplication and inversion are continuous on $\E$.

    \ref{item:Lambda is Q} 
    It is clear that $\tilde{\pi}$ is an isomorphism of algebraic groupoids. To see that it is a homeomorphism, one can make repeated use of Fell's criterion \cite[Proposition 1.1]{Wil2019}; 
    that way, one gets that $\tilde{\pi}$ is continuous because $q_{\E}$ is open and because $\pi$ and $q_{G}$ are continuous; and one gets that $\tilde{\pi}$ is open because $q_{\E}$ is continuous and  because $\pi$ and $q_{G}$ are open.
\end{proof}

\begin{corollary}
\label{cor:G/S:HleftX is an action}
    The action of $\E$ on $\twPD$ defined in Proposition~\ref{prop:E:HleftX is an action} descends to a continuous action $\mvisiblespace \HleftX \mvisiblespace\colon (G/S)\bfp{s}{P} \twPD \to \twPD $ given by the formula
    \[
        \left( \dot{e}\HleftX \kappa\right) (\tau)
        \coloneqq  
        \kappa (e\inv \tau e).
    \]
\end{corollary}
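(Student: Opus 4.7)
The plan is to handle the three standard steps in descending an action to a quotient: well-definedness, verification of the action axioms, and continuity.

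First, I would verify that the formula is independent of the chosen representative of $\dot e\in \E/\E_{S}$. If $\dot e=\dot f$, then $f=e\sigma$ for some $\sigma\in \E_{S}(s(e))$. For $\kappa\in\twPD(s(e))$ and $\tau\in \E_{S}(r(e))$, normality of~$S$ gives $e\inv \tau e\in \E_{S}(s(e))$, and since $\E_{S}$ is abelian (part of Assumption~\ref{as:E,G,S}), conjugation by $\sigma$ fixes $e\inv \tau e$. Hence $\kappa(f\inv \tau f)=\kappa(\sigma\inv e\inv \tau e\,\sigma)=\kappa(e\inv \tau e)$, so the value depends only on $\dot e$. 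Equivalently, the restricted action of $\E_{S}$ on $\twPD$ is trivial (as noted just after Proposition~\ref{prop:E:HleftX is an action}), and associativity of the $\E$-action yields the same conclusion. Composing with the isomorphism $\tilde\pi$ of Lemma~\ref{lem:quotients}\ref{item:Lambda is Q} transports this to a well-defined map on $(G/S)\bfp{s}{P}\twPD$.

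Second, the unit and associativity axioms for the $(G/S)$-action transfer immediately from the corresponding axioms for the $\E$-action of Proposition~\ref{prop:E:HleftX is an action}, using that $q_{\E}\colon \E\to \E/\E_{S}$ is a groupoid homomorphism and that $\tilde\pi$ is a groupoid isomorphism.

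The substantive step is continuity. Here the plan is to realize the action as the descent of a continuous map along an open quotient map. Let $\alpha\colon \E\bfp{s}{P}\twPD\to\twPD$ denote the $\E$-action of Proposition~\ref{prop:E:HleftX is an action}, and consider
\[
\tilde q\colon \E\bfp{s}{P}\twPD\to (\E/\E_{S})\bfp{s}{P}\twPD,\qquad (e,\kappa)\mapsto (\dot e,\kappa).
\]
This map is surjective (since $q_{\E}$ is), and I would show it is open by writing it as the restriction of $q_{\E}\times\mathrm{id}_{\twPD}$ to $\E\bfp{s}{P}\twPD$; openness of $q_{\E}$ from Lemma~\ref{lem:quotients}\ref{item:quotient map open} then propagates through the product. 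By Step~1, $\alpha$ factors as $\bar\alpha\circ\tilde q$ for a unique set-theoretic map $\bar\alpha$, and the universal property of open quotient maps forces $\bar\alpha$ to be continuous. Transporting through $\tilde\pi$ produces the required continuous $(G/S)$-action.

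I do not anticipate any genuine obstacle; the subtlest point is confirming that $\tilde q$ is an open surjection rather than merely continuous, and this is handled cleanly by openness of $q_{\E}$ together with the product topology on the fiber product.
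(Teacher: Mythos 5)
Your proposal is correct and follows essentially the same route as the paper: the well-definedness computation (conjugating $e\inv\tau e$ by $\sigma\in\E_{S}$ and using that $\E_{S}$ is abelian) is identical, and your continuity argument via the universal property of the open quotient map $\tilde q$ is the same idea the paper implements with Fell's criterion and the openness of $q_{\E}$ from Lemma~\ref{lem:quotients}. The only difference is that you spell out the transfer of the action axioms and the openness of $\tilde q$ on the fibre product, which the paper leaves implicit.
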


\begin{proof}
    If $f=e\sigma$ for some $\sigma\in \E_{S} (s(e))$, then
    \[
    \kappa ( f\inv \tau f)
    =
    \kappa \bigl((e\sigma)\inv \tau (e\sigma)\bigr)
    =
    \kappa (\sigma\inv (e\inv \tau e)\sigma)
    =
    \kappa(e\inv \tau e),
    \]
    In other words 
    $\dot{e}\HleftX \kappa$ does not depend on the chosen representative $e$ of $\dot{e}$.
    Lastly, using Fell's criterion, continuity of the induced action follows from the fact that the original action is continuous and that $q_{\E}$ is an open map (Lemma~\ref{lem:quotients}\ref{item:quotient map open}).
\end{proof}

The results of Proposition~\ref{prop:E:HleftX is an action} and of Corollary~\ref{cor:G/S:HleftX is an action} below were used in \cite[p.\ 22ff]{IKRSW:2021:Extensions} and \cite[p.\ 259ff]{Renault:2023:Ext}. 
In both papers, the authors assumed that $\E$ has a Haar system so that, in particular, its range map is open \cite[Proposition 1.23]{Wil2019}, and that the closed subgroup bundle~$S$ has a Haar system. As the proofs given here show, these assumptions are needed to ensure that the action of $\E/ \E_{S} $ on $\twPD$ is continuous.

\begin{hooptedoodle}\label{hoop:G also acts on twPD}
    Since the $\mathbb{T}$-action on $\E$ is central and since the map $\pi\colon \E\to G$ is also an open map, the same argument as in the proof of Corollary~\ref{cor:G/S:HleftX is an action} shows that we also get a continuous action of $G=\E/\mathbb{T}$ on $\twPD$ given by $\pi(e)\HleftX \kappa \coloneqq   e\HleftX\kappa$. 
\end{hooptedoodle}

Since the groupoids $G/S$ and $\E/\E_{S}$ are canonically isomorphic (Lemma~\ref{lem:quotients}), one has to be careful not to misinterpret the action of that quotient on $\twPD$; the following example illustrates this.

\begin{example}[continuation of Example~\ref{ex:c_theta:setup}]\label{ex:c_theta:S=Zx0}
    We return to the twisted group $\mathbb{Z}^2$ with $2$-cocycle~$\cocycle_{\theta}$ defined at~\eqref{eq:c_theta}.  Let us record the following equality in the twist $\E_{\theta}\coloneqq\mathbb{T}\times_{\cocycle_\theta}\mathbb{Z}^2$, which we will make frequent use of: if $ \vec{g} , \vec{h} \in \mathbb{Z}^{2}$ and $z,w\in\mathbb{T}$, then
    \begin{equation}\label{eq:c_theta:conjugation}
        ( w;  \vec{g} )\inv (z;\vec{h} ) ( w;  \vec{g} ) 
        =
        \bigl(
            \lambda^{g_{1}h_{2} - g_{2}h_{1}}
            z
            ; \vec{h} 
        \bigr)
        \quad
        \text{where }\lambda=\mathsf{e}^{2\pi i\theta}.
    \end{equation}
    
    The $2$-cocycle $\cocycle_{\theta}$ is constant $1$ on the normal subgroup $S\coloneqq  \{0\}\times\mathbb{Z}$ of $G=\mathbb{Z}^{2}$, so that $\E_{\theta,S}\coloneqq  (\E_{\theta})_{S}$ is the trivial twist, $\mathbb{T}\times S$. It follows directly from Equation~\eqref{eq:c_theta:conjugation} that an element $\vec{g}=(g_{1},g_{2}) \in G$ acts on an element $\kappa\in \twPD[\E_{\theta}]$ as per Hooptedoodle~\ref{hoop:G also acts on twPD} by
    \[
     \left(\vec{g}\HleftX\kappa\right) ( z; 0,n)
     =
     \kappa
     \left(
     \lambda^{g_{1}n}
     z; n,0\right)
     \quad\text{ for }( z;0,n)\in \E_{\theta,S}.
    \]
    Since $\E_{\theta,S}$ is trivial,  $\twPD[\E_{\theta}]$ is canonically homeomorphic to $\widehat{S}$ (Remark~\ref{rmk:twPD for no twist}) which, via the `normal' Fourier transform, is $\mathbb{T}$. To be more explicit, $z\in\mathbb{T}$ corresponds to the element $\kappa_{z}\in \twPD[\E_{\theta}] $ that sends $\left( w; 0,n\right)\in\E_{\theta,S}$ to $w z^n\in \mathbb{T}$. The above can then be rewritten to
    \[
    \vec{g}\HleftX
    z
    = 
    \lambda^{g_{1}}
    z
    \quad\text{ where }\quad z\in\mathbb{T}.
    \]
    Thus, if we further identify $G/S\cong\E_{\theta}/\E_{\theta,S}$ with $\mathbb{Z}$ by $(g_{1},g_{2})+S\mapsto g_{1}$, then we arrive at the following action of $\mathbb{Z}$ on $\mathbb{T}$:
    \[
     n\HleftX z
     =
     \lambda^{n}
     z
     \quad\text{ where }\quad n\in\mathbb{Z},z\in\mathbb{T}.
    \] 
    In other words, the action groupoid $\actiongpd{\twPD[\E_{\theta}]}{(G/S)}$ of the action in Corollary~\ref{cor:G/S:HleftX is an action} is canonically isomorphic to the action groupoid $\actiongpd[\theta]{\mathbb{T}}{\mathbb{Z}}$ of rotation by $\theta$ on the circle as given in Example~\ref{ex:rotation as transformation}.
    This example shows that, even if $\E_{S}$ is trivial, then the action of $G/S$ on $\widehat{S}$ is \emph{not} given by a formula such as $(\dot{g}\HleftX \chi)(s)=\chi(g\inv sg)$ for $\chi\in\widehat{S}$, for otherwise any {\em abelian}~$G$ like $\mathbb{Z}^{2}$ above would induce a trivial action of $G/S$ on $\widehat{S}$. 
\end{example}

\section{The Weyl pair from a non-traditional Cartan pair}\label{sec:Weyl}

\begin{assumption}\label{as:E,G,S,v2}
    From now on, we assume the conditions on $\atgext$ of Theorem~\ref{thm:DWZ}\ref{it:thm:DWZ:S}, that is:
    \begin{itemize}
        \item $G$ is a \LCH, \etale\ groupoid;
        \item $\E$ is a twist over~$G$ as in~\eqref{eq:def twist};
        \item $S$ is a clopen normal subgroupoid of~$G$ with $S\z$ equal to $\cU\coloneqq G\z$; and
        \item $S$ satisfies the conditions~\eqref{eq:thm:DWZ:maximal}  and~\eqref{eq:thm:DWZ:ricc} of Theorem~\ref{thm:DWZ}.
    \end{itemize}
\end{assumption}

\begin{remark}
    \begin{enumerate}
        \item If~$S$ is any \LCH, \etale\ groupoid with {\em abelian} twist $\E_{S}$, then choosing $G=S$ and $\E=\E_{S}$ gives a twist $\atgext$ as in Assumption~\ref{as:E,G,S,v2}.

    \item    Since~$S$ is open and~$G$ \etale, the source map of~$S$ is open. Moreover, it follows from ~\eqref{eq:thm:DWZ:maximal} that $\E_{S}$ is abelian \cite[Corollary 3.17]{DWZ:2025:Twist}. Thus, Assumption~\ref{as:E,G,S,v2} implies Assumption~\ref{as:E,G,S}, so we may invoke our results from Subsection~\ref{ssec:actions}. 
    \item There is still no need for us to assume that $G$ is second countable, as we will not make use of Renault's disintegration theorem.
    \end{enumerate}
\end{remark}

\begin{notation}
By Theorem~\ref{thm:DWZ}, $B\coloneqq   \csr(S; \E_{S} )$ is a (non-traditional) Cartan subalgebra of $A\coloneqq   \csr(G;\E)$.
     We denote by 
     \begin{itemize}
         \item $\Sigma_{B\subset A}\to W_{B\subset A}$ the associated Weyl twist and groupoid;
         \item $\twPD$ the twisted dual of~$S$ (Definition~\ref{def:twPD}); and
         \item 
         $\actiongpd{\twPD}{\E} $ and          $\Weylgpd$ the action groupoids
         as in Definition~\ref{def:action groupoid} of the actions in Proposition~\ref{prop:E:HleftX is an action} and Corollary~\ref{cor:G/S:HleftX is an action}, respectively.\footnote{In \cite{Renault:2023:Ext}, Renault uses the symbol $\twPD\rtimes\E$ instead, since he defined the action of $\E$ on $\twPD$ as a right action \cite[p.\ 259]{Renault:2023:Ext}.} 
     \end{itemize}
\end{notation}

The goal of this section is to describe the above Weyl pair entirely in terms of the original data $\atgext $.
To this end, we first identify the spectrum $\widehat{B}$ of $B$ in terms of the underlying abelian twist $ \E_{S}\to S$:

\begin{prop}[{\cite[Proposition 3.7]{Renault:2023:Ext}}]
\label{prop:spec B}
    For $u\in \cU  $ and $\kappa\in \twPD(u)$, the map $\intA{\kappa}\colon C_c(S; \E_{S} )\to\mathbb{C}$ given by\footnote{In Definition~\ref{def:twPD}, we could have defined $\twPD(u)$ instead as those $\kappa\in\widehat{\E}_{S}(u)$ that satisfy $\kappa(z\cdot\sigma)=\overline{z}\kappa(\sigma)$. In this case, one would here need to use the Fourier transform as stated in \cite[Section 3]{MRW:1996:CtsTrace} instead of Equation~\eqref{eq:phi_kappa}. (Equation~\eqref{eq:phi_kappa} coincides with \cite[Equation (3.1)]{IKRSW:2021:Extensions}.)}
    \begin{align}\label{eq:phi_kappa}
        \intA{\kappa} (f) &= \sum_{   \{ \pi(\sigma)\in S(u) \}   }\overline{\kappa(\sigma)} f( \sigma)
    \end{align}
    is a $*$-algebra homomorphism and extends to a map on $\csr(S; \E_{S} )$. Moreover, the map 
    \[\sharp\colon\twPD \to \bigl(\csr(S; \E_{S} )\bigr)\,\widehat{}\;, \quad \kappa\mapsto \intA{\kappa},\] is a homeomorphism. In particular, the formula
\[
	e\HleftX (\intA{\kappa}) \coloneqq   \intA{(e\HleftX \kappa)}
\]
defines an action of $\E$, $G$, $G/S$ on $\widehat{B}$.
\end{prop}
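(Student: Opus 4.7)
The plan is to proceed in three stages: first verify that each $\intA{\kappa}$ is a well-defined $*$-homomorphism on $C_{c}(S;\E_{S})$ that extends to the reduced completion, then establish that $\sharp$ is a bijection onto the Gelfand spectrum, and finally deduce the homeomorphism claim and the descent of the action.

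For the first stage, since $G$ is \etale\ and $S$ is open, $S$ is also \etale, so each isotropy group $S(u)$ is discrete and the sum in~\eqref{eq:phi_kappa} is finite for $f\in C_{c}(S;\E_{S})$. The summand depends only on $\pi(\sigma)$ because replacing $\sigma$ by $z\cdot\sigma$ multiplies $f(\sigma)$ by $z$ (by $\mathbb{T}$-equivariance of $f$) while multiplying $\overline{\kappa(\sigma)}$ by $\overline{z}$ (by the defining property of $\twPD(u)$). Multiplicativity reduces to the identity $(f_{1}*f_{2})(\tau) = \sum_{\pi(\sigma)\in S(u)} f_{1}(\sigma) f_{2}(\sigma\inv \tau)$ for $\tau\in\E_{S}(u)$ combined with $\overline{\kappa(\tau)}=\overline{\kappa(\sigma)}\overline{\kappa(\sigma\inv\tau)}$, valid because $\E_{S}(u)$ is abelian and $\kappa$ is a character on it. The $*$-preservation follows from $\kappa(\sigma\inv)=\overline{\kappa(\sigma)}$, after changing variables. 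To extend to $\csr(S;\E_{S})$, the key input is that each fibre $\csr(S(u);\E_{S}(u))$ is the twisted $\cs$-algebra of a discrete abelian group and hence commutative and amenable; boundedness of $\intA{\kappa}$ as a $*$-homomorphism to $\mathbb{C}$ is then automatic, and the global extension is obtained either by assembling the fibrewise extensions through the conditional expectation onto $C_{0}(\cU)$ or by direct estimation in the regular representation.

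For bijectivity of $\sharp$, injectivity is immediate: restricting $\intA{\kappa}$ to $C_{0}(\cU)\subset B$ recovers evaluation at $P(\kappa)$, and then evaluating on $f\in C_{c}(S;\E_{S})$ supported on a small bisection around a chosen $\sigma\in \E_{S}(u)$ recovers $\kappa(\sigma)$. For surjectivity, any nonzero character $\phi$ of $B$ restricts to evaluation at some $u\in\cU$ on $C_{0}(\cU)$, so it factors through the fibre $\csr(S(u);\E_{S}(u))$; this fibre is the full twisted group $\cs$-algebra of a discrete abelian group, whose character space is precisely $\twPD(u)$ by classical $\mathbb{T}$-equivariant Pontryagin duality, producing a unique $\kappa\in\twPD(u)$ with $\phi=\intA{\kappa}$.

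Continuity of $\sharp$ follows by verifying, for a convergent net $\kappa_{i}\to\kappa$ in $\twPD$ and any $f\in C_{c}(S;\E_{S})$, that $\intA{\kappa_{i}}(f)\to \intA{\kappa}(f)$; here Lemma~\ref{lem:unique positive convergent lift} and condition~\ref{item:convergence netwise} of Definition~\ref{def:twPD} let one track how the finitely many summands over $\pi\inv(S(u_{i}))\cap\suppo(f)$ deform continuously into those over $\pi\inv(S(u))\cap \suppo(f)$, covering $\supp(f)$ by finitely many bisections. The step I expect to require the most care is continuity of $\sharp\inv$: given weak-$*$ convergence $\intA{\kappa_{i}}\to\intA{\kappa}$, one must produce, for each convergent lift $\tau_{i}\to\tau$ in $\E_{S}$, an $f\in C_{c}(S;\E_{S})$ whose values localize the sum in~\eqref{eq:phi_kappa} near $\tau_{i}$ and $\tau$ so as to extract the scalars $\kappa_{i}(\tau_{i})$ and $\kappa(\tau)$; the construction of such $f$ on a bisection containing $\pi(\tau)$, together with a partition-of-unity argument, converts the weak-$*$ hypothesis into the netwise condition defining convergence in $\twPD$. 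Once $\sharp$ is shown to be a homeomorphism, the final assertion is formal: the formula $e\HleftX \intA{\kappa}\coloneqq \intA{(e\HleftX\kappa)}$ merely transports the already continuous actions of $\E$, $G$, and $G/S$ on $\twPD$, furnished by Proposition~\ref{prop:E:HleftX is an action}, Hooptedoodle~\ref{hoop:G also acts on twPD}, and Corollary~\ref{cor:G/S:HleftX is an action}, to $\widehat{B}$.
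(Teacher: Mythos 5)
Your proposal is correct in outline, but note that the paper does not actually prove Proposition~\ref{prop:spec B}: it is imported wholesale from \cite[Proposition 3.7]{Renault:2023:Ext} (with the footnote pointing to \cite[Section 3]{MRW:1996:CtsTrace} and \cite[Equation (3.1)]{IKRSW:2021:Extensions} for the untwisted and convention-adjusted versions), so there is no in-paper argument to compare against. Your self-contained sketch follows the standard route and all the individual steps are sound: the $\mathbb{T}$-equivariance cancellation, the character identity for multiplicativity, the bisection-localization arguments for injectivity and for both directions of continuity, and the transport of the action along $\sharp$ are all exactly what one would write down. The two places where your sketch is thinnest are precisely where the cited references do the real work: (a) the extension of $\intA{\kappa}$ to $\csr(S;\E_{S})$ and (b) the factorization of an arbitrary character of $B$ through a single fibre. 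Both hinge on the comparison $\norm{f|_{\E_{S}(u)}}_{\cs_{r}(S(u);\E_{S}(u))}\le\norm{f}_{\red}$ together with amenability of the discrete abelian fibres (so that fibrewise full and reduced norms agree); for (b) one additionally needs that $f|_{\E_{S}(u)}=0$ forces $\phi(f)=0$, which requires an upper-semicontinuity estimate on $v\mapsto\norm{f|_{\E_{S}(v)}}$ (obtainable by dominating by the fibrewise $\ell^{1}$-norm after decomposing $f$ over finitely many bisections). Neither point is a gap in the sense of an unfixable error, but if you intend the proof to stand on its own rather than defer to \cite{Renault:2023:Ext}, these norm comparisons should be spelled out rather than declared automatic.
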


 We can now state our first theorem which concerns the Weyl groupoid,  $W_{B\subset A}$.

\begin{thm}\label{thm:Weyl groupoid}
There exists an isomorphism $\delta\colon \actiongpd{\twPD}{(\E/\E_{S})} = \Weylgpd   \to W_{B\subset A}$ of topological groupoids determined by
    \[
    \delta (\dot{e},\kappa) = 
     [ 
        n, \intA{\kappa}
     ] 
    \]
    where $n\in C_c (G; \E )$ is any element with $\suppoG(n)$ a bisection and with $\dot{e}\in q_{\E}(\suppo(n))$.
\end{thm}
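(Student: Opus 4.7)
The plan is to establish Theorem~\ref{thm:Weyl groupoid} in four stages: well-definedness of $\delta$, the verification that $\delta$ is an algebraic groupoid homomorphism, bijectivity, and the matching of topologies.

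\textbf{Stage 1 (Well-definedness).} Given $(\dot{e},\kappa)$ with $P(\kappa)=s(\dot{e})$, I first need to produce an $n\in C_c(G;\E)$ with $\suppoG(n)$ a bisection, $\dot{e}\in q_{\E}(\suppo(n))$, and $\intA{\kappa}\in\dom(n)$. Since $G$ is \'etale, bisection neighborhoods of $\pi(e)$ exist; a standard partition-of-unity / Urysohn construction in the twist, combined with a bump function positive at $s(e)$, yields such $n$, which is then a normalizer of $B$ in $A$ by the usual characterization in the \'etale Cartan setting. Next, I must check that $[n,\intA{\kappa}]$ does not depend on the choice of $n$: if $m$ is a second such normalizer, I produce $b,b'\in B$ with $\intA{\kappa}(b),\intA{\kappa}(b')\neq 0$ and $nb=mb'$ by multiplying each of $n,m$ by a bump function $h\in C_c(S;\E_S)$ concentrated near $s(e)$; Lemma~\ref{lem:unique positive convergent lift} ensures that both products agree near $e$ up to elements of $B$. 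Finally, I verify independence from the choice of representative $e$ of $\dot{e}$: if $f=e\sigma$ with $\sigma\in\E_S(s(e))$, then a normalizer $m$ supported on a bisection through $\pi(f)$ can be written as $n\cdot h$ for a suitable $h\in B$ with $\intA{\kappa}(h)\neq 0$, using the convolution formula.

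\textbf{Stage 2 (Groupoid homomorphism).} On units, for $\dot{e}=P(\kappa)\in\cU$, choosing $n$ any positive element of $B$ non-vanishing at $\kappa$ shows $\delta$ sends $(\kappa,\kappa)$ to $[\intA{\kappa},\intA{\kappa}]$. For composition, if $n,m\in C_c(G;\E)$ have $\suppoG$ bisections through $\pi(f)$ and $\pi(e)$ respectively, then $n*m$ has $\suppoG$ a bisection through $\pi(fe)$, so $\delta(\dot{f}\dot{e},\kappa)=[nm,\intA{\kappa}]$. Compatibility with the Weyl-groupoid multiplication reduces to the identity $\alpha_n(\intA{\kappa})=\intA{(\dot{e}\HleftX \kappa)}$, which is a direct computation from \eqref{eq:def'n of alpha_n} and \eqref{eq:phi_kappa}. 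Inverses are handled analogously via $n^*$.

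\textbf{Stage 3 (Bijectivity).} Surjectivity follows from the fact that, for an \'etale Cartan pair, the normalizers $N(B)$ are densely generated inside $A$ by functions in $C_c(G;\E)$ whose open support lies in a single bisection; thus every class $[n',\intA{\kappa}]\in W_{B\subset A}$ has such a representative, and shrinking to a point in the image gives a preimage under $\delta$. For injectivity, suppose $\delta(\dot{e},\kappa)=\delta(\dot{f},\kappa)$ with representatives $n,m$; the relation $\sim$ produces $b,b'\in B$ with $\intA{\kappa}(b),\intA{\kappa}(b')\neq 0$ and $nb=mb'$. Evaluating both sides near $e$ and $f$ via the convolution formula and \eqref{eq:phi_kappa}, and invoking conditions \eqref{eq:thm:DWZ:maximal} and \eqref{eq:thm:DWZ:ricc} to rule out accidental cancellation, forces $f\inv e\in \E_S$, i.e., $\dot{e}=\dot{f}$. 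This is the main obstacle in the proof, because here one must extract the rigid groupoid-theoretic conclusion $\dot{e}=\dot{f}$ from the purely $\cs$-algebraic equation $nb=mb'$; without the interior hypothesis \eqref{eq:thm:DWZ:ricc} the argument would fail, and this is precisely where the non-traditional Cartan structure enters.

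\textbf{Stage 4 (Homeomorphism).} The topology on $W_{B\subset A}$ is generated by sets of the form $\{[n,x]:x\in V\cap\dom(n)\}$ for $n\in N(B)$ and $V\subset\widehat{B}$ open, while $\Weylgpd$ carries the subspace topology from $(G/S)\times\twPD$. Using the homeomorphism $\sharp$ of Proposition~\ref{prop:spec B} and the fact that $q_{\E}$ is open (Lemma~\ref{lem:quotients}\ref{item:quotient map open}), one checks that both $\delta$ and $\delta\inv$ map basic open sets to open sets: a basic neighborhood $\{[n,x]:x\in V\cap\dom(n)\}$ with $\suppoG(n)$ a bisection corresponds under $\delta\inv$ to $\{(\dot{e},\kappa):\dot{e}\in q_{\E}(\suppo(n)),\,\intA{\kappa}\in V\cap\dom(n)\}$, which is open by construction. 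Together with Stages 1--3, this completes the proof.
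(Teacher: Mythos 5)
Your overall architecture differs from the paper's: the paper proves Theorem~\ref{thm:Weyl twist} first and then obtains $\delta$ essentially for free from the factorization $\delta(\dot e,\kappa)=(\Pi\circ\tilde\Delta)(e,\kappa)$, so that surjectivity, the homomorphism property, continuity and openness are all inherited from $\tilde\Delta$ and the quotient maps via Fell's criterion, while injectivity and well-definedness are delegated to the equivalences (i)$\iff$(iii) of Proposition~\ref{prop:Weyl explained}. Your from-scratch route is viable in principle, and Stages 1, 2 and 4 are essentially re-derivations of ingredients the paper has already isolated (Lemma~\ref{lem:DGN Proposition 4.4}, openness of $q_{\E}$, the homeomorphism $\sharp$). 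However, Stage 3 contains a genuine gap.

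The problem is the injectivity argument. First, "evaluating both sides of $nb=mb'$ near $e$ and $f$ via the convolution formula" is not legitimate as stated: $b$ and $b'$ are arbitrary elements of the completed algebra $B=\csr(S;\E_S)$, not compactly supported functions, so pointwise convolution formulas do not apply to them directly. The paper's Proposition~\ref{prop:Weyl explained} circumvents this by applying $\intA{\kappa}\circ\Phi(m^*\,\cdot\,)$ to the identity $nb=mb'$; $B$-linearity of $\Phi$ and multiplicativity of $\intA{\kappa}$ then yield $\intA{\kappa}(\Phi(m^*n))\neq 0$, and since $m^*n\in C_c(G;\E)$ with $\suppoG(m^*n)$ a bisection and $\Phi(m^*n)=(m^*n)|_{\E_S}$ (this is where \emph{clopenness} of $S$ enters), the sum defining $\intA{\kappa}(\Phi(m^*n))$ has at most one nonzero term. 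Second, and more importantly, your claim that \eqref{eq:thm:DWZ:ricc} is what "rules out accidental cancellation" and is "precisely where the non-traditional Cartan structure enters" misidentifies the mechanism: cancellation is impossible because the bisection hypothesis leaves only a single summand, and neither \eqref{eq:thm:DWZ:maximal} nor \eqref{eq:thm:DWZ:ricc} is invoked anywhere in the (i)$\implies$(ii)$\implies$(iii) chain. Those two conditions enter the theorem only through Theorem~\ref{thm:DWZ}, to guarantee that $B$ is Cartan so that $W_{B\subset A}$ is defined at all. As written, Stage 3 does not contain a correct derivation of $\dot e=\dot f$ from $nb=mb'$, and the step you flag as the "main obstacle" is resolved by an argument you have not supplied.
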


The second theorem shows that the Weyl twist, $\Sigma_{B\subset A}$, is canonically isomorphic to a quotient of  
         $\actiongpd{\twPD}{\E} $:

\begin{thm}\label{thm:Weyl twist}
    There exists a surjective, continuous, open groupoid homomorphism
    $\tilde{\Delta} \colon \actiongpd{\twPD}{\E}  \to \Sigma_{B\subset A}$ determined by
    \[
    (e,\kappa) 
    \longmapsto
     \llbracket 
        n, \intA{\kappa}
     \rrbracket
    \]
    where $n\in C_c (G; \E )$ is any element with $\suppoG(n)$ a bisection and with $n(e)>0$. Its kernel is given by 
     the subgroupoid
    \begin{equation}\label{eq:def cK}
        \cK \coloneqq  
        \left\{(\sigma,\kappa)\in \actiongpd{\twPD}{\E} :
        \sigma\in \E_{S} , \kappa(\sigma)=1\right\}.
    \end{equation}
    and the induced groupoid isomorphism
    \[
    \Delta\colon 
    \bigl(\actiongpd{\twPD}{\E}\bigr)/\cK
    \overset{\cong}{\longrightarrow} \Sigma_{B\subset A}
    \]
    is a homeomorphism.
\end{thm}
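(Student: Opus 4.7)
The plan is to leverage Theorem~\ref{thm:Weyl groupoid}, which supplies the isomorphism $\delta\colon\Weylgpd\to W_{B\subset A}$, and to verify that $\tilde{\Delta}$ fits into the commutative square
\[
\begin{tikzcd}
\actiongpd{\twPD}{\E}\arrow[r,"\tilde{\Delta}"]\arrow[d,"q"'] & \Sigma_{B\subset A}\arrow[d,"\Pi"]\\
\Weylgpd\arrow[r,"\delta","\cong"'] & W_{B\subset A}
\end{tikzcd}
\]
where $q$ is the quotient by the $\E_{S}$-action and $\Pi$ is the quotient by $\mathbb{T}$. Since $\delta$ is already an isomorphism, this reduces the core task to verifying that the positivity condition $n(e)>0$ exactly captures the passage from the $\sim$-equivalence defining $W_{B\subset A}$ to the finer $\approx$-equivalence defining $\Sigma_{B\subset A}$.

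For well-definedness, take two admissible representatives $n,n'\in C_{c}(G;\E)$ at $(e,\kappa)$. Theorem~\ref{thm:Weyl groupoid} already gives $(n,\intA{\kappa})\sim(n',\intA{\kappa})$, so $\llbracket n',\intA{\kappa}\rrbracket=z\cdot\llbracket n,\intA{\kappa}\rrbracket$ for some $z\in\mathbb{T}$; one must show $z=1$. The key observation, via Lemma~\ref{lem:unique positive convergent lift}, is that each of $n$ and $n'$ has a well-defined continuous positive lift over a bisection neighborhood of $\pi(e)$, and the hypothesis $n(e),n'(e)>0$ is precisely the statement that these two positive lifts agree at $\pi(e)$ (and hence, by continuity, on a neighborhood of $\pi(e)$). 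Shrinking via multiplication by a bump $h\in C_{c}(\cU)$ with $h(u)>0$, one then writes $n=\tilde{\phi}\cdot n'$ locally for some $\tilde{\phi}\in C_{c}(\cU)\subset B$ with $\tilde{\phi}(u)=n(e)/n'(e)>0$. Taking $b\coloneqq h$ and $b'\coloneqq \tilde{\phi}h$ then yields $nb=n'b'$ together with $\intA{\kappa}(b),\intA{\kappa}(b')>0$, which is exactly $(n,\intA{\kappa})\approx(n',\intA{\kappa})$.

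For the groupoid and algebraic properties: the convolution $n'*n$ of two bisection-supported functions has bisection support, with $(n'*n)(fe)=n'(f)n(e)>0$, so $\tilde{\Delta}$ respects multiplication; inversion is similar via $n^{*}(e^{-1})=\overline{n(e)}$. Surjectivity follows because \cite[Corollary 3.25]{DWZ:2025:Twist} guarantees a dense supply of bisection-supported normalizers, and multiplying any such $m$ by a constant in $\mathbb{T}$ places it in the image of $\tilde{\Delta}$. For the kernel, $(e,\kappa)\in\ker(\tilde{\Delta})$ means $\llbracket n,\intA{\kappa}\rrbracket$ is a unit of $\Sigma_{B\subset A}$, i.e.\ $\approx$-equivalent at $\intA{\kappa}$ to some $b\in B$ with $\intA{\kappa}(b)>0$; shrinking $n$ to a small bisection neighborhood then forces $\pi(e)\in S$, and combining $n(e)>0$ with the positivity of $\intA{\kappa}(n)$ via formula~\eqref{eq:phi_kappa} and the $\mathbb{T}$-equivariance of $n$ pins down $\kappa(e)=1$, giving $(e,\kappa)\in\cK$. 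Continuity and openness of $\tilde{\Delta}$ are then checked directly against the basic open sets $\{\llbracket zn,x\rrbracket:z\in U,\,x\in V\cap\dom(n)\}$ generating the topology on $\Sigma_{B\subset A}$, once again using Lemma~\ref{lem:unique positive convergent lift} to track positive lifts as $\kappa$ varies.

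The main obstacle is the local calculation underpinning well-definedness: it is there that the positivity condition $n(e)>0$ must be recognised as the correct section of the $\mathbb{T}$-bundle $\Sigma_{B\subset A}\to W_{B\subset A}$, requiring a careful interplay between the bisection structure of $\suppoG(n)$, the $\mathbb{T}$-equivariance of $C_{c}(G;\E)$, and the continuity of positive lifts. Once this step is in hand, $\Delta$ is immediately a continuous bijective groupoid homomorphism from $(\actiongpd{\twPD}{\E})/\cK$ onto $\Sigma_{B\subset A}$, and it is a homeomorphism by the openness of $\tilde{\Delta}$ combined with the universal property of the quotient.
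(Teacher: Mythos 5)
Your overall strategy --- verify directly that $\tilde{\Delta}$ is a well-defined, surjective, continuous, open homomorphism with kernel $\cK$, then descend --- is essentially the paper's, but two points need attention. First, the framing ``leverage Theorem~\ref{thm:Weyl groupoid}'' inverts the paper's logical order: the paper proves Theorem~\ref{thm:Weyl twist} \emph{first} and then obtains $\delta$ as $\Pi\circ\tilde{\Delta}$, with the surjectivity, continuity, and openness of $\delta$ all inherited from $\tilde{\Delta}$; quoting the isomorphism $\delta$ here is therefore circular as stated. What saves you is that the only fact you actually use is $[n,\intA{\kappa}]=[n',\intA{\kappa}]$ for two admissible representatives, and that is exactly Proposition~\ref{prop:Weyl explained}, \ref{it:iii}$\implies$\ref{it:i}, which is proved independently of both theorems; cite that instead. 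Second, your well-definedness argument is a genuine (and workable) alternative to the paper's route through \ref{it:III}$\implies$\ref{it:I} of Proposition~\ref{prop:Weyl explained} (which the paper establishes by precisely the ``the constant $z\in\mathbb{T}$ must be $1$'' bootstrap you describe): you exhibit the witnesses $b=h$, $b'=\tilde{\phi}h$ by hand. Just note that the identity $n=n'*\tilde{\phi}$ holds only over the open set $s\bigl(\suppo(n)\cap\suppo(n')\bigr)$, where the two bisections $\suppoG(n)$ and $\suppoG(n')$ genuinely coincide; you must place $\supp(h)$ inside that set, not merely inside $s(\suppo(n))\cap s(\suppo(n'))$, where the bisections may pass through different points of $Gv$.

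There are two genuine omissions. The kernel computation as written only establishes $\ker(\tilde{\Delta})\subset\cK$; the reverse inclusion --- that every $(\sigma,\kappa)$ with $\sigma\in\E_{S}$ and $\kappa(\sigma)=1$ is sent to a unit --- still has to be checked, e.g.\ by taking $f\in C_c(S;\E_{S})\cap N(B)$ with $f(\sigma)>0$ and computing $\intA{\kappa}(f)=\overline{\kappa(\sigma)}f(\sigma)>0$. And continuity and openness of $\tilde{\Delta}$ are only gestured at; these are the most delicate parts of the paper's proof (nets, compactness of $\mathbb{T}$, passage to subnets, Fell's criterion), though your plan of tracking positive lifts via Lemma~\ref{lem:unique positive convergent lift} is the right one. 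On the plus side, your final step is cleaner than the paper's: continuity of $\Delta$ via the universal property of the quotient topology requires only continuity of $\tilde{\Delta}$, and openness of $\Delta$ requires only openness of $\tilde{\Delta}$ together with continuity of the quotient map, so for this purpose you can bypass Lemma~\ref{lem:q by cK is open map} entirely.
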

    
\begin{remark}\label{rmk:Weyl twist}    
 The quotient $\bigl(\actiongpd{\twPD}{\E}\bigr)/\cK$  is denoted by $\underline{\Sigma}$ in \cite{Renault:2023:Ext}, by $\widehat{\Sigma}$ in \cite[Section 3.1]{IKRSW:2021:Extensions}, and by $\tilde{\Sigma}$ in \cite[Section 3.1]{IKRSW:2021:Pushout}.
    Going forward, we will instead use the non-standard notation
    \[
    \Weyltwist\coloneqq\bigl(\actiongpd{\twPD}{\E}\bigr)/\cK.
    \]
    To explain this choice, fix $u\in\cU=G\z$. An arbitrary element of the quotient can be written as follows, where $e\in \E_{u}$ and $\kappa\in \twPD(u)$:
    \begin{align*}
        (e,\kappa)\cK
        =
        \{(e\sigma,\kappa): \sigma\in \ker(\kappa)\}
    \end{align*}
    Now, for any $s\in S(u)$, there there exists a unique lift $\sigma$ of $s$ under $\pi$ such that $\kappa(\sigma)=1$. Indeed, if $\tau$ is any lift, then $\sigma\coloneqq   \overline{\kappa(\tau)}\cdot \tau$ has this property since $\kappa$ is $\mathbb{T}$-equivariant; in particular, we may also write the coset as
    \begin{align}\label{eq:cosets of cK}
        (e,\kappa)\cK
        =
        \{(\overline{\kappa(\tau)}\cdot e\tau,\kappa): \tau\in \E_{S}(u)\}
        .
    \end{align}
    Moreover, if $\sigma'$ is any other lift of $s$, then there exists $z\in\mathbb{T}$ with $\sigma'=z\cdot \sigma$, and so
    \begin{equation}\label{eq:unique lift wrt kappa}
        \kappa(\sigma')=\kappa(z\cdot \sigma)=z\kappa(\sigma)=z,
    \end{equation}
    showing that $\sigma$ is unique. 
    So in the quotient $(\actiongpd{\twPD}{\E} )/\cK$, we are, in essence, ``modding out not all of $\E_{S}$ but only~$S$.'' 
\end{remark}

\begin{remark}[Caveat]\label{rmk:confusing}
    In Theorems~\ref{thm:Weyl groupoid} and~\ref{thm:Weyl twist} (and all of their variants below), it is important to keep in mind that $\twPD$ is really just a space, not a groupoid, so that $\actiongpd{\twPD}{\E}$ (and its variants) is an action groupoid in the sense of Definition~\ref{def:action groupoid} and not an honest-to-goodness semi-direct product $\actiongpd{\mathcal{H}}{\mathcal{G}}$ of two groupoids. The reason we point this out is that matters get confusing when $\E_{S}$ happens to be trivializable: in this case, the twisted dual $\twPD$ can be identified with the dual $\widehat{S}$ (Remark~\ref{rmk:twPD for no twist}), so it inherits the structure of a group bundle with unit space $S\z$. The symbol $\actiongpd{\twPD}{\E}$ could therefore be misinterpreted to mean that $\E$ acts on {\em the groupoid} $\twPD$ by homomorphisms.

    There are a few ways for the reader to remember that, for our purposes, we must treat $\twPD$ as a space:
    (1)~Even if $\twPD$ inherits a groupoid structure from $\widehat{S}$, the action of $\E$ that we define on $\twPD$ in Proposition~\ref{prop:E:HleftX is an action} can be verified to {\em not} be by homomorphisms, so the semi-direct product would not have a well-defined structure of a groupoid, meaning that Theorem~\ref{thm:Weyl groupoid} would be nonsense. (2)~For Theorem~\ref{thm:Weyl groupoid} to have a chance of being true, we need the unit space of $\Weylgpd$ to coincide with the Gelfand dual of $B=\csr(S;\E_{S})$ which we know to be $\twPD$ (Proposition~\ref{prop:spec B}); the unit space of a semi-direct product groupoid $\actiongpd{\mathcal{H}}{\mathcal{G}}$ is, however, the unit space $\mathcal{H}\z$ of $\mathcal{H}$ \cite[Remark 2.20]{DuLi:ImprimThms}, not $\mathcal{H}$ itself.

    We should nevertheless keep in mind $\widehat{S}$'s `dual nature' (pun intended): When analyzing which groupoids $W$ can arise as the Weyl groupoid of certain non-traditional Cartan pairs, the authors of \cite{BG:2023:Gamma-Cartan-pp} needed to make use of $\widehat{S}$ both as a space in action groupoids {\em and} as a groupoid in its own right in semi-direct (and Zappa--Sz\'ep) products.
\end{remark}

\begin{remark}
    The above theorems are generalizations of those found in \cite{DGN:Weyl}, where we focused on twists $\E$ that are induced from a $2$-cocycle on a second countable groupoid~$G$ and for which the quotient map $q_{G}\colon G\to G/S$ allows a continuous section. To be precise,
    Theorem~\ref{thm:Weyl groupoid} generalizes \cite[Theorem 4.6]{DGN:Weyl}, and Theorem~\ref{thm:Weyl twist}  generalizes \cite[Theorem 5.1, Corollary 5.4]{DGN:Weyl}.
\end{remark}

To prove the theorems, we first need to better understand Renault's description of the Weyl groupoid and twist in the specific setting of our non-traditional Cartan pair, so that we can translate its properties into properties of the groupoids 
$\Weyltwist$ and $\Weylgpd$; see Proposition~\ref{prop:Weyl explained}. To this end, we generalize the ideas found in \cite[Section 4]{DGN:Weyl}:

Since~$S$ is normal, it follows from \cite[Corollary 3.25]{DWZ:2025:Twist} that
\begin{equation}\label{eq:def of N}
    N\coloneqq   \{ n\in C_c (G; \E ) : \suppoG(n)\text{ is a bisection}\}
\end{equation}
is a subset of $N(B)$. Since $N$ generates $A$ as a $\cs$-algebra, it follows from \cite[Proposition 4.1]{DGNRW:Cartan}, \cite{BG:2022:DGNRW}, \cite[Lemma 6.13.]{DL:ZStwist-pp} that every element of the Weyl twist (resp.\ groupoid) can be written as $\llbracket n,x\rrbracket$ (resp.\ $[n,x]$) for some $n\in N$ and some $x\in \dom(n)$.\footnote{While the groupoids in \cite{DGNRW:Cartan} are assumed to be second countable, it is clear that the proof given in \cite{BG:2022:DGNRW} does not depend on this assumption.} This is why, in the following, we restrict our attention to normalizers from this subset.

\begin{lemma}\label{lem:DGN Proposition 4.4}
    Suppose $n\in N$, $u\in \cU $, and $\kappa\in \twPD(u)$.
    \begin{enumerate}[label=\textup{(\arabic*)}]
        \item\label{item:DGN Proposition 4.4(1)} $u\in s(\suppo(n))$ if and only if $\intA{\kappa}\in \dom(n)$.
        \item\label{item:DGN Proposition 4.4(2)} If $e\in \suppo(n)\cap \E u $, then $\alpha_{n}(\intA{\kappa})=\intA{e\HleftX \kappa}$, where $\alpha_n$ is the partial homeomorphism determined by~\eqref{eq:def'n of alpha_n}.
    \end{enumerate}
\end{lemma}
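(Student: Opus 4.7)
The plan is to unfold the convolutions $n^{*}n$ and $n^{*}bn$ explicitly, exploit the bisection hypothesis on $\suppoG(n)$ to collapse the resulting sums, and then compare the outcomes with the formula~\eqref{eq:phi_kappa} defining $\intA{\kappa}$.

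For Part~\ref{item:DGN Proposition 4.4(1)}, I would first verify that $n^{*}n$ is supported in the unit fibres $\mathbb{T}\cdot\cU$: a non-vanishing term in $(n^{*}n)(e')$ demands two elements of $\suppoG(n)$ sharing a range, which by the bisection hypothesis forces them to coincide and so pins $\pi(e')$ to a unit. Evaluating at $u \in \cU$ then yields
\begin{equation*}
    (n^{*} n)(u) = |n(e)|^{2}
\end{equation*}
whenever some (equivalently, any) $e \in \suppo(n)$ sits over the unique $t_{0} \in \suppoG(n)$ with $s(t_{0}) = u$, and $0$ otherwise. Combining this with $\kappa(u) = 1$ and~\eqref{eq:phi_kappa} gives $\intA{\kappa}(n^{*}n) = |n(e)|^{2}$, which is nonzero precisely when $u \in s(\suppo(n))$, as required.

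For Part~\ref{item:DGN Proposition 4.4(2)}, I would carry out an analogous but three-fold convolution for $n^{*}bn$ with $b \in C_{c}(S;\E_{S})$. Applying the bisection trick to each copy of $n$ and using the normality of~$S$ for the middle factor, for fixed $\tau$ the only contributing triple in $G$ has outer projections $\pi(e)^{-1}$ and $\pi(e)$ and middle projection in $S(r(e))$, and a careful accounting of the $\mathbb{T}$-phases of lifts gives
\begin{equation*}
    (n^{*} b n)(\tau) = |n(e)|^{2}\, b(e \tau e^{-1})
    \quad\text{for } \tau \in \E_{S}(u),
\end{equation*}
with $n^{*}bn$ vanishing off $\E_{S}(u)$. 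Inserting this into~\eqref{eq:phi_kappa}, changing variables via the bijection $\E_{S}(u) \to \E_{S}(r(e))$, $\tau \mapsto e\tau e^{-1}$ (which is well-defined by normality of $\E_{S}$), and using $\kappa(e^{-1}\tau' e) = (e \HleftX \kappa)(\tau')$ yields
\begin{equation*}
    \intA{\kappa}(n^{*} b n)
    = |n(e)|^{2}\, \intA{e\HleftX\kappa}(b)
    = \intA{\kappa}(n^{*}n) \cdot \intA{e\HleftX\kappa}(b).
\end{equation*}
Comparison with the defining equation~\eqref{eq:def'n of alpha_n} for $\alpha_{n}$ identifies $\alpha_{n}(\intA{\kappa})$ with $\intA{e\HleftX\kappa}$ on the dense subalgebra $C_{c}(S;\E_{S}) \subset B$, and continuity in $b$ of both sides extends the equality to all of $B$.

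The main obstacle I anticipate is the careful bookkeeping of the central $\mathbb{T}$-phases that arise when one lifts composable $G$-triples to $\E$: each choice of lift carries a phase, and one must verify that the contributions from $n^{*}$, $b$, and $n$ cancel in tandem with the $\mathbb{T}$-equivariance of $\kappa$. That this cancellation must occur is forced by well-definedness of the convolution formula, but producing the clean closed-form expression above requires a systematic audit of the phases rather than a quick one-liner.
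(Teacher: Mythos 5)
Your proposal is correct and follows essentially the same route as the paper: for Part (1) the paper simply cites the verbatim argument from the $2$-cocycle case in \cite{DGN:Weyl}, which is exactly the computation $\intA{\kappa}(n^{*}n)=(n^{*}n)(u)=|n(e)|^{2}$ that you spell out, and for Part (2) the paper performs the same collapse of $\intA{\kappa}(n^{*}bn)$ to $|n(e)|^{2}\,\intA{(e\HleftX\kappa)}(b)$ via the bisection hypothesis and the reindexing $\sigma\mapsto e\sigma e^{-1}$, then compares with~\eqref{eq:def'n of alpha_n}. The phase bookkeeping you worry about is handled automatically by working with genuine elements of $\E$ and $\mathbb{T}$-equivariance of $n$, $b$, and $\kappa$, so no separate audit is needed.
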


\begin{proof}
    The proof of Part~\ref{item:DGN Proposition 4.4(1)} is verbatim that for  the case of a $2$-cocycle in \cite[Proposition 4.4(1)]{DGN:Weyl}, just that the reference to \cite[Lemma 3.11]{DGNRW:Cartan} has to be replaced with \cite[Corollary 3.23]{DWZ:2025:Twist}.

    To prove Part~\ref{item:DGN Proposition 4.4(2)}, we follow the ideas of \cite[Proposition 4.4(2)]{DGN:Weyl}. It follows from Part~\ref{item:DGN Proposition 4.4(1)} that $\intA{\kappa}\in\dom(n)$, so that $\alpha_{n}(\intA{\kappa})$ is defined and an element of $\dom(n^*)$. 
     Likewise, $e\inv\in \suppo(n^*)$ has $s(e\inv)=r(e) = P( e \HleftX \kappa)\coloneqq  v$, so it follows again from Part~\ref{item:DGN Proposition 4.4(1)} that $\intA{e \HleftX \kappa}$ is an element of $\dom(n^*)$. 
   To see that $\alpha_{n}(\intA{\kappa})=\intA{e \HleftX \kappa}$, it suffices by the defining property~\eqref{eq:def'n of alpha_n} of $\alpha_n$ to show that for any $b\in C_c(S; \E_{S} )$, we have
    \[
        \intA{\kappa} (n^* b n ) = \intA{(e \HleftX\kappa)}(b)\, \intA{\kappa}(n^*n)
        .
    \]
    We compute
    \begin{align*}
        \intA{\kappa} (n^* b n ) 
        &=
        \sum_{   \{ \pi(\sigma)\in S(u) \}   }\overline{\kappa(\sigma)} \, (n^* b n)( \sigma)
        \\
        &=
        \sum_{   \{ \pi(\sigma)\in S(u) \}   }\overline{\kappa(\sigma)} |n(e)|^{2} b(e\sigma e\inv)
        &&\text{as $e\in \suppo(n)\cap \E u $ 
        and $n\in N$}
        \\
        &=
        \sum_{    \{\pi(\tau)\in S(v)\}    }\overline{\kappa(e\inv\tau e)} |n(e)|^{2} b(\tau)
        &&\text{by reordering}
        \\&
        =\intA{(e \HleftX\kappa)}(b)\,|n(e)|^{2}.
    \end{align*}
    As $\suppo(n^*n)\subset \cU$ and $\kappa(u)=1$, we have
    \begin{align*}
        \intA{\kappa}(n^*n)
        &=
        \sum_{   \{ \pi(\sigma)\in S(u) \}   }\overline{\kappa(\sigma)}\, (n^* n)( \sigma)
        =
        (n^* n)(u) = |n(e)|^{2},
    \end{align*}
    which finishes the claim.
\end{proof}

\begin{prop}\label{prop:Weyl explained}
   Let $\Phi\colon A\to B$ be the conditional expectation.  For $n,m\in N$, $u\in \cU $, and $\kappa\in \twPD(u)$, the following three statements are equivalent:
    \begin{enumerate}[label=\textup{(\Roman*)}]
        \item\label{it:I} $\llbracket n,\intA{\kappa}\rrbracket=\llbracket m,\intA{\kappa}\rrbracket$  (equality in the Weyl twist);
        \item\label{it:II} $\intA{\kappa}(\Phi(m^*n))> 0$;
        \item\label{it:III} There exist elements $e\in \E u$  and $\sigma\in \E_{S} (u)$ such that $n(e)>0$, $m(e\sigma)>0$, and $\kappa(\sigma)=1$.
    \end{enumerate}
    Likewise, the following three are equivalent:
    \begin{enumerate}[label=\textup{(\roman*)}]
        \item\label{it:i} $[ n,\intA{\kappa} ]=[m,\intA{\kappa}]$    (equality in the Weyl groupoid);
        \item\label{it:ii} $\intA{\kappa}(\Phi(m^*n))\neq 0$;
        \item\label{it:iii} $q_{\E}(\suppo(n)\cap \E u ) = q_{\E}(\suppo(m)\cap \E u )\neq \emptyset$.
    \end{enumerate}
\end{prop}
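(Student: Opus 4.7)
The strategy is to derive an explicit formula for \(\intA{\kappa}(\Phi(m^*n))\) and then run two parallel three-cycle arguments. Since \(n,m\in N\), their supports in \(\E\) are bisections, so Lemma~\ref{lem:unique positive convergent lift} furnishes, for each \(u\in s(\suppo(n))\cap s(\suppo(m))\), unique elements \(e_n\in \suppo(n)\) and \(e_m\in \suppo(m)\) with source \(u\) and \(n(e_n),m(e_m)>0\). Unpacking the convolution \(m^*n\) and recalling that \(\Phi\colon A\to B\) is induced by restriction to \(\E_{S}\), one finds that \(\Phi(m^*n)\) vanishes on \(\E_{S}(u)\) unless \(\pi(e_m^{-1}e_n)\in S\), in which case its only nonzero \(\pi\)-class is represented by \(\tau=e_m^{-1}e_n\) with value \(m(e_m)n(e_n)\). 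Plugging this into Proposition~\ref{prop:spec B} yields
\[
\intA{\kappa}(\Phi(m^*n)) \;=\; \overline{\kappa(e_m^{-1}e_n)}\,m(e_m)\,n(e_n)
\]
provided \(u\in s(\suppo(n))\cap s(\suppo(m))\) and \(\pi(e_m^{-1}e_n)\in S\), and zero otherwise.

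Both \ref{it:II}\(\Leftrightarrow\)\ref{it:III} and \ref{it:ii}\(\Leftrightarrow\)\ref{it:iii} fall straight out of this formula. The product is positive exactly when \(\pi(e_m^{-1}e_n)\in S\) and \(\kappa(e_m^{-1}e_n)=1\); taking \(e=e_n\) and \(\sigma=e_n^{-1}e_m\) produces the data in \ref{it:III}, and the positivity uniqueness of Lemma~\ref{lem:unique positive convergent lift} shows every such \(e,\sigma\) must arise this way. Nonvanishing of the product is equivalent to \(\pi(e_m^{-1}e_n)\in S\), i.e., \(e_m^{-1}e_n\in\E_{S}\), which is precisely the condition that the cosets \(q_{\E}(e_n),q_{\E}(e_m)\) in \(\E/\E_{S}\) coincide, giving \ref{it:ii}\(\Leftrightarrow\)\ref{it:iii}. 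For \ref{it:I}\(\Rightarrow\)\ref{it:II} (and \ref{it:i}\(\Rightarrow\)\ref{it:ii}) I use the standard conditional-expectation trick: if \(nb=mb'\), then \(m^*nb=m^*mb'\), and \(B\)-linearity of \(\Phi\) combined with \(m^*m\in B\) gives \(\Phi(m^*n)\,b=m^*m\,b'\); evaluating the character \(\intA{\kappa}\) produces
\[
\intA{\kappa}(\Phi(m^*n))\,\intA{\kappa}(b) \;=\; \intA{\kappa}(m^*m)\,\intA{\kappa}(b').
\]
The equality \([n,\intA{\kappa}]=[m,\intA{\kappa}]\) presupposes \(\intA{\kappa}\in\dom(m)\), so Lemma~\ref{lem:DGN Proposition 4.4}\ref{item:DGN Proposition 4.4(1)} gives \(\intA{\kappa}(m^*m)>0\), and the sign (respectively, nonvanishing) transfers from the right to \(\intA{\kappa}(\Phi(m^*n))\).

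It remains to prove \ref{it:III}\(\Rightarrow\)\ref{it:I} and \ref{it:iii}\(\Rightarrow\)\ref{it:i} by explicit construction. Given \(e,\sigma\) as in \ref{it:III} (and taking \(\sigma=e_n^{-1}e_m\) in the \ref{it:iii} case, so that \(\kappa(\sigma)\in\mathbb{T}\) is arbitrary), let \(e_v^{[n]},e_v^{[m]}\) denote the positive representatives of \(\suppo(n),\suppo(m)\) with range \(v\), defined for \(v\) near \(r(e)\), and set \(\tau^v=(e_v^{[n]})^{-1}e_v^{[m]}\); then \(\tau^{r(e)}=\sigma\in\E_{S}\), and since \(\E_{S}\) is open, we may shrink \(v\) so that \(\tau^v\in\E_{S}\) throughout. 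Pick a nonnegative \(\phi\in C_c(\cU)\) with \(\phi(u)>0\) and support inside this neighborhood, \(\mathbb{T}\)-equivariantly extend \(\phi\) to \(b'\in C_c(S;\E_{S})\) supported on the bisection \(\cU\), and on the bisection \(\{\tau^v\}\) define
\[
b(\tau^v)\;=\;\frac{m(e_v^{[m]})}{n(e_v^{[n]})}\,\phi\bigl(s(e_v^{[m]})\bigr),
\]
extended \(\mathbb{T}\)-equivariantly and by zero elsewhere. A direct convolution check confirms \(nb=mb'\), and the sums from Proposition~\ref{prop:spec B} each collapse to a single term, giving \(\intA{\kappa}(b')=\phi(u)>0\) and \(\intA{\kappa}(b)=\overline{\kappa(\sigma)}\,\tfrac{m(e\sigma)}{n(e)}\phi(u)\); this is positive in case \ref{it:III} (as \(\kappa(\sigma)=1\)) and nonzero in case \ref{it:iii}, producing the required normalizer relation.

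The main obstacle is the support-tracking in this last step: verifying on a sufficiently small neighborhood that the assignment \(v\mapsto\tau^v\) actually lands in \(\E_{S}\), that the function \(b\) is genuinely continuous and compactly supported, and that \(nb\) and \(mb'\) agree on the nose as \(\mathbb{T}\)-equivariant functions on \(\E\) rather than merely on their common support. These are direct continuity and openness arguments that crucially use both the bisection hypothesis on \(\suppoG(n),\suppoG(m)\) and the fact that \(\E_{S}\) is clopen in \(\E\), but each needs to be spelled out carefully.
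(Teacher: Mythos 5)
Your proof is correct, and for the hardest implications it takes a genuinely different route from the paper. The first halves coincide: your closed formula $\intA{\kappa}(\Phi(m^*n))=\overline{\kappa(e_m\inv e_n)}\,m(e_m)\,n(e_n)$ is exactly the computation the paper performs term by term in its proofs of \ref{it:II}$\implies$\ref{it:III} and \ref{it:ii}$\implies$\ref{it:iii}, and your conditional-expectation argument for \ref{it:I}$\implies$\ref{it:II} and \ref{it:i}$\implies$\ref{it:ii} is the paper's. The divergence is in \ref{it:iii}$\implies$\ref{it:i} and \ref{it:III}$\implies$\ref{it:I}: the paper proves \ref{it:iii}$\implies$\ref{it:i} by showing that the partial homeomorphisms $\alpha_n$ and $\alpha_m$ agree on an open neighborhood of $\intA{\kappa}$ (via Lemma~\ref{lem:DGN Proposition 4.4}\ref{item:DGN Proposition 4.4(2)}), which identifies the classes only after invoking the equivalence of the $b,b'$-definition of $\sim$ with Renault's germ definition, and it then obtains \ref{it:III}$\implies$\ref{it:I} indirectly, by bootstrapping: \ref{it:iii}$\implies$\ref{it:i} gives $\llbracket n,\intA{\kappa}\rrbracket=\llbracket zm,\intA{\kappa}\rrbracket$ for a unique $z\in\mathbb{T}$, and \ref{it:I}$\implies$\ref{it:III} applied to $zm$ pins down $z=1$. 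You instead construct $b,b'\in C_c(S;\E_{S})$ with $nb=mb'$ and $\intA{\kappa}(b),\intA{\kappa}(b')$ positive (resp.\ nonzero) explicitly, which verifies $\approx$ (resp.\ $\sim$) directly from the definitions as stated in this paper, at the cost of the support bookkeeping you flag. Your approach is more self-contained and handles the twist and groupoid cases uniformly; the paper's avoids the function construction entirely but leans on the cited germ characterization. Two small points to watch in your construction: the neighborhood on which $\tau^v\in\E_{S}$ lives in the range variable $v$ near $r(e)$, whereas $\phi$ must be supported near $u=s(e)$, so one has to transport one to the other via the homeomorphism $v\mapsto s(e_v^{[m]})$ coming from the bisection property (note that $s(e_v^{[n]})=s(e_v^{[m]})$ only once $\tau^v$ is known to lie in $\Iso{\E}$); and the verification that $nb$ vanishes off $\bigcup_v\mathbb{T}\cdot e_v^{[m]}$ uses injectivity of the source map on $\suppo(n)$ to force $v=v'$, which is worth stating. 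Neither issue is a gap.
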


\begin{proof}
    If $nb=mb'$ for $b,b'\in B$, then since $\intA{\kappa}\colon B\to\mathbb{C}$ is a homomorphism and $\Phi$ is $B$-linear, it is easy to see that
    \[
        \intA{\kappa}(\Phi(m^*n))\,\intA{\kappa}(b)
        =
        \intA{\kappa}(m^*m)\,\intA{\kappa}(b').
    \]
    This is the only observation needed to prove \ref{it:I}$\implies$\ref{it:II} and  \ref{it:i}$\implies$\ref{it:ii}. For more details, see the proof of \cite[Lemma 4.3]{DGN:Weyl} in the case that the twist is induced by a $2$-cocycle.

    For \ref{it:II}$\implies$\ref{it:III}, note that $m^*n\in N$, so that $\Phi(m^*n)=(m^*n)|_{ \E_{S} }\in C_c(S;\E_{S})$ and hence by~\eqref{eq:phi_kappa},
    \[
    \intA{\kappa} (\Phi(m^*n)) = \sum_{\{\pi(\sigma)\in S(u)\}} \overline{\kappa(\sigma)} \, (m^*n)(\sigma)
    .
    \]
    Since we assumed $\intA{\kappa} (\Phi(m^*n))>0$ and since $\suppoG(m^*n)$ is a bisection, there thus exists a unique $s\in S(u)$ such that $\overline{\kappa(\sigma)}\, (m^*n)(\sigma) > 0$ for any $\sigma\in\pi\inv(\{s\})$. So if $\tau$ is the unique lift of $s$ such that $\kappa(\tau)=1$ (see Remark~\ref{rmk:Weyl twist}), then
    $\intA{\kappa} (\Phi(m^*n))>0$ implies that $(m^*n)(\tau)>0$.
    
    For this $\tau$, we have
    \[
    0< (m^*n)(\tau) = \sum_{\{\pi(f)\in Gu\} } m^*(\tau f\inv)\, n(f)
    =
    \sum_{\{\pi(f)\in Gu\} } \overline{m( f \tau\inv)} \, n(f).
    \]
    Since $\suppoG(n)$ is a bisection, there exists a unique $g\in Gu $ such that $\overline{m( f \tau\inv)} n(f)>0$ for all $f\in \pi\inv(\{g\})$.
    Using $\mathbb{T}$-equivariance of $n$, the same trick as in Remark~\ref{rmk:Weyl twist} shows that the element $e\coloneqq  \overline{n(f)}/|n(f)|\cdot f$ is the unique element of $\E u$  with $n(e)>0$. Thus, $ (m^*n)(\tau)>0$ implies $\overline{m( e \tau\inv)}>0 $. We can now let $\sigma\coloneqq  \tau\inv$. This finishes the proof of \ref{it:II}$\implies$\ref{it:III}.

   For \ref{it:ii}$\implies$\ref{it:iii},  the above argument goes through with every ``$>0$'' replaced by ``$\neq 0$''; in particular, \ref{it:ii} implies that there exist $e\in \E u$  and $\sigma\in \E_{S} (u)$ with $n(e)\neq 0$ and $m(e\sigma)\neq 0$. Since $\suppoG(n)$ and $\suppoG(m)$ intersect $Gu $ in at most one point, this shows that 
    \[
        \suppo(n)\cap \E u  = \mathbb{T}\cdot e
        \quad\text{ and }\quad
        \suppo(m)\cap \E u  = \mathbb{T}\cdot e\sigma.
    \]
    This means exactly that  $q_{\E}(\suppo(n)\cap \E u ) = q_{\E}(\suppo(m)\cap \E u )$ and that this set is not empty, i.e., that \ref{it:iii} holds.

    To show \ref{it:iii}$\implies$\ref{it:i}, we follow the ideas in the proof of \cite[Proposition 4.5]{DGN:Weyl}.
    Let $e\in \suppo(n)\cap \E u$ . By  \ref{it:iii}, there exists $\tau\in \E_{S} (u)$ with $e\tau\inv\in \suppo(m)$. Since $m,n\in N$, we deduce
    \[
    m^*n(\tau) = \overline{m(e\tau\inv)}n(e) \neq 0.
    \]
    Since $\tau\in \E_{S} $ and since $ \E_{S} $ is open in $\E$, there exists an open neighborhood $V$ of $\tau$ in $ \E_{S} $ with $m^*n (\tau')\neq 0$ for all $\tau'\in V$. This implies that $q_{\E}(\suppo(m)\cap \E v ) = q_{\E}(\suppo(n)\cap \E v )\neq\emptyset$ for all $v\in r(V)$.

     Now fix one such $v\in r(V)$. For $f\in \suppo(n)\cap \E v $, the above asserts that there exists $\sigma\in  \E_{S} (v)$ such that $f\sigma \in \suppo(m)\cap \E v $.
    Since \(f
        \HleftX \mu =
        (f\sigma)
        \HleftX \mu\)  
    for all $\mu\in \twPD(v)$,
    Lemma~\ref{lem:DGN Proposition 4.4}\ref{item:DGN Proposition 4.4(2)} explains the equations marked with $(\dagger)$ in the following:
    \[
        \alpha_{n}(\intA{\mu}) \overset{(\dagger)}{=} \intA{
        (f
        \HleftX \mu)}=\intA{
        (f\sigma)
        \HleftX \mu}
        \overset{(\dagger)}{=} \alpha_{m}(\intA{\mu}).
    \]
    Since $\intA{\mu}$ is an arbitrary element of the image $U$ of \(P\inv \left( r(V)\right)\) under $\sharp$, this proves that $\alpha_n$ and $\alpha_m$ coincide on $U$. Since~$G$ is \etale, $r(V)$ is open in $\cU =G\z $. Since $\sharp$ is a homeomorphism (Proposition~\ref{prop:spec B}) and since $P$ is continuous (Definition~\ref{def:twPD}), we deduce that \(
        U\) is an open neighborhood of $\intA{\kappa}$ in $\widehat{B}$.  By definition of the equivalence relation that gives rise to the Weyl groupoid, this means that $[n,\intA{\kappa}]=[ m,\intA{\kappa}]$, as claimed.

    It remains to show \ref{it:III}$\implies$\ref{it:I}, so assume that there exist elements $e\in \E u$  and $\sigma\in \E_{S} (u)$ such that $n(e)>0$, $m(e\sigma)>0$, and $\kappa(\sigma)=1$; we must prove that $\llbracket n,\intA{\kappa}\rrbracket= \llbracket m,\intA{\kappa}\rrbracket$. As explained in the proof of \ref{it:ii}$\implies$\ref{it:iii}, our assumption \ref{it:III} is stronger than \ref{it:iii}, so we may invoke \ref{it:iii}$\implies$\ref{it:i} to conclude $[ n,\intA{\kappa}]= [m,\intA{\kappa}]$. This implies $\llbracket n,\intA{\kappa}\rrbracket= \llbracket zm,\intA{\kappa}\rrbracket$ for a unique $z\in\mathbb{T}$. By \ref{it:I}$\implies$\ref{it:III} applied to this equality in the Weyl twist, there exist elements $f\in \E u$  and $\tau\in \E_{S} (u)$ such that $n(f)>0$, $(zm)(f\tau)>0$, and $\kappa(\tau)=1$. Since $\suppoG(n)$ is a bisection and since $n$ is $\mathbb{T}$-equivariant, it follows from $n(e)>0$ and $n(f)>0$ that $f=e$. Since $\suppoG(zm)=\suppoG(m)$ is likewise a bisection, it follows from $m(z\cdot e\tau)=(zm)(f\tau)>0$  and $m(e\sigma)>0$ that $z\cdot \tau=\sigma$. Thus, $\mathbb{T}$-equivariance of $\kappa$ implies that $z = \overline{\kappa(\tau)}\kappa(\sigma)$, which equals $1$ by the assumption that $\kappa(\sigma)=1=\kappa(\tau)$. This proves that $\llbracket n, \intA{\kappa}\rrbracket=\llbracket zm,\intA{\kappa}\rrbracket$ equals $\llbracket m,\intA{\kappa}\rrbracket$, as needed.
\end{proof}

We need one last lemma.

\begin{lemma}\label{lem:q by cK is open map}
    The quotient map $\actiongpd{\twPD}{\E} \to \Weyltwist = \bigl(\actiongpd{\twPD}{\E}\bigr) /\cK $ is open.
\end{lemma}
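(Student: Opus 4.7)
The plan is to unfold the quotient via the explicit coset description in Remark~\ref{rmk:Weyl twist} and reduce the openness of the quotient map to the openness of an explicit continuous map. The coset of $(e,\kappa)\in\actiongpd{\twPD}{\E}$ equals $\{(\overline{\kappa(\tau)}\cdot e\tau,\kappa):\tau\in\E_S(s(e))\}$, so for every open $U\subseteq\actiongpd{\twPD}{\E}$, the saturation of $U$ under the $\cK$-equivalence is the image
\[
\tilde U\coloneqq\Phi\bigl(U\bfp{s}{r}\E_S\bigr)
\]
of the open subset $U\bfp{s}{r}\E_S$ under the map $\Phi\colon(\actiongpd{\twPD}{\E})\bfp{s}{r}\E_S\to\actiongpd{\twPD}{\E}$, $\Phi((e,\kappa),\tau)\coloneqq(\overline{\kappa(\tau)}\cdot e\tau,\kappa)$. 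Continuity of $\Phi$ follows from continuity of multiplication and of the $\mathbb{T}$-action in $\E$, together with continuity of the evaluation $\E_S\bfp{s}{P}\twPD\to\mathbb{T}$, $(\sigma,\kappa)\mapsto\kappa(\sigma)$, which is immediate from the topology on $\twPD$ in Definition~\ref{def:twPD}. Since a quotient map is open if and only if saturations of opens are open, it suffices to prove that $\Phi$ is an open map.

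To verify openness of $\Phi$, I will argue locally near an arbitrary point $((e_0,\kappa_0),\tau_0)$ of its domain. Using that $G$ is \etale\ and $S$ is open in $G$, pick open bisections $B_e,B_\tau\subseteq G$ with $\pi(e_0)\in B_e$ and $\pi(\tau_0)\in B_\tau\subseteq S$, and continuous sections $\mfs_e\colon B_e\to\E$, $\mfs_\tau\colon B_\tau\to\E_S$ of $\pi$ with $\mfs_e(\pi(e_0))=e_0$ and $\mfs_\tau(\pi(\tau_0))=\tau_0$ (cf.~Lemma~\ref{lem:c_s}). This yields $\mathbb{T}$-equivariant local trivializations $(z_e,t_e)\mapsto z_e\cdot\mfs_e(t_e)$ of $\pi\inv(B_e)\cong\mathbb{T}\times B_e$, and similarly for $B_\tau$. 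A short computation using $\mathbb{T}$-equivariance of $\kappa$ and centrality of the twist shows that, in these coordinates,
\[
\Phi\bigl((z_e\cdot\mfs_e(t_e),\kappa),\,z_\tau\cdot\mfs_\tau(t_\tau)\bigr)=\bigl(z_e\,\overline{\kappa(\mfs_\tau(t_\tau))}\cdot\mfs_e(t_e)\mfs_\tau(t_\tau),\,\kappa\bigr),
\]
so that $\Phi$ is independent of the $z_\tau$-coordinate. Forgetting $z_\tau$ is an open projection, and the induced map $\tilde\Phi(z_e,t_e,t_\tau,\kappa)$ with the displayed formula is then a homeomorphism onto the open set $\{(g,\kappa)\in\actiongpd{\twPD}{\E}:\pi(g)\in B_eB_\tau\}$. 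Indeed, since $G$ is \etale, the product bisection $B_eB_\tau$ is open in $G$ and any $\pi(g)\in B_eB_\tau$ factors uniquely as $\pi(g)=t_et_\tau$ with $t_e\in B_e$, $t_\tau\in B_\tau$, recovering $t_e,t_\tau$ continuously; then $z_e$ is recovered continuously via the scalar factor $\kappa(\mfs_\tau(t_\tau))$, whose continuity in $(t_\tau,\kappa)$ is built into Definition~\ref{def:twPD}. Hence $\Phi$ is open near $((e_0,\kappa_0),\tau_0)$, completing the proof.

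The main obstacle is the $\kappa$-dependent twist $\overline{\kappa(\tau)}$ in the formula for $\Phi$, which couples the $\E$- and $\twPD$-coordinates in a nontrivial way. Continuity of this twist is guaranteed by the topology on $\twPD$, but openness of $\Phi$ hinges on the algebraic cancellation that eliminates the $z_\tau$-variable — a cancellation arising from $\mathbb{T}$-equivariance of $\kappa$ together with centrality of $\iota$. Only after this reduction does the local \etale\ structure of $G$, via uniqueness of factorization in bisections, produce the required local homeomorphism.
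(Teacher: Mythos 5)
Your proof is correct, but it takes a genuinely different route from the paper's. The paper verifies openness of the quotient map via the closed-set criterion: for closed $X$, it shows $\{\xi : q\inv(\xi)\subset X\}$ is closed by a direct net argument, lifting a convergent net of units through the (open) source map of $\E_{S}$ and invoking condition (ii) of Definition~\ref{def:twPD}. You instead use the open-set criterion, observe that the saturation of an open set $U$ is the image of $U\bfp{s}{r}\E_{S}$ under the twisted right-translation map $\Phi\bigl((e,\kappa),\tau\bigr)=(\overline{\kappa(\tau)}\cdot e\tau,\kappa)$, and then prove that $\Phi$ is itself an open map by a local-coordinate computation: the $z_\tau$-variable cancels by $\mathbb{T}$-equivariance of $\kappa$ and centrality, and the residual map is a homeomorphism onto $\{(g,\kappa):\pi(g)\in B_eB_\tau\}$ by unique factorization through the bisections. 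Both arguments rely on the same two ingredients (joint continuity of evaluation $(\sigma,\kappa)\mapsto\kappa(\sigma)$, built into the topology on $\twPD$, and the openness of $\E_{S}$ with its \etale\ structure), but your version proves strictly more — openness of $\Phi$, together with an explicit local trivialization of the saturation — at the cost of coordinate bookkeeping, whereas the paper's net argument is shorter and coordinate-free. One small correction: Lemma~\ref{lem:c_s} concerns \emph{global} sections of $\pi$ and the induced $2$-cocycle; the existence of the continuous \emph{local} sections $\mfs_e,\mfs_\tau$ you need is instead justified by the local triviality of the principal $\mathbb{T}$-bundle $\E\to G$ noted in Subsection~\ref{ssec:twisted algebras}.
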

\begin{proof}
    Denote the quotient map by $q$. Let $X\subset \actiongpd{\twPD}{\E}$ be closed; it suffices to prove  that
    \[Y\coloneqq  \left\{\xi\in \Weyltwist: q\inv(\xi)\subset X\right\}\] is closed.
    By definition of the quotient topology, this claim is equivalent to $q\inv (Y)$ being closed in $\actiongpd{\twPD}{\E}$. Using the description of cosets $(e,\kappa)\cK \in \Weyltwist$ in Equation~\eqref{eq:cosets of cK},
    we see that 
    \begin{align}
        \label{eq:q-inv (Y)}
    q\inv (Y)
    =
    \big\{
        (e,\kappa) \in \actiongpd{\twPD}{\E} : \forall \tau\in \E_{S}(s(e)), (\overline{\kappa(\tau)}\cdot e\tau,\kappa)\in X
    \big\}
    .
    \end{align}
    Now suppose that $(e_i,\kappa_i)_i$ is a net in $q\inv (Y)$ that converges to $(e,\kappa)$ in $\actiongpd{\twPD}{\E}$. To show that $(e,\kappa)\in q\inv(Y)$, fix $\tau\in \E_{S}(s(e))$; we must check that $(\overline{\kappa(\tau)}\cdot e\tau,\kappa)\in X$.
    
    As $\kappa_i\to\kappa$, we have $u_i\coloneqq  P(\kappa_i)\to P(\kappa)=: u$. Since the source map of $\E_{S}$ is open, we can lift (a subnet of) $(u_i)_i$ to a net $\tau_i\in \E_{S}(u_i)$ with $\tau_i\to \tau$; in particular, $\kappa_i(\tau_i)\to\kappa(\tau)$ by definition of the topology on $\twPD$. Since  $(e_i,\kappa_i)\in q\inv (Y)$, we see from Equation~\eqref{eq:q-inv (Y)} that $(\overline{\kappa_i(\tau_i)}\cdot e_i\tau_i,\kappa_i)\in X$. As $X$ is closed, we deduce that the limit $(\overline{\kappa(\tau)}\cdot e\tau,\kappa)$ of this net is also in $X$,
    as needed.
\end{proof}

We are now ready to prove our description of the Weyl twist  of the non-traditional Cartan pair $B=\csr(S; \E_{S} ) \subset A=\csr(G;\E)$:

\begin{proof}[Proof of Theorem~\ref{thm:Weyl twist}]
    Fix $(e,\kappa)\in \actiongpd{\twPD}{\E} $ and let $u\coloneqq  s(e)=P(\kappa)$. Since we chose $n$ such that $n(e)>0$,  it follows from Lemma~\ref{lem:DGN Proposition 4.4}\ref{item:DGN Proposition 4.4(1)} that $\intA{\kappa}\in\dom(n)$, so $\llbracket n, \intA{\kappa} \rrbracket$ is indeed a well-defined element of $\Sigma_{B\subset A}$. If $m\in N$ is another element with $m(e)>0$, then since $\kappa(u)=1$, it follows from Proposition~\ref{prop:Weyl explained}, \ref{it:III}$\implies$\ref{it:I}, that $\llbracket 
        n, \intA{\kappa}
     \rrbracket=\llbracket 
        m, \intA{\kappa}
     \rrbracket$, which proves that $\tilde{\Delta}(e,\kappa)$ does not depend on the choice of $n$.

    \smallskip
    
   The proof that $\tilde{\Delta}$ is a homomorphism is similar to the proof of \cite[Lemma 4.7]{DGN:Weyl}: Suppose that $( f ,\mu)$ and $( e ,\kappa)$ are composable in $\actiongpd{\twPD}{\E} $, that is, $\mu= e \HleftX \kappa$, so that
   \[
   ( f , e \HleftX \kappa)( e ,\kappa)
    =
    ( fe ,\kappa).
   \]
   Now suppose that $n,m\in N$ are such that $n(e),m(f)>0$, and thus
   \begin{align*}
         \tilde{\Delta} ( f , e \HleftX \kappa) = 
         \llbracket
            m, \intA{e \HleftX \kappa}
        \rrbracket
        \quad\text{ and }\quad
         \tilde{\Delta} ( e, \kappa) = 
         \llbracket
            n, \intA{\kappa}
        \rrbracket
    \end{align*}
    and hence, by definition of the groupoid structure on $\Sigma_{B\subset A}$,
    \begin{align*}
        \tilde{\Delta} ( f , e \HleftX \kappa)\tilde{\Delta} ( e ,\kappa)
        =
        \llbracket
        mn,\intA{\kappa}
        \rrbracket.
   \end{align*}
   Thus, to see that $\tilde{\Delta} ( f , e \HleftX \kappa)\tilde{\Delta} ( e ,\kappa)=\tilde{\Delta} ( fe ,\kappa)$, it suffices to note that $mn\in N$ and that $mn(fe)=m(f)n(e)>0$.

    \smallskip

   As explained earlier,  $\tilde{\Delta}$ is surjective since every element of the Weyl twist can be written as $\llbracket n,x\rrbracket$ for some $n\in N$ and some $x\in \dom(n)\subset \widehat{B}$ (\cite[Proposition 4.1]{DGNRW:Cartan}, \cite{BG:2022:DGNRW}, \cite[Lemma 6.13.]{DL:ZStwist-pp}) and since every element of $\widehat{B}$ is of the form $\intA{\kappa}$ (Proposition~\ref{prop:spec B}).

    \smallskip

   The proof that $\tilde{\Delta}$ is  continuous and open is similar to the proof of \cite[Lemma 4.9]{DGN:Weyl}: First, for continuity, suppose that $( e_{i},\kappa_{i})_{i}$ is a net in $\actiongpd{\twPD}{\E}$ that converges to $( e ,\kappa)$. Let $n\in N$ be such that $n(e)>0$, so that $\tilde{\Delta}( e ,\kappa)=\llbracket 
        n, \intA{\kappa}
    \rrbracket $. Since $\suppo(n)$ is open, there exists $i_0$ such that, for all $i\geq i_0$, we have $e_{i}\in \suppo(n)$; if we let $z_{i}\in \mathbb{T}$ be the unique element such that $(z_{i}n)(e_{i})=n(z_{i}\cdot e_{i})>0$, then 
   \[
    \tilde{\Delta} ( e_{i},\kappa_{i}) = 
    \llbracket 
        z_{i} n, \intA{\kappa_{i}}
    \rrbracket 
    \]
    by definition of $\tilde{\Delta}$.  Since $\mathbb{T}$ is compact, and since it suffices to prove that \emph{a subnet} of $(\tilde{\Delta}(e_{i},\kappa_{i}))_{i}$ converges to $\tilde{\Delta}(e,\kappa)$, we can without loss of generality assume that the net $(z_{i})_{i}$ converges to, say, $z\in\mathbb{T}$. Since $n(z_{i}\cdot e_{i})>0$ for all $i$, continuity of $n$ implies that $z n(e)=n(z\cdot e)>0$. Since $n(e)>0$ by assumption, we conclude that the limit $z$ of $(z_{i})_{i}$ must be equal to $1$, meaning that $z_{i}n \to n$.
    Since $\kappa_{i}\to\kappa$ and since $\sharp$ is a homeomorphism, we have $\intA{\kappa_{i}}\to\intA{\kappa}$. The definition of the topology on $\Sigma_{B\subset A}$ then shows that
    \[
    \tilde{\Delta} ( e_{i},\kappa_{i}) = 
    \llbracket 
        z_{i} n, \intA{\kappa_{i}}
    \rrbracket 
    \to
    \llbracket 
        n, \intA{\kappa}
    \rrbracket 
    =    
    \tilde{\Delta} ( e ,\kappa).
    \]
    This proves that $\tilde{\Delta}$ is continuous.

    \smallskip

    To see that $\tilde{\Delta}$ is open, assume that $(\xi_{i})_{i}$ is a net in $\Sigma_{B\subset A}$ that converges to an element $\tilde\Delta(e,\kappa)$; we must show that there exists a subnet $(\xi_{f(j)})_{j}$ and net $(\eta_{j})_j$ in $\actiongpd{\twPD}{\E} $ that converges to $(e,\kappa)$ and   satisfies $\tilde\Delta(\eta_{j})=\xi_{f(j)}$.

    Let $u_{i}\coloneqq  s(\xi_{i})$ and $u\coloneqq  s(e)=P(\kappa)$, and fix  $n\in N$ with $n(e)>0$, so that $\tilde\Delta(e,\kappa)=\llbracket n, \intA{\kappa}\rrbracket$.  Since $\sharp$ is a homeomorphism, convergence $\xi_{i}\to\xi$ in $\Sigma_{B\subset A}$ implies that, for any open neighborhood $U\subset \sharp\inv(\dom(n))$ of $\kappa$ and $V\subset\mathbb{T}$ of $1$, there exists $i_{U,V}$ such that for all $i\geq i_{U,V}$,
    \[
        \xi_{i}
        =
            \llbracket 
                z_{i} n, \intA{\kappa_{i}}
            \rrbracket 
            \quad\text{ for unique }
            z_{i}\in V\quad\text{ and }\quad \kappa_{i}\in U
        .
    \]
    Going forward, we will only consider $i\geq i_{\sharp\inv(\dom(n)),\mathbb{T}}$, and we let $z_i\in\mathbb{T}$ and $\kappa_i\in \sharp\inv(\dom(n))$ be as above.
     As $\intA{\kappa_{i}}\in\dom(n)$, we have 
    $\suppo(n)\cap \E u_{i} \neq\emptyset$ by Proposition~\ref{lem:DGN Proposition 4.4}\ref{item:DGN Proposition 4.4(1)},  so 
    we may let $e_{i}$ be the unique element of $\E u_{i} $ such that $z_{i} n (e_{i})>0$. By choice of $e_{i}$, we have 
    \[
        \tilde{\Delta}(e_{i},\kappa_{i}) = \xi_{i}
        \quad\text{ and }\quad
        \tilde\Delta(e,\kappa) = \xi,
    \]
    so it suffices to show that a subnet  of $(e_{i},\kappa_{i})_{i}$ converges to $(e,\kappa)$.
    
    Firstly, since $z_{i}\in V$ and $\kappa_{i}\in U$ for $i\geq i_{U,V}$, we see that $z_{i}\to 1$ and $\kappa_{i}\to \kappa$. As $s(e_{i})=u_{i}=s(\xi_i)=P(\kappa_{i})$, the latter convergence implies that $s(e_{i})\to P(\kappa)=u=s(e)$. Since $\suppoG(n)$ is a bisection that contains $\pi(e_{i})$ and $\pi(e)$, we deduce that $\pi(e_{i})\to \pi(e)$. Since $\pi$ is open and since we may pass to subnets, we can without loss of generality assume that there exists $f_{i}\in \E$ with $\pi(f_{i})=\pi(e_{i})$ and with $f_{i}\to e$. The former implies that $e_{i}=w_{i}\cdot f_{i}$ for some $w_{i}\in\mathbb{T}$.
    Since $\mathbb{T}$ is compact, we can (again by passing to a subnet) without loss of generality assume that $w_{i}\to w$ for some $w\in\mathbb{T}$. By continuity of $n$ and since $z_i\to 1$, we get
    \[
        z_{i} n(e_{i}) = n(z_{i}\cdot e_{i}) = n(z_{i}w_{i} \cdot f_{i}) \to n(w\cdot e)=w n(e).
    \]
    As $z_{i} n(e_{i})>0$ by choice of $e_{i}$, it follows that $w n(e)>0$. This implies $w=1$ because $n(e)>0$. Therefore, $e_{i} = w_{i}\cdot f_{i} \to w\cdot e = e$, finishing our proof that $\tilde{\Delta}$ is open.

    \smallskip

    Next, we compute the kernel of $\tilde{\Delta}$. By definition of the Weyl twist, an element $\llbracket n, \intA{\kappa}\rrbracket$ is in the unit space if and only if there exist $b,b'\in B$ such that $\intA{\kappa}(b)$ and $\intA{\kappa}(b')$ are positive and $nb=b'$. Since the conditional expectation $\Phi$ fixes $B$ and is $B$-linear, and since $\intA{\kappa}\colon B\to\mathbb{C}$ is a homomorphism, it follows that $b'=\Phi(b')=\Phi(nb)=\Phi(n)b$ and hence
    \begin{align*}
        \intA{\kappa}(b') = \intA{\kappa}(\Phi(n)b) = \intA{\kappa}(\Phi(n))\intA{\kappa}(b).
    \end{align*}
    As both $\intA{\kappa}(b')$ and $\intA{\kappa}(b)$ are positive, we conclude that $\intA{\kappa}(\Phi(n))$ is positive. As $\Phi(n)=n|_{ \E_{S} } \in C_c(S; \E_{S} )$, we may use the formula at~\eqref{eq:phi_kappa}: if $u\coloneqq  P(\kappa)$, then
    \begin{equation}\label{eq:determining n}
        \sum_{   \{ \pi(\sigma)\in S(u) \}   }\overline{\kappa(\sigma)} n( \sigma)
         \overset{\eqref{eq:phi_kappa}}{=} 
        \intA{\kappa}(\Phi(n))
        >0.
    \end{equation}
    Therefore, if an element $(e,\kappa)\in \actiongpd{\twPD}{\E} $ is mapped into the unit space of $\Sigma_{B\subset A}$ by $\tilde{\Delta}$, then any $n\in N$ with $n(e)>0$ satisfies~\eqref{eq:determining n}. Since $\suppoG(n)$ is a bisection and since $u=P(\kappa)=s(e)$,~\eqref{eq:determining n} implies that $e\in  \E_{S} $ and $\overline{\kappa(e)}n(e)>0$, i.e., $\kappa(e)=1$. Therefore, the kernel of $\tilde{\Delta}$ is contained in $\cK $.

  Conversely, let $(\sigma,\kappa)\in \cK $. Since~$S$ is clopen, it follows from \cite[Proposition 3.12]{DWZ:2025:Twist} that $\csr(S; \E_{S} )\cap N \subset C_c(S; \E_{S} )$, and if $f\in \csr(S; \E_{S} )\cap N$ is such that $f(\sigma)>0$, then 
    $\tilde{\Delta}(\sigma,\kappa)=\llbracket f, \intA{\kappa}\rrbracket$. Moreover,  since $\suppoG(f)$ is a bisection, we have
    \[
        \intA{\kappa} (f) \overset{\eqref{eq:phi_kappa}}{=}  \sum_{\{\pi(\tau)\in S(u)\}}\overline{\kappa(\tau)} f( \tau) = \overline{\kappa(\sigma)} f(\sigma) = f(\sigma)> 0,
    \]
    so that $\llbracket f, \intA{\kappa}\rrbracket$ is a unit.

    By \cite[Proposition 9.]{AMP:IsoThmsGpds}, $\cK $ is a normal subgroupoid of $\actiongpd{\twPD}{\E} $ and by  \cite[Theorem 1]{AMP:IsoThmsGpds}, $\tilde{\Delta}$ descends to the (algebraic) isomorphism $\Delta$.
    Since the quotient map $\actiongpd{\twPD}{\E} \to \Weyltwist$ is continuous, the fact that $\tilde{\Delta}$ is open implies that $\Delta$ is open. Similarly, since the quotient map is open (Lemma~\ref{lem:q by cK is open map}), continuity of $\tilde{\Delta}$ implies continuity of $\Delta$  with an application of Fell's criterion.
\end{proof}

Our description of the Weyl groupoid now follows easily:
\begin{proof}[Proof of Theorem~\ref{thm:Weyl groupoid}]
    First, let us check that $\delta$ is well defined: Fix $(\dot{e},\kappa)\in \Weylgpd $, let $e$ be an arbitrary lift of $\dot{e}$ under the quotient map $\E\to \E/\E_{S}$, and let $u\coloneqq  s(\dot{e})=P(\kappa)$. Since $n\in N$ is chosen such that $\dot{e}\in q_{\E}(\suppo(n))$, there exists $\tau\in \E_{S} (u)$ with $n(e\tau)> 0$. This implies that
    \[
    n^*n (u)
    =
    n^*n \bigl( (e\tau)\inv (e\tau)\bigr)
    =
    |n(e\tau)|^{2} \neq 0.
    \]
    Thus,
    \[
    \intA{\kappa}(n^*n)
    =
    \sum_{\{\pi(\sigma)\in S(u)\}}\overline{\kappa(\sigma)}\,(n^*n)(\sigma)
    =
    (n^*n)(u)
    \neq 0,
    \]
    which shows that $\intA{\kappa}\in\dom(n)$, so $[n, \intA{\kappa}]$ is an element of $W_{B\subset A}$.
   If $m\in N$ is another element with $\dot{e}\in q_{\E}(\suppo(m))$, then Proposition~\ref{prop:Weyl explained}, \ref{it:iii}$\implies$\ref{it:i}, ensures that $[m, \intA{\kappa}]=[n, \intA{\kappa}]$, which proves that $\delta(\dot{e},\kappa)$ does not depend on the choice of $n$.

 Let 
    $ \tilde{\Delta}\colon\actiongpd{\twPD}{\E}  \to \Sigma_{B\subset A}$ denote the continuous, open, surjective homomorphism from Theorem~\ref{thm:Weyl twist}, and let $\Pi\colon \Sigma_{B\subset A}\to W_{B\subset A}$ denote the quotient/projection map, i.e., $\Pi\llbracket n,\intA{\kappa}\rrbracket = [n,\intA{\kappa}]$. Note that $\delta$  and $\tilde{\Delta}$ are defined in such a way that
    \begin{equation}\label{eq:delta in terms of the others}
        \delta(\dot{e},\kappa) = (\Pi\circ \tilde{\Delta})(e,\kappa).
    \end{equation}
    Using Equation~\eqref{eq:delta in terms of the others}, we can deduce several properties of $\delta$ without much effort: Since $\Pi$ and $\tilde{\Delta}$ are surjective groupoid homomorphisms, so is $\delta$.
    Since $\Pi$ and $\tilde{\Delta}$ are continuous and since the quotient map $\E\to \E/ \E_{S} $ is open (Lemma~\ref{lem:quotients}\ref{item:quotient map open}), $\delta$ is continuous by an application of Fell's criterion. Conversely, since $\Pi$ and $\tilde{\Delta}$ are open and since the quotient map $\E\to \E/ \E_{S} $ is continuous, $\delta$ is open, likewise using Fell's criterion. 

    The proof that $\delta$ is injective is as done in  \cite[Lemma 4.8]{DGN:Weyl}; one only needs to exchange the reference to \cite[Lemma 3.2]{DGN:Weyl} by  one to Proposition~\ref{prop:spec B}, and to \cite[Proposition 4.5]{DGN:Weyl} by Proposition~\ref{prop:Weyl explained}, \ref{it:i}$\implies$\ref{it:iii}.
\end{proof}

Theorem~\ref{thm:A} now follows as a combination of Theorems~\ref{thm:Weyl groupoid} and~\ref{thm:Weyl twist}.

\subsection{Corollaries and applications of  Theorems~\ref{thm:Weyl groupoid} and~\ref{thm:Weyl twist}}\label{ssec:corollaries:A}

\begin{cor}\label{cor:effective and top princ}
    Suppose
        $\E$ is a twist over a \LCH, \etale\ groupoid~$G$; $S$ is a clopen normal subgroupoid of~$G$ with $S\z=G\z$; and~$S$ satisfies the conditions~\eqref{eq:thm:DWZ:maximal}  and~\eqref{eq:thm:DWZ:ricc} of Theorem~\ref{thm:DWZ}.     
    Then the
    groupoid 
    $\Weylgpd $ is effective. If~$G$ is second countable, then the action of $G/S$ on $\twPD$ described in Corollary~\ref{cor:G/S:HleftX is an action} is topologically free.
\end{cor}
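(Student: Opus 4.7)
The plan is to exploit the identification provided by Theorem~\ref{thm:Weyl groupoid} together with a standard fact from Kumjian--Renault theory: the Weyl groupoid of any Cartan pair is automatically effective. This is essentially built into the theory, since $C_0(\widehat{B})$ is maximal abelian in $\csr(W_{B\subset A};\Sigma_{B\subset A})$, which forces $W_{B\subset A}$ to be effective by the easy direction of \cite[Theorem 5.2]{Renault:Cartan} (or by inspection of the Weyl groupoid construction). Combining this with the topological groupoid isomorphism $\Weylgpd \cong W_{B\subset A}$ of Theorem~\ref{thm:Weyl groupoid} yields the first assertion.

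For the second assertion, I would argue as follows. Assume $G$ is second countable. Then so are $\E$, the subgroupoid $\E_{S}$, and the quotient $\E/\E_{S}\cong G/S$ (quotients of second countable spaces by open equivalence relations are second countable). The $\cs$-algebra $B=\csr(S;\E_{S})$ is then separable, so its spectrum $\widehat{B}\cong\twPD$ is second countable by Proposition~\ref{prop:spec B}. Hence the action groupoid $\Weylgpd$ is a second countable, locally compact Hausdorff, \etale\ groupoid. For such groupoids, the notions of effective and topologically principal coincide (\cite[Proposition 3.6]{Renault:Cartan}, recorded in the remark after Definition~\ref{def:gpd adjectives}). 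Applying this to $\Weylgpd$ and then invoking Remark~\ref{rmk:free=principal etc}, which translates topological principality of the action groupoid into topological freeness of the underlying action, gives the claim.

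The only place where there is anything to verify beyond bookkeeping is the second countability of $\twPD$. The most direct route is the separability argument sketched above; an alternative would be to describe a countable basis for the topology of $\twPD$ directly from a countable basis of $\E_{S}$ using Definition~\ref{def:twPD}, but this is unnecessary once we have Proposition~\ref{prop:spec B} at our disposal. Everything else is a clean appeal to previously established results, so no significant obstacle is expected.
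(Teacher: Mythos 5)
Your proposal is correct and follows essentially the same route as the paper: identify $\Weylgpd$ with the Weyl groupoid via Theorem~\ref{thm:Weyl groupoid}, invoke the automatic effectiveness of the Weyl groupoid of a Cartan pair (the paper cites \cite[Theorem 1.2]{Raad:2022:Renault} for this, which covers the non--second countable case), and then pass from effective to topologically principal via \cite[Proposition 3.6]{Renault:Cartan} and Remark~\ref{rmk:free=principal etc}. Your explicit verification that $\twPD$, and hence $\Weylgpd$, is second countable is a detail the paper leaves implicit, but it does not change the argument.
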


\begin{proof}
    By Theorem~\ref{thm:DWZ}, $\csr(S;\E_{S})$ is a Cartan subalgebra of $\csr(G;\E)$. Since Renault's Weyl groupoid of a Cartan pair is effective \cite[Theorem 1.2]{Raad:2022:Renault}, Theorem~\ref{thm:Weyl groupoid} implies that $\Weylgpd $ is effective. If~$G$ is second countable, then by \cite[Proposition 3.6]{Renault:Cartan}, this means that $\Weylgpd $ is topologically principal which means that the action of $G/S$ on $\twPD$ is topologically free (Remark~\ref{rmk:free=principal etc}).
\end{proof}

\begin{corollary}[{cf.\ \cite[Proposition 3.1]{IKRSW:2021:Extensions}, \cite[Lemma 5.1]{Renault:2023:Ext}}]\label{cor:Cartan pair iso}
    In the setting of Assumption~\ref{as:E,G,S,v2} with  $B=\csr(S; \E_{S} )\subset A=\csr(G;\E)$ the associated non-traditional Cartan pair, the following diagram commutes:
    \begin{equation}\label{diag:twist}
        \begin{tikzcd}[row sep=tiny, 
        column sep = large]
        (\overline{z},\intA{\kappa})\ar[r, phantom, "\in"]&[-35]\mathbb{T}\times \widehat{B} \arrow[r,hook, "\iota"] &\Sigma_{B\subset A}  \arrow[r,two heads, "\Pi"]&
            W_{B\subset A}
        \\[20pt]
        &\mathbb{T}\times \twPD \arrow[r,hook]
        \ar[u] &\Weyltwist  \arrow[r,two heads]\ar[u, "\Delta"]&
            \Weylgpd \ar[u, "\delta"]
        \\
        (z,\kappa)\ar[rr,mapsto]\ar[uu,mapsto]\ar[ru, phantom, "{\text{\rotatebox[origin=c]{45}{$\in$}}}"]&  & \bigl( z\cdot P(\kappa),\kappa\bigr)\cK 
        \\
        && (e,\kappa)\cK \ar[r,mapsto] &(\dot{e},\kappa)
        \end{tikzcd}
    \end{equation}
    In other words, $\Weyltwist$ is   a twist over the groupoid $\Weylgpd $ that is isomorphic to the Weyl twist $\Sigma_{B\subset A}\to W_{B\subset A}$.  In particular, there 
     exists a *-isomorphism
    \begin{equation}\label{eq:iso between reduced}
        \csr(G;\E)
        \overset{\cong}{\longrightarrow}
        \csr
        \bigl(
            \Weylgpd  ;
            \Weyltwist 
        \bigr)
    \end{equation}
    that maps the Cartan subalgebra $\csr(S; \E_{S} )$ onto $C_0(\twPD)$.
\end{corollary}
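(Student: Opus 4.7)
The plan is to verify commutativity of the diagram~\eqref{diag:twist} using Theorems~\ref{thm:Weyl groupoid} and~\ref{thm:Weyl twist}, and then to deduce both the twist structure on $\Weyltwist\to\Weylgpd$ and the *-isomorphism~\eqref{eq:iso between reduced} from general Kumjian--Renault theory. Commutativity of the right square will be immediate from Equation~\eqref{eq:delta in terms of the others} together with the observation that for any $n\in N$ with $n(e)>0$ one has $\dot{e}\in q_{\E}(\suppo(n))$: then both routes around the right square send $(e,\kappa)\cK$ to $[n,\intA{\kappa}]$ (via the definitions of $\delta$ and of $\Pi\circ\tilde{\Delta}$).

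The main computation, and the step I expect to require the most care, is commutativity of the left square. Given $(z,\kappa)\in\mathbb{T}\times\twPD$ with $u\coloneqq P(\kappa)$, one must verify that $\Delta\bigl((z\cdot u,\kappa)\cK\bigr)=\iota(\overline{z},\intA{\kappa})$. The strategy will be to exhibit a single normalizer $n\in N$ that realizes both sides at once. Since $\E_{S}$ is open and $z\cdot u\in\E_{S}$, one can select a small open bisection $U\subset\E_{S}$ containing $z\cdot u$ and build a $\mathbb{T}$-equivariant $n\in C_c(G;\E)$ with $\suppo(n)\subset\mathbb{T}\cdot U$ and $n(z\cdot u)=1$; by \cite[Proposition 3.12]{DWZ:2025:Twist} this $n$ actually lies in $C_c(S;\E_{S})\subset B$. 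On the one hand, $\tilde{\Delta}(z\cdot u,\kappa)=\llbracket n,\intA{\kappa}\rrbracket$ directly from Theorem~\ref{thm:Weyl twist}. On the other hand, because $\suppoG(n)$ is a bisection in $S$ meeting $S(u)$ only at~$u$, the sum defining $\intA{\kappa}(n)$ in Equation~\eqref{eq:phi_kappa} collapses to a single term; choosing the lift $z\cdot u$ of $u$ and using $\kappa(z\cdot u)=z$ (which is built into the definition of $\twPD$) yields $\intA{\kappa}(n)=\overline{z}$. Thus $n$ may serve as $b_{\overline{z},\intA{\kappa}}$ in the defining formula~\eqref{eq:def:iota} of $\iota$, so $\iota(\overline{z},\intA{\kappa})=\llbracket n,\intA{\kappa}\rrbracket$ as well. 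The only subtle point is arranging $n$ to belong to $C_c(S;\E_{S})\cap N$ with the prescribed support, which is routine using openness of $\E_{S}$ and the fact that any bisection of $S$ is automatically a bisection of $G$.

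Once the diagram commutes, all three vertical maps are isomorphisms of topological groupoids (the leftmost via Proposition~\ref{prop:spec B}), so transporting the central $\mathbb{T}$-action across $\Delta$ equips $\Weyltwist\to\Weylgpd$ with the structure of a twist canonically isomorphic to $\Sigma_{B\subset A}\to W_{B\subset A}$. Finally, the *-isomorphism~\eqref{eq:iso between reduced} follows by composing two known isomorphisms: Renault's reconstruction theorem \cite[Theorem 5.9]{Renault:Cartan} produces a Cartan-preserving *-isomorphism $\csr(G;\E)\cong \csr(W_{B\subset A};\Sigma_{B\subset A})$ sending $B$ onto $C_0(\widehat{B})$, and the isomorphism of twists just established, together with the homeomorphism $\sharp$ of Proposition~\ref{prop:spec B}, identifies the right-hand side with $\csr(\Weylgpd;\Weyltwist)$ in a way that carries $C_0(\widehat{B})$ onto $C_0(\twPD)$.
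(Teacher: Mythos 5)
Your proposal is correct and follows essentially the same route as the paper: the right square via Equation~\eqref{eq:delta in terms of the others}, the left square by producing a single $n\in N\cap B$ with $n(z\cdot P(\kappa))=1$ and computing $\intA{\kappa}(n)=\overline{z}$ so that $n$ serves as $b_{\overline{z},\intA{\kappa}}$ in~\eqref{eq:def:iota}, and the twist structure and Cartan-preserving *-isomorphism then imported from Kumjian--Renault theory. The paper compresses the last step to a citation of the proof of \cite[Corollary 5.5]{DGN:Weyl}, which is exactly the composition of Renault's reconstruction theorem with the twist isomorphism that you spell out.
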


\begin{proof}
    The right-hand square commutes by definition of $\delta$; see also Equation~\eqref{eq:delta in terms of the others}. For the left-hand square, fix $(z,\kappa)\in \mathbb{T}\times\twPD$.
    Since $u\coloneqq  P(\kappa)\in \cU\subset \E_{S}$, we can find an element $n\in N\cap B$ such that $n(z\cdot u)=1$, so that
    \[
        \tilde\Delta(z\cdot u,\kappa)
        =
        \llbracket
            n,\intA{\kappa}
        \rrbracket
    \]
    by definition of $\tilde{\Delta}$.
    Then
    \[
        \intA{\kappa}(n)
        =
        \sum_{\{\pi(\sigma)\in S(u)\}}\overline{\kappa(\sigma)}n(\sigma)
        =
        \overline{\kappa(z\cdot u)}\,n(z\cdot u)
        =
        \overline{\kappa(z\cdot u)}
        =
        \overline{z \kappa(u)}
        =
        \overline{z}.
    \]
    Since $n\in B$, the definition of $\iota\colon \mathbb{T}\times\widehat{B}\to\Sigma_{B\subset A}$ in~\eqref{eq:def:iota}
    implies that
    \(
    \iota(\overline{z},\intA{\kappa})
    =
        \llbracket
            n,\intA{\kappa}
        \rrbracket
        ,
    \) as claimed.
    
    Since $\Sigma_{B\subset A}$ is known to be a twist over $W_{B\subset A}$, Theorems~\ref{thm:Weyl twist} and~\ref{thm:Weyl groupoid} 
    and commutativity of the diagram
    immediately imply that
    $\Weyltwist \to \Weylgpd $ is a twist that is isomorphic to $\Sigma_{B\subset A}\to W_{B\subset A}$. The existence of the Cartan-preserving isomorphism is proved exactly as \cite[Corollary 5.5]{DGN:Weyl}.
\end{proof}

\begin{remark}
    Borrowing words of Jean Renault (\cite[p.\ 265]{Renault:2023:Ext}), 
    Corollary~\ref{cor:Cartan pair iso} can be viewed as a ``partial Gelfand transform.'' By modding out the maximal open abelian subgroupoid~$S$, we have turned the non-effective~$G$ into an effective groupoid, and the only remaining abelian part is  now just the new groupoid's unit space, $\twPD$. 
\end{remark}
 
\begin{remark}\label{rmk:effective}
    If one assumes that $\Weylgpd$ is second countable and if, somehow, one already knows that it is effective, then the existence of the Cartan-preserving isomorphism of reduced $\cs$-algebras at~\eqref{eq:iso between reduced} follows from \cite[Corollary 3.11]{IKRSW:2021:Pushout}, and so \cite[Theorem 5.9]{Renault:Cartan} implies that 
    $\Weylgpd $ must be the Weyl groupoid and $\Weyltwist$ the Weyl twist. In other words, assuming second countability, Corollary~\ref{cor:effective and top princ} allows one to deduce Corollary~\ref{cor:Cartan pair iso} from the existing literature without invoking Theorem~\ref{thm:Weyl twist}.
    Nevertheless, we believe that the explicit description of the groupoid isomorphisms in Theorems~\ref{thm:Weyl groupoid} and~\ref{thm:Weyl twist} can be valuable for computations even in the second countable case.
\end{remark}
 
\begin{remark}
If $G$ is second countable, then the isomorphism at~\eqref{eq:iso between reduced} is known to exist in greater generality (also for the full $\cs$-algebras); see \cite[Theorem 5.5]{Renault:2023:Ext}, where~$G$ is not assumed to be Hausdorff or \etale, and~$S$ is not maximal in any sense and is merely assumed to be locally closed (rather than clopen).
\end{remark}

\begin{example}[continuation of Examples~\ref{ex:c_theta:setup} and~\ref{ex:c_theta:S=Zx0}]\label{ex:c_theta:Weyl groupoid}
    We return to the rotation algebras $A_{\theta}$ from the view point of a twisted group algebra, that is, $A_{\theta}=\csr(\mathbb{Z}^{2},\cocycle_{\theta})$ where  $\cocycle_{\theta}$ is the $2$-cocycle as at~\eqref{eq:c_theta} on $G=\mathbb{Z}^{2}$; as before, let $\lambda\coloneqq  \mathsf{e}^{2\pi i \theta}$. We will use our results to compute the Weyl groupoid of all Cartan pairs that come from subgroups of $\mathbb{Z}^{2}$. (We will hold off on determining the Weyl twist until Example~\ref{ex:c_theta:trivial Weyl twist}.)
    
    As before, let $\E_{\theta}\coloneqq  \mathbb{T}\times_{\cocycle_{\theta}}G$ be the twist associated to $\cocycle_{\theta}$, and let \[S=\mathbb{Z}\cdot (k,0)\oplus \mathbb{Z}\cdot (m,n)\]
    be a subgroup of~$G$, where $k,m,n\geq 0$; write $\E_{\theta,S}\coloneqq  (\E_{\theta})_{S} = \mathbb{T}\times_{\cocycle_{\theta}|_{S\times S}}S$.  \cite[Example 4.8]{DWZ:2025:Twist} lists conditions on the non-negative integers $k,m,n$ that are necessary and sufficient for~$S$ to give rise to a Cartan subalgebra in $\csr(G,\cocycle_{\theta})$. While these conditions depend on whether or not $\theta$ is rational, they all imply that the $2$-cocycle is given by the following formula when evaluated at elements of~$S$:
    \[
        \cocycle_{\theta}
        \big(l(k,0)+r(m,n),l'(k,0)+r'(m,n)\big)=
        \lambda^{r r' mn}
        .
    \]
    In particular, if we let
    \[
        \coboundary_{\theta}\colon S\to \mathbb{T}, 
        \quad
        \coboundary_{\theta}\big( l(k,0)+r(m,n) \big) \coloneqq   \mathsf{e}^{-\pi i \theta r^{2} mn}
        ,
    \]
    then 
    \[\cocycle_{\theta}( \vec{s} , \vec{t} )\coboundary_{\theta}( \vec{s} + \vec{t} )=\coboundary_{\theta}( \vec{s} )\coboundary_{\theta}( \vec{t} )\]
    for $ \vec{s} , \vec{t} \in S$, proving that $\cocycle_{\theta}|_{S\times S}$ is a coboundary. As mentioned in Example~\ref{ex:coboundary}, this means that the map
    \[
        \mfs\colon S\to \E_{\theta,S}
        \quad\text{given by}\quad
        s\mapsto \bigl( \overline{\coboundary_\theta(s)},s\bigr)
    \]
    is a homomorphic section of $\pi$.
    Therefore, we can identify the twisted dual with the usual Pontryagin dual as explained in Remark~\ref{rmk:twPD for no twist}: an element $\kappa\in\twPD[\E_{\theta}]$ corresponds to the element $\underline{\kappa}\colon S\to\mathbb{T}$ of $\widehat{S}$ defined by
    \begin{equation}\label{eq:underline-kappa}
        \underline{\kappa}(\vec{s}) = \overline{\coboundary_{\theta}(\vec{s})}\kappa(1;\vec{s}),
        \text{ and conversely }
        \kappa(z;\vec{s}) = z \coboundary_{\theta}(\vec{s})\underline{\kappa}(\vec{s}).
    \end{equation}
    
    Using Equation~\eqref{eq:c_theta:conjugation}, one readily verifies that the action of $G$ on $\twPD[\E_{\theta}]$ (Hooptedoodle~\ref{hoop:G also acts on twPD}) translates to the following~$G$-action on $\widehat{S}$:
    \begin{equation}\label{eq:c_theta:HleftX}
        (g_{1},g_{2})\HleftX \underline{\kappa}\colon (s_{1},s_{2})
        \mapsto
        \lambda^{g_{1} s_{2} - g_{2} s_{1}}
        \underline{\kappa}(s_{1},s_{2})
        \quad
        \text{ for } (s_{1},s_{2}) \in S\subset \mathbb{Z}^{2}.
    \end{equation}

    \myparagraph{Case 1: $\theta\notin\mathbb{Q}$} According to \cite[Example 4.8]{DWZ:2025:Twist},~$S$ must be of the form $\mathbb{Z}\cdot (m,n)$ with $\gcd(m,n)=1$. We claim that $\Weylgpd[\widehat{S}]$ is canonically isomorphic to the action groupoid $\actiongpd[\theta]{\mathbb{T}}{\mathbb{Z}}$ from Example~\ref{ex:rotation as transformation}, similarly to what we have already seen in  Example~\ref{ex:c_theta:S=Zx0} for the case $(m,n)=(0,1)$.  Indeed, if we identify $\widehat{S}$ with $\mathbb{T}$ in the obvious way, then the action of $G/S$ becomes
    \begin{equation}\label{eq:c_theta:HleftX for mathbbT}
        (g_{1},g_{2})+S\HleftX w
        \coloneqq  
        \lambda^{g_{1} n - g_{2} m}
        w
        .
    \end{equation}
    Since $\gcd(m,n)=1$, the quotient $G/S$ is isomorphic to $\mathbb{Z}$ via $ (g_{1},g_{2})+S\mapsto g_{1}n - g_{2}m$, and the $G/S$-action on $\widehat{S}$ becomes the $\mathbb{Z}$-action on $\mathbb{T}$ given by
    \(
        d\HleftX w
        =
        \lambda^{d}
        w
    \) 
    for $d\in\mathbb{Z}$, $w\in\mathbb{T}$.
    Using Theorem~\ref{thm:Weyl groupoid}, this proves that the Weyl groupoid of each of the Cartan pairs
    \(
        \csr(\mathbb{Z}\cdot (m,n),\cocycle_{\theta})\subset \csr(\mathbb{Z}^{2},\cocycle_{\theta})
    \) is (isomorphic to) the action groupoid
    $\actiongpd[\theta]{\mathbb{T}}{\mathbb{Z}}$.

    \myparagraph{Case 2: $\theta\in\mathbb{Q}$} Since the case $\theta=0$ is trivial, let us assume $\theta=p/q$ for  $p\in\mathbb{Z}^{\times}$, $q\in \mathbb{N}$, and $\gcd(p,q)=1$. To make our lives easier, let us first argue that we can without loss of generality assume that the integer $m$ in the definition of~$S$ can be chosen to be $0$:
    
    The Smith Normal form of the matrix $\cM\coloneqq  \left[\begin{smallmatrix}k & m \\ 0 & n\end{smallmatrix}\right]$ is given by 
    \[
    \cN\coloneqq  \left[\begin{smallmatrix}k' & 0 \\ 0 & n'\end{smallmatrix}\right],
    \quad\text{ where }\quad k'\coloneqq  \gcd(k,m,n)\text{ and }n'\coloneqq  nk/k'.
    \]
    In particular, there exist $\cP,\cQ\in\mathrm{GL}_{2}(\mathbb{Z})$ such that $\cP\cM\cQ=\cN$. Note that $S=\cM\mathbb{Z}^{2}$ (once we write $\vec{g}\in \mathbb{Z}^{2}=G$ as column- rather than row-vectors) and that, if $T\coloneqq  \cN\mathbb{Z}^{2} \leq G$, then
    \[
    \begin{tikzcd}[row sep = tiny]
        G/T \ar[r]& G/S&\quad\text{ and }\quad&
        S\ar[r]&T
        \\
        \vec{g}+T\ar[r,mapsto]& \det(\cP)\cP\inv \vec{g}+S
         &&
        \vec{s}\ar[r,mapsto]& \cP \vec{s}
    \end{tikzcd}
    \]
    are isomorphisms. 
    Consequently, the given $G/S$-action $\HleftX$ on $\widehat{S}$ induces a (new) action $\XleftG$ of $G/T$ on $\widehat{T}$ if we define for $\vec{g}\in G$ and $\mu \in\widehat{T}$,
    \[
        \vec{g}+T \XleftG \mu 
        \coloneqq  
        \bigl(
        (\det(\cP)\cP\inv \vec{g}) \HleftX (\mu \circ \cP )\bigr) \circ \cP\inv
        . 
    \]
A quick computation using~\eqref{eq:c_theta:HleftX} for $\HleftX$ reveals that for $\mu \in\widehat{T}$ and $\vec{t}=(t_{1},t_{2})$, this formula boils down to
    \begin{align*}
        (\vec{g}+T \XleftG \mu )(\vec{t})
        &=
        \lambda^{g_{1}t_{2}-g_{2}t_{1}}
        \mu (\vec{t})
        ,
    \end{align*}
    which is again just Equation~\eqref{eq:c_theta:HleftX}. In summary: the triple $(G,\cocycle_\theta, S)$ for $S=\mathbb{Z}\cdot (k,0)\oplus \mathbb{Z}\cdot (m,n)$ gives rise to a $G/S$-action $\HleftX$ given by Equation~\eqref{eq:c_theta:HleftX} on $\widehat{S}$ which, in turn, gives rise to a $G/T$-action $\XleftG$ on $\widehat{T}$ for $T=\mathbb{Z}\cdot (k',0)\oplus \mathbb{Z}\cdot (0,n')$ that is on-the-nose identical to the $G/T$-action induced from the triple $(G,\cocycle_\theta,T)$. In other words,
    the map
    \[
        \actiongpd[\HleftX]{\widehat{S}}{G/S}
        \cong
        \actiongpd[\XleftG]{\widehat{T}}{G/T}
        =
        \actiongpd[\HleftX]{\widehat{T}}{G/T}
        \quad\text{ given by }\quad
        (\vec{g}+S, \underline{\kappa})
        \mapsto (\det(\cP)\cP \vec{g}+T, \underline{\kappa}\circ \cP\inv )
    \]
    is an isomorphism of action groupoids. Let us now identify $
        \actiongpd{\widehat{T}}{G/T}$ explicitly:

    Since $T = k'\mathbb{Z} \times n'\mathbb{Z}$, its dual $\widehat{T}$ is homeomorphic to $\mathbb{T}^{2}$: if we identify an element $(w_{1},w_{2})\in\mathbb{T}^{2}$ with the (slightly unusual, but eventually convenient) homomorphism
    \begin{equation}\label{eq:odd choice}
    k'\mathbb{Z} \times n'\mathbb{Z} = T \ni (t_{1},t_{2})
    \mapsto
    w_{1}^{t_{2}/n'}w_{2}^{-t_{1}/k'},
    \end{equation}
    then the action of $G/T$ on $\widehat{T}$ becomes the following action of $\mathbb{Z}/k'\mathbb{Z}\times\mathbb{Z}/n'\mathbb{Z}$ on $\mathbb{T}^2$:
    \begin{align}\label{eq:nice formula thx2odd choice}
        (g_{1}+k'\mathbb{Z},g_{2}+n'\mathbb{Z})\HleftX (w_{1},w_{2})
        &=
        \bigl(
        \lambda^{n'g_{1}}
        w_{1},
        \lambda^{k'g_{2}}
        w_{2}
        \bigr)\notag
        \\
        &=
        \bigl(
        \mathsf{e}^{2\pi ig_{1}\cdot p n' /q}
        w_{1},
        \mathsf{e}^{2\pi ig_{2}\cdot p k'/q}
        w_{2}
        \bigr)
        &&(\lambda=\mathsf{e}^{2\pi i\theta}, \theta=p/q)
    \end{align}

 According to \cite[Example 4.8]{DWZ:2025:Twist} again, $T$ gives rise to a Cartan subalgebra if and only if $k'n'$ is equal to $q$; since $k'n'=kn$, this happens if and only if~$S$ gives rise to a Cartan subalgebra. In this case, the $2$-cocycle is trivial on $T$, and we have  $pn'/q=p/k'$ and $pk'/q=p/n'$. 
  
   In summary, we have shown that the Weyl groupoid for $\csr(\mathbb{Z}^{2},\cocycle_{p/nk})$ with respect to the Cartan subalgebra 
    \(
        \csr(\mathbb{Z}\cdot (k,0)\oplus \mathbb{Z}\cdot (m,n),\cocycle_{p/nk}) 
    \)
    is canonically isomorphic to that for the Cartan
    \(
        \csr(\mathbb{Z}\cdot (k',0)\oplus \mathbb{Z}\cdot (0,n'))
    \)
    for $k'=\gcd(k,m,n)$ and $n'=nk/k'$, and 
    since $pn'/q=p/k'$ and $pk'/q=p/n'$,
    it is given by
    \[
        \bigl(
            \actiongpd[p/k']{\mathbb{T}}{\mathbb{Z}/k'\mathbb{Z}}            
        \bigr)
        \times
        \bigl(
            \actiongpd[p/n']{\mathbb{T}}{\mathbb{Z}/n'\mathbb{Z}}
        \bigr)
        .
    \]
\end{example}

\section{Trivial Weyl twist}\label{sec:trivial Weyl twist}

This section is devoted to identifying when the Weyl twist of a non-traditional Cartan pair is trivializable. That is:
\begin{prop}\label{prop:trivial Weyl twist}
    Suppose $\E$ is a twist over a \LCH, \etale\ 
    groupoid~$G$. Let $S\subset \Iso{G}$ be an open and normal subgroupoid of~$G$ such that $B\coloneqq  \csr(S; \E_{S} )$ is a Cartan subalgebra of $A\coloneqq  \csr(G;\E)$. Then the Weyl twist $\Sigma_{B\subset A}$ is trivializable if and only if there exists a continuous homomorphism
    $F\colon \actiongpd{\twPD}{\E}\to\mathbb{T}$ satisfying 
    \begin{equation}\label{eq:F:tau}
        F(\tau,\kappa)
        =
        \kappa(\tau)
        \quad\text{ for all } \tau\in \E_{S} 
        .
    \end{equation}
    In this case,
    \[
        \csr(G;\E) \cong \csr\bigl(\Weylgpd \bigr)
        \cong
        C_0(\twPD)\rtimes_{\red} G/S .
    \]
\end{prop}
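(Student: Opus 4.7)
The plan is to reduce trivializability of the Weyl twist $\Sigma_{B\subset A}$ to the existence of a continuous ``splitting'' homomorphism into $\mathbb{T}$, and then transport this splitting through the quotient $q\colon \actiongpd{\twPD}{\E}\to \Weyltwist$ from Theorem~\ref{thm:Weyl twist}. The first step is to record the general fact (a direct consequence of Lemma~\ref{lem:c_s}) that for any twist $\mathbb{T}\times \cH\z \hookrightarrow \Sigma \to \cH$, trivializability is equivalent to the existence of a continuous groupoid homomorphism $\tilde{F}\colon \Sigma\to\mathbb{T}$ satisfying $\tilde{F}(\iota(z,x))=z$; the forward direction is by projection onto the $\mathbb{T}$-factor of a trivialization, and the reverse direction uses the formula $\mfs(g)\coloneqq \overline{\tilde{F}(\xi)}\cdot \xi$ (for any lift $\xi$ of $g$) to produce the requisite continuous homomorphic section. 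By Theorem~\ref{thm:Weyl twist} and Corollary~\ref{cor:Cartan pair iso}, $\Sigma_{B\subset A}$ is isomorphic as a twist to $\Weyltwist$ over $\Weylgpd$, so trivializability of $\Sigma_{B\subset A}$ reduces to the existence of such an $\tilde{F}$ on $\Weyltwist$.

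To pass between $F$ on $\actiongpd{\twPD}{\E}$ and $\tilde{F}$ on $\Weyltwist$: given $\tilde{F}$, I would set $F\coloneqq \tilde{F}\circ q$ and verify \eqref{eq:F:tau} by showing that for $\tau\in \E_{S}(u)$, the coset $(\tau,\kappa)\cK$ coincides with $\iota(\kappa(\tau),\kappa)$. Via~\eqref{eq:cosets of cK}, this reduces to the observation that $\tau_{0}\coloneqq \overline{\kappa(\tau)}\cdot \tau$ satisfies $\kappa(\tau_{0})=1$ (so $(\tau_{0},\kappa)\in\cK$) while $\tau=\kappa(\tau)\cdot \tau_{0}$. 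Conversely, given a continuous homomorphism $F$ satisfying \eqref{eq:F:tau}, the definition \eqref{eq:def cK} of $\cK$ makes $F$ identically $1$ on $\cK$, so \cite[Theorem 1]{AMP:IsoThmsGpds} yields an induced algebraic homomorphism $\tilde{F}\colon \Weyltwist\to\mathbb{T}$; continuity follows from continuity of $F$ and the fact that $q$ is an open quotient (Lemma~\ref{lem:q by cK is open map}), while $\tilde{F}(\iota(z,\kappa))=F(z\cdot u,\kappa)=\kappa(z\cdot u)=z$ by $\mathbb{T}$-equivariance of $\kappa$.

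For the final isomorphism chain I would invoke Corollary~\ref{cor:Cartan pair iso} to obtain $\csr(G;\E)\cong \csr(\Weylgpd;\Weyltwist)$, remove the now-trivializable twist to obtain $\csr(\Weylgpd)$, and finally use the standard identification of the reduced $\cs$-algebra of the action groupoid $\actiongpd{\twPD}{(G/S)}$ with the reduced crossed product $C_{0}(\twPD)\rtimes_{\red} G/S$. I expect the main obstacle to be the identification $(\tau,\kappa)\cK=\iota(\kappa(\tau),\kappa)$: while it is a small computation, it has to thread carefully through the description of $\iota$ in diagram \eqref{diag:twist} and the coset description at \eqref{eq:cosets of cK}, and it is the one place where the interplay between the $\mathbb{T}$-action on $\E$ and the evaluation $\kappa(\tau)$ is genuinely used. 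Beyond this, the argument is essentially definition-chasing, with continuity of the descended map passing cleanly through the quotient thanks to openness of $q$.
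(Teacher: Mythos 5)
Your proof is correct, and it reaches the same two anchor points as the paper — Corollary~\ref{cor:Cartan pair iso} to replace $\Sigma_{B\subset A}$ by $\Weyltwist$, and Lemma~\ref{lem:c_s} to characterize trivializability via continuous homomorphic sections — but it organizes the key correspondence differently. The paper proves Lemma~\ref{lem:existence of mft}, a direct bijection between sections $\mft$ of $\tilde{\pi}\colon\Weyltwist\to\Weylgpd$ and maps $F$ on $\actiongpd{\twPD}{\E}$ satisfying the twisted-equivariance condition $F(e\tau,\kappa)=F(e,\kappa)\kappa(\tau)$, with homomorphy and continuity matched on both sides; the continuity transfer there is done by hand with net arguments through the open quotient of Lemma~\ref{lem:q by cK is open map}. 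You instead interpose the ``$\mathbb{T}$-splitting'' $\tilde{F}\colon\Weyltwist\to\mathbb{T}$ with $\tilde{F}\circ\iota=\mathrm{pr}_{\mathbb{T}}$, observe (correctly, and it is worth writing out, since it is not stated in the paper) that such splittings correspond bijectively to continuous homomorphic sections via $\mfs(g)=\overline{\tilde{F}(\xi)}\cdot\xi$, and then pull $\tilde{F}$ back and forth along $q$. This buys you cleaner continuity arguments — everything reduces to the standard fact that a map out of an open quotient is continuous iff its composition with $q$ is — at the cost of the extra (easy) equivalence in your first step; note also that your identification $(\tau,\kappa)\cK=\iota(\kappa(\tau),\kappa)$, which you correctly flag as the crux, is exactly the computation the paper performs inside the proof of Corollary~\ref{cor:Cartan pair iso}, so it is consistent with the stated form of $\iota$ in Diagram~\eqref{diag:twist}. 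One small caveat: your appeal to \cite[Theorem 1]{AMP:IsoThmsGpds} for descending $F$ is not quite the right citation (that result concerns quotients by normal subgroupoids in the isomorphism-theorem sense), but the descent is elementary here since $F$ is constant on $\cK$-cosets by~\eqref{eq:cosets of cK} and the homomorphism property, so nothing is lost. The final chain of $\cs$-isomorphisms matches the paper's.
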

It should be pointed out that it might not be particularly easy to find such a function $F$; rather, the upshot of Proposition~\ref{prop:trivial Weyl twist} is that the characterization of when $\Sigma_{B\subset A}$ is trivializable has been phrased entirely in terms of the original data $\atgext$.

The main tool needed to prove Proposition~\ref{prop:trivial Weyl twist} is the following lemma, where
\begin{equation}\label{eq:tilde pi}
    \tilde{\pi}\colon \Weyltwist\to\Weylgpd
    \quad
    \text{denotes the map}
    \quad
    (e,\kappa)\cK\mapsto (\dot{e},\kappa)
\end{equation}
from Diagram~\eqref{diag:twist}.
\begin{lemma}\label{lem:existence of mft}
    There is a one-to-one correspondence between sections  $\mft$  of $\tilde{\pi}$ and maps $F\colon \actiongpd{\twPD}{\E}\to\mathbb{T}$ satisfying
    \begin{equation}\label{eq:F:tau,v2}
        F(e\tau,\kappa)
        = F(e,\kappa)
        \kappa(\tau)
        \quad\text{ for all } \tau\in \E_{S}.
    \end{equation}
    The relationship between the maps is given by the formula
    \begin{align}\label{eq:mft and F}
        \mathfrak{t}(\dot{e},\kappa)        
        =
        ( \overline{F(e,\kappa)}\cdot e , \kappa )\cK 
        .
    \end{align}
    Moreover, $\mft$ is homomorphic if and only if $F$ is homomorphic, and $\mft$ is continuous if and only if $F$
    is continuous.
\end{lemma}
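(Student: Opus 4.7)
The plan is to exhibit the bijection explicitly via formula~\eqref{eq:mft and F}, and then translate each additional property. The key structural observation is that, for each $(\dot e, \kappa) \in \Weylgpd$, the fiber $\tilde{\pi}\inv(\dot e, \kappa) \subset \Weyltwist$ is a $\mathbb{T}$-torsor via the map $z \mapsto (z\cdot e, \kappa)\cK$: if $(z\cdot e, \kappa)\cK = (e,\kappa)\cK$, then there is $\sigma \in \E_{S}$ with $z\cdot e = e\sigma$ and $\kappa(\sigma) = 1$, which forces $\sigma = \iota(z, s(e))$, and hence $z = \kappa(\sigma) = 1$ by the argument at Equation~\eqref{eq:unique lift wrt kappa}. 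Given $F$ satisfying~\eqref{eq:F:tau,v2}, define $\mft$ by~\eqref{eq:mft and F}; well-definedness in $e$ is immediate from~\eqref{eq:F:tau,v2} together with the description of $\cK$ (for if $e' = e\tau$ with $\tau \in \E_{S}$, then $\overline{F(e',\kappa)}\cdot e' = \overline{F(e,\kappa)}\cdot e\cdot(\overline{\kappa(\tau)}\cdot\tau)$, and $\overline{\kappa(\tau)}\cdot\tau$ lies in $\cK$). Conversely, given a section $\mft$, the torsor property forces $F(e,\kappa)$ to be the unique $z\in\mathbb{T}$ with $\mft(\dot e, \kappa) = (\overline{z}\cdot e, \kappa)\cK$, and~\eqref{eq:F:tau,v2} is then obtained by comparing representatives of $\mft(\dot{e\tau}, \kappa) = \mft(\dot e, \kappa)$ inside the torsor. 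These two assignments are manifestly inverse.

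For the homomorphism equivalence, a direct computation in $\Weyltwist$ yields
\[
    \mft(\dot f, e\HleftX \kappa)\,\mft(\dot e, \kappa)
    = \bigl(\overline{F(f, e\HleftX\kappa)\,F(e,\kappa)}\cdot fe,\,\kappa\bigr)\cK,
\]
while by definition $\mft(\dot{fe},\kappa) = (\overline{F(fe,\kappa)}\cdot fe,\,\kappa)\cK$. By the torsor property above, these two cosets agree for all composable pairs if and only if $F(fe,\kappa) = F(f, e\HleftX\kappa)\,F(e,\kappa)$; compatibility of $\mft$ with units and inverses then follows automatically (e.g.\ applying~\eqref{eq:F:tau,v2} with $\tau = r(e)$ a unit yields $F(u,\kappa) = 1$).

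For the continuity equivalence, the easy direction is that, if $F$ is continuous, then the map $(e,\kappa) \mapsto (\overline{F(e,\kappa)}\cdot e,\kappa)\cK$ is continuous as a composition of continuous maps, and since the projection $q\colon \actiongpd{\twPD}{\E} \to \Weylgpd$ is a continuous open surjection (by Lemma~\ref{lem:quotients}\ref{item:quotient map open}), hence a topological quotient, $\mft$ is continuous. The main obstacle is the converse direction: recovering continuity of $F$ from that of $\mft$. To handle this, I would invoke Corollary~\ref{cor:Cartan pair iso}, which says that $\Weyltwist \to \Weylgpd$ is itself a twist and hence a principal $\mathbb{T}$-bundle, with a continuous free $\mathbb{T}$-action $z \cdot ((e,\kappa)\cK) = (z\cdot e, \kappa)\cK$. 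Consequently, the ``ratio'' map sending any pair of elements in a common $\tilde{\pi}$-fiber to the unique $\mathbb{T}$-scalar relating them is continuous. Applying this to the two continuous functions $(e,\kappa) \mapsto \mft(\dot e, \kappa)$ and $(e,\kappa) \mapsto (e,\kappa)\cK$, both of which land in the same fiber of $\tilde{\pi}$ above $(\dot e, \kappa)$, extracts $F(e,\kappa)$ as a continuous function of $(e,\kappa)$.
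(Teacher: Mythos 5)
Your proposal is correct, and the bijection and homomorphism parts run along essentially the same lines as the paper's proof: the paper likewise extracts $F(e,\kappa)=\kappa(\sigma(e,\kappa))$ from a section, verifies~\eqref{eq:F:tau,v2} by comparing $\mft(\dot{e},\kappa)$ with $\mft(\dot{e}\dot{\tau},\kappa)$ inside the coset, and reduces the homomorphism property to the identity $F(fe,\kappa)=F(f,e\HleftX\kappa)F(e,\kappa)$ exactly as you do. The one place where you genuinely diverge is the hard direction of the continuity claim. The paper recovers continuity of $F$ from that of $\mft$ by a bare-hands net argument: it applies Fell's criterion to the quotient map $\actiongpd{\twPD}{\E}\to\Weyltwist$ (whose openness is Lemma~\ref{lem:q by cK is open map}), passes to subnets to produce lifts $(f_i,\kappa_i)$ of the converging cosets with $f_i\inv e_i\in\E_{S}$ and $\kappa_i(f_i\inv e_i)=F(e_i,\kappa_i)$, and then invokes the explicit definition of the topology on $\twPD$ to conclude $F(e_i,\kappa_i)\to F(e,\kappa)$. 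You instead import Corollary~\ref{cor:Cartan pair iso} to view $\tilde{\pi}\colon\Weyltwist\to\Weylgpd$ as a twist, hence a locally trivial principal $\mathbb{T}$-bundle with continuous division map, and read off $F$ as the ratio of the two continuous fiberwise sections $(e,\kappa)\mapsto(e,\kappa)\cK$ and $(e,\kappa)\mapsto\mft(\dot e,\kappa)$. This is legitimate (Corollary~\ref{cor:Cartan pair iso} precedes the lemma and does not depend on it, so there is no circularity) and is shorter, but it leans on the full strength of Theorems~\ref{thm:Weyl groupoid} and~\ref{thm:Weyl twist} through that corollary; the paper's argument buys self-containedness, using only the openness of the quotient by $\cK$ and the defining topology of $\twPD$. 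A cosmetic point: where you write that $\overline{\kappa(\tau)}\cdot\tau$ ``lies in $\cK$'' you of course mean that the pair $(\overline{\kappa(\tau)}\cdot\tau,\kappa)$ lies in $\cK$, and you should also record the one-line observation that $q_{\E}(z\cdot e)=q_{\E}(e)$, which is what makes the map defined by~\eqref{eq:mft and F} an actual section of $\tilde{\pi}$.
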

\begin{proof}
    First, suppose that $\mft$ is a section of $\tilde{\pi}$, i.e.,
    if $(f,\mu), (e,\kappa)\in \actiongpd{\twPD}{\E}$ are such that 
        $\mft (\dot{e},\kappa)
        = (f,\mu)\cK$, then $(\dot{f},\mu)=
        (\dot{e},\kappa)$ in $\Weylgpd$.
    This means that,    
    for every $(e,\kappa)\in \E\bfp{s}{P}\twPD$, there exists $\sigma(e,\kappa)\in  \E_{S} $ such that
    \[
        \mathfrak{t}(\dot{e},\kappa)
        =
        \bigl( e \sigma(e,\kappa)\inv, \kappa \bigr)\cK .
    \]
    If we let $F(e,\kappa)\coloneqq  \kappa(\sigma(e,\kappa))$, then Equation~\eqref{eq:mft and F} holds by~\eqref{eq:cosets of cK}.
    For any $\tau\in \E_{S}(P(\kappa)) $, we have
    \begin{align*}
        ( \overline{F(e,\kappa)}\cdot e , \kappa )\cK
        &=
        \mathfrak{t}(\dot{e},\kappa)
        =
        \mathfrak{t}(\dot{e}\dot{\tau},\kappa)
        =
        ( \overline{F(e\tau,\kappa)}\cdot  e\tau, \kappa )\cK
        .
    \end{align*}
    By definition of $\cK$, this is equivalent to
    \[
        1
        =
        \kappa
        \left(
            [\overline{F(e,\kappa)}\cdot  e]\inv [\overline{F(e\tau,\kappa)}\cdot e\tau]
        \right)
        =
        F(e,\kappa)
        \overline{F(e\tau,\kappa)}
        \kappa(\tau)
    \]
    and hence proves that $F$ satisfies Equation~\eqref{eq:F:tau,v2}.
    Conversely, given $F$, then the above argument in reverse shows that Equation~\eqref{eq:mft and F} determines a well-defined map $\mft$ if and only if $F$ satisfies Equation~\eqref{eq:F:tau,v2}, and since $q_{\E}(e)=q_{\E}(z\cdot e)$ for any $z\in\mathbb{T}$, the map $\mft$ is a section.

    Now, $\mft$ is homomorphic if and only if
    \begin{align*}
        \mft (\dot{f},\dot{e}\HleftX \kappa)
        \mft (\dot{e},\kappa)
        =
        \mft (\dot{f}\dot{e},\kappa)
        &&\text{ for all }
        (f,e,\kappa)\in \E^{(2)}\bfp{s}{P}\twPD.
    \end{align*}
    Since
    \begin{align*}
        \mft (\dot{f}\dot{e},\kappa)
        &=
        \bigl(  \overline{F(fe,\kappa)}\cdot fe , \kappa \bigr)\cK
        &&\text{ and}
        \\
        \mft (\dot{f},\dot{e}\HleftX \kappa)
        \mft (\dot{e},\kappa)
        &=
        \bigl( \overline{F(f,e\HleftX\kappa) F(e,\kappa)}\cdot fe , \kappa \bigr)\cK,
    \end{align*}
    it follows that $\mft$ is homomorphic if and only if
    \begin{align*}
        1
        =
        \kappa \bigl(  [\overline{F(fe,\kappa)}\cdot fe]\inv [\overline{F(f,e\HleftX\kappa) F(e,\kappa)}\cdot fe]\bigr)
        =
        F(fe,\kappa)\overline{F(f,e\HleftX\kappa) F(e,\kappa)},
    \end{align*}
    which means exactly that $F$ is a homomorphism.

    It remains to show the claim about continuity, so first suppose that $\mft$ is continuous. Assume that $(e_{i},\kappa_{i})\to (e,\kappa)$; we need to show that a subnet of $(F(e_{i},\kappa_{i}))_i$ converges to $F(e,\kappa)$ \cite[Theorem 18.1, (2)$\implies$(1)]{Munkres}. Since $q_{\E}\colon  e\mapsto\dot{e}$ and $\mft$ are continuous, we have 
   \[        
        ( \overline{F(e_{i},\kappa_{i})}\cdot e_{i} , \kappa_{i} )\cK
        \overset{\eqref{eq:mft and F}}{=}
        \mathfrak{t}(\dot{e}_{i},\kappa_{i})
        \to 
        \mathfrak{t}(\dot{e},\kappa)
        =
       (  \overline{F(e,\kappa)}\cdot e , \kappa )\cK.
   \]
   Since the quotient map $\actiongpd{\twPD}{\E} \to (\actiongpd{\twPD}{\E} )/\cK =\Weyltwist $ is open by Lemma~\ref{lem:q by cK is open map} and surjective, 
        we may invoke Fell's criterion; as we can without loss of generality pass to subnets, we may assume that there exists a net $(f_{j},\mu_{i})_{i}$ in $\E\bfp{s}{P}\twPD$ such that $(f_{i},\mu_{i})\to  (\overline{F(e,\kappa)}\cdot e,\kappa)$ and
   \begin{align*}
   (f_{i},\mu_{i})\cK
   &=
   ( \overline{F(e_{i},\kappa_{i})}\cdot e_{i} , \kappa_{i} )\cK, 
   \quad\text{ i.e., }\quad
   \mu_{i}=\kappa_{i},f_{i}\inv e_{i} \in  \E_{S}
   \quad\text{ and }\quad
   \kappa_{i} (f_{i}\inv e_{i} )
   =
   F(e_{i},\kappa_{i})
   .
   \end{align*}
   Using that $f_{i}\to \overline{F(e,\kappa)}\cdot e$, $e_{i}\to e$, and $\kappa_{i}\to \kappa$, the topology on $\twPD$ thus is designed so that
   \[
   F(e_{i},\kappa_{i}) 
   =
   \kappa_{i} (f_{i}\inv e_{i} )
   \to
   \kappa \bigl( [\overline{F(e,\kappa)}\cdot e]\inv e \bigr)
   =
   F(e,\kappa),
   \]
   proving that $F$ is continuous.
   
   Conversely, assume that $F$ is continuous. Since $q_{\E}$ is open, we may take a convergent net $(e_{i},\kappa_{i})\to (e,\kappa)$ in $\E\bfp{s}{P}\twPD$, and we must show that a subnet of $\mft(\dot{e}_{i},\kappa_{i})$ converges to $\mft(\dot{e},\kappa)$. Our continuity assumption implies
   \(      
       ( \overline{F(e_{i},\kappa_{i})}\cdot e_{i} , \kappa_{i} )
       \to (  \overline{F(e,\kappa)}\cdot  e, \kappa )
   \) in $\actiongpd{\twPD}{\E}$,
   so that continuity of $(f,\mu)\mapsto (f,\mu)\cK$ and~\eqref{eq:mft and F} then imply  continuity of $\mft$.
\end{proof}

\begin{proof}[Proof of Proposition~\ref{prop:trivial Weyl twist}]   
    By Corollary~\ref{cor:Cartan pair iso}, the Weyl twist of the pair $\csr(S; \E_{S} )\subset\csr(G;\E)$ is (canonically isomorphic to) the twist at~\eqref{eq:tilde pi}.
    By Lemma~\ref{lem:existence of mft}, the existence of a continuous, global, homomorphic section of the latter twist is equivalent to the existence of a map $F$ as claimed in the proposition. 
    As explained in Lemma~\ref{lem:c_s}, a twist admits a continuous and homomorphic section if and only if it is trivializable. Thus, the Weyl twist is isomorphic to the trivial twist if and only if $F$ exists. 
    The isomorphism of $\cs$-algebras now follows from the fact that $\csr(G;\mathbb{T}\times G)\cong \csr(G)$ and from Equation~\eqref{eq:iso between reduced} in Corollary~\ref{cor:Cartan pair iso}.
\end{proof}

\begin{example}[continuation of Examples~\ref{ex:c_theta:setup}, \ref{ex:c_theta:S=Zx0}, and~\ref{ex:c_theta:Weyl groupoid}]\label{ex:c_theta:trivial Weyl twist}
    We want to investigate whether or not the Weyl twist associated to the Cartan subalgebras of the rotation algebras is trivializable. Since the author was not able to make progress for the rational case, we will focus here on $\theta\notin \mathbb{Q}$.

     As explained in Example~\ref{ex:c_theta:Weyl groupoid}, we must have $S=\mathbb{Z}\cdot (m,n)$ with $\gcd(m,n)=1$, so that $\twPD[\E_{\theta}]$ is homeomorphic to $\mathbb{T}$: by~\eqref{eq:underline-kappa}, $w\in \mathbb{T}$ corresponds to $\kappa_{w}\in\twPD[\E_{\theta}]$ given by
    \[
        \kappa_{w}\colon \quad \mathbb{T}\times_{\cocycle_\theta}S \ni (z;rm,rn) \mapsto z
        \mathsf{e}^{-\pi i\theta r^{2}mn}
        w^{r}
        \in \mathbb{T}.
    \]
    If we use Bezout's identity to find integers $a,b$ such that $1=am+bn$, then we may define
    \(
        F\colon \actiongpd{\twPD[\E_{\theta}]}{\E_{\theta}} \to\mathbb{T}
    \)
    by
    \[
        F(z;\vec{h};\kappa_{w})
        \coloneqq  
        z \mathsf{e}^{\pi i \theta (an h_{1}^{2} - 2am h_{1}h_{2} - bm h_{2}^{2})} w^{ah_{1}+bh_{2}}.
    \]
    One can verify that $F$ satisfies~\eqref{eq:F:tau}, i.e., 
    \[
       F(z;rm,rn;\kappa_w)
       =
       \kappa_w(z;rm,rn)
       \text{ for all }r\in\mathbb{Z}.
    \]
    and that
    \[
      F(z';\vec{g};\kappa_{
      \lambda^{h_{1}n - h_{2} m}
      w})
      F(z; \vec{h};\kappa_w) 
      =
      F(z'z
      \lambda^{g_{2}h_{1}}
      ;\vec{g}+\vec{h};\kappa_w),
    \]
    which means that $F$ is a homomorphism (see also Equation~\eqref{eq:c_theta:HleftX for mathbbT}). As $F$ is continuous, it thus follows from Proposition~\ref{prop:trivial Weyl twist} that the Weyl twist is trivializable. In particular, together with Example~\ref{ex:c_theta:Weyl groupoid}, we get for $\theta\notin\mathbb{Q}$ that
    \[
        \csr (G, \cocycle_\theta) \cong \csr (\actiongpd[\theta]{\mathbb{T}}{\mathbb{Z}}),
        \text{ mapping }
        \csr (\mathbb{Z}\cdot (m,n), \cocycle_\theta)
        \text{ to }C(\mathbb{T}),
    \] 
    and so all Cartan pairs $B\subset A_{\theta}$ arising from subgroupoids in $(\mathbb{Z}^{2},\cocycle_\theta)$ are isomorphic (as Cartan pairs).
\end{example}

In the following, let $\widehat{S}$ be the untwisted dual of $S$ as described in \cite[Proposition 3.3]{MRW:1996:CtsTrace} (see also Remark~\ref{rmk:twPD for no twist}).

\begin{cor}[of Proposition~\ref{prop:trivial Weyl twist}]
    Suppose~$G$ is a \LCH, \etale\
    groupoid. Let $S\subset \Iso{G}$ be an open, normal, abelian subgroupoid of~$G$, and let $\actiongpd{\widehat{S}}{G}$ be the action groupoid associated to
    \[
    g\HleftX \chi \colon \quad
    t\mapsto \chi (g\inv t g)
    \quad\text{ for }\quad
    (t,g,\chi )\in S\bfpsr G\bfp{s}{P}\widehat{S}
    .
    \]
    If the subalgebra $\csr(S)$ of $\csr(G)$ is a Cartan subalgebra, then the associated Weyl twist  is trivializable if and only if there exists a continuous homomorphism
    $f\colon \actiongpd{\widehat{S}}{G}\to\mathbb{T}$ satisfying 
    \begin{equation}\label{eq:f:t}
        f(t,\chi)
        =
        \chi(t)
        \quad\text{ for all } t\in S(u), \chi\in\widehat{S(u)}
        .
    \end{equation}
    In this case,
    \[
        \csr(G) \cong \csr\bigl(\Weylgpd[\widehat{S}]\bigr).
    \]
\end{cor}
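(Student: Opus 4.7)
The plan is to derive this directly from Proposition~\ref{prop:trivial Weyl twist} by specializing to the trivial twist $\E = \mathbb{T}\times G$ and translating the hypothesis on $F$ into one on $f$. First, I would note that $\csr(G)=\csr(G;\mathbb{T}\times G)$, that $\E_{S}=\mathbb{T}\times S$ is already trivializable with obvious homomorphic section $\mfs(t)=(1,t)$, and hence (Remark~\ref{rmk:twPD for no twist}) the map $\mfs^{*}\colon \twPD\to \widehat{S}$, $\kappa\mapsto \chi_{\kappa}$ with $\chi_{\kappa}(t)=\kappa(1,t)$, is a homeomorphism; conversely, a character $\chi\in\widehat{S(u)}$ corresponds to $\kappa_{\chi}\in\twPD(u)$ given by $\kappa_{\chi}(z,t)=z\chi(t)$. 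A direct computation in the trivial twist, $(z,g)\inv (1,t)(z,g)=(1,g\inv tg)$, shows that the $\E$-action of Proposition~\ref{prop:E:HleftX is an action} depends only on the $G$-component and corresponds under $\mfs^{*}$ precisely to the $G$-action $(g\HleftX\chi)(t)=\chi(g\inv tg)$ stated in the corollary. In particular, that action is well-defined, continuous, and descends to $G/S$ since~$S$ is abelian.

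Next, I would establish the bijection between maps $F\colon \actiongpd{\twPD}{\E}\to\mathbb{T}$ satisfying~\eqref{eq:F:tau} and maps $f\colon \actiongpd{\widehat{S}}{G}\to\mathbb{T}$ satisfying~\eqref{eq:f:t}. Given $f$, set
\[
F(z,g;\kappa_{\chi})\coloneqq z\cdot f(g,\chi).
\]
Since in the trivial twist, every element of $\E_{S}$ is of the form $(z,t)$ with $t\in S$, condition~\eqref{eq:F:tau} for $F$ becomes $F(z,t;\kappa_{\chi})=\kappa_{\chi}(z,t)=z\chi(t)$, which is exactly condition~\eqref{eq:f:t} for $f$. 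The homomorphism property
\[
F\bigl((z,g)(w,h);\kappa_{\chi}\bigr) = F(z,g;h\HleftX\kappa_{\chi})\, F(w,h;\kappa_{\chi})
\]
reduces after cancelling the $\mathbb{T}$-factors to $f(gh,\chi)=f(g,h\HleftX\chi)\,f(h,\chi)$, i.e., the homomorphism property of $f$. Continuity of $F$ is equivalent to continuity of $f$ since $\mfs^{*}$ is a homeomorphism and the $\mathbb{T}$-coordinate is handled trivially. Conversely, given $F$, define $f(g,\chi)\coloneqq F(1,g;\kappa_{\chi})$; the same verifications run in reverse, using centrality of $\mathbb{T}$ to recover $F(z,g;\kappa_{\chi})=z\cdot f(g,\chi)$ from the homomorphism property plus~\eqref{eq:F:tau} applied to $\tau=(z,u)$.

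With this bijection in place, the equivalence in the corollary is immediate from Proposition~\ref{prop:trivial Weyl twist}. For the final $\cs$-algebra isomorphism, I would combine the isomorphism $\csr(G;\E)\cong \csr(\Weylgpd)$ from Proposition~\ref{prop:trivial Weyl twist} with the canonical identification $\csr(G;\mathbb{T}\times G)\cong \csr(G)$ and with the homeomorphism $\twPD\cong\widehat{S}$, which intertwines the $G/S$-actions (as verified above) and therefore induces an isomorphism $\Weylgpd\cong \Weylgpd[\widehat{S}]$ of action groupoids.

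The only point that requires minor care is the translation of the action: one must check that the formula $e\HleftX\kappa(\tau)=\kappa(e\inv\tau e)$ from Proposition~\ref{prop:E:HleftX is an action}, when transported via $\mfs^{*}$, really produces the formula $(g\HleftX\chi)(t)=\chi(g\inv tg)$ rather than something twisted by a cocycle correction; this works cleanly precisely because $\mfs$ is homomorphic, so no obstruction arises. After that sanity check, the entire statement is a formal consequence of Proposition~\ref{prop:trivial Weyl twist}.
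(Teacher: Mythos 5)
Your proposal is correct and follows essentially the same route as the paper: both use the homomorphic section $\mfs(g)=(1,g)$ of the trivial twist to obtain the equivariant homeomorphism $\mfs^{*}\colon\twPD\to\widehat{S}$, set up the correspondence $F\leftrightarrow f$ via the identical formulas $F((z,g),\kappa)=z\,f(g,\mfs^{*}(\kappa))$ and $f(g,\chi)=F(\mfs(g),(\mfs^{*})\inv(\chi))$, and then invoke Proposition~\ref{prop:trivial Weyl twist} together with the induced isomorphism $\Weylgpd\cong\Weylgpd[\widehat{S}]$ for the $\cs$-algebra statement. No gaps.
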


\begin{proof}
    Let $\E\coloneqq  \mathbb{T}\times G$ be the trivial twist, so that $\csr(S; \E_{S} )= \csr(S)$ is a Cartan subalgebra of $\csr(G;\E)= \csr(G)$ by assumption. As explained in  Remark~\ref{rmk:twPD for no twist}, the global section $\mfs\colon G\to \E, g\mapsto (1,g)$, gives rise to a homeomorphism $\mfs^*\colon \twPD\to\widehat{S}$. Since $\mfs$ is a homomorphism, one can check  that
    \begin{align}\label{eq:mfs-star is equivariant}
            \pi(e)\HleftX\mfs^*(\kappa)
            =
            \mfs^*(e\HleftX\kappa)
            \quad\text{ and }\quad
            (\mfs^*)\inv (\pi(e)\HleftX\chi)
            =
            e\HleftX (\mfs^*)\inv(\chi)
        \end{align}
    for all $e\in\E_{u}, \kappa\in\twPD(u),\chi\in\widehat{S}(u)$.
   In particular, when passing to the actions of the quotient $G/S=\E/\E_{S}$, then $\mfs^*$ and its inverse are equivariant. This proves that the map
   \begin{equation}\label{eq:mfs:iso of gpds}
       \Weylgpd
       \to
       \Weylgpd[\widehat{S}]
       \quad\text{ given by }\quad
       (\dot{e},\kappa)
       \mapsto
       (\dot{e},\mfs^*(\kappa))
   \end{equation}
   is an isomorphism of topological groupoids.
    
    Now, from any map $F\colon \actiongpd{\twPD}{\E}\to\mathbb{T}$, we define a map $f_{F}\colon \actiongpd{\widehat{S}}{G}\to\mathbb{T}$ by
    \[
        f_{F}(g,\chi) \coloneqq   F\big(\mfs(g), (\mfs^*)\inv(\chi)\big),
    \]
    and conversely, from any map $f$, we define $F_{f}$ by
    \[
        F_{f}\big((z,g),\kappa\big) \coloneqq  
        z\, f\big( g,\mfs^*(\kappa)\big).
    \]
    Using continuity of $\mfs$ and $\mfs^*$ (respectively using~\eqref{eq:mfs-star is equivariant}), one can easily check that, if one of the maps $f,F$ is continuous (respectively homomorphic), then the map $F_{f},f_{F}$ we construct from it is likewise continuous (respectively homomorphic).

 Lastly, note that we have for $t\in S(u)$,
    \begin{align*}
        f_{F}(t,\chi)
        =
        F\big(\mfs(t), (\mfs^*)\inv(\chi)\big)
        \overset{\eqref{eq:F:tau}}{=}
        (\mfs^*)\inv(\chi) \bigl(\mfs(t)\bigr)
        =
        (\mfs^*\circ (\mfs^*)\inv)(\chi)(t)
        =
        \chi(t)
    \end{align*}
    and for $\tau=(z,t)\in E_{S}(u)$,
    \begin{align*}
        F_{f}(\tau,\kappa)
        =
        z\, f\big( t,\mfs^*(\kappa)\big)
        \overset{\eqref{eq:f:t}}{=}
        z\, \mfs^*(\kappa)(t)
        =
        z\, \kappa(\mfs(t))
        =
        \kappa(z\cdot (1,t))
        = 
        \kappa(\tau).
    \end{align*}
    This proves that $f_{F}$ satisfies~\eqref{eq:f:t} if $F$ satisfies~\eqref{eq:F:tau}, and that $F_{f}$ satisfies~\eqref{eq:F:tau} if $f$ satisfies~\eqref{eq:f:t}. An application of Proposition~\ref{prop:trivial Weyl twist} now proves the claim regarding the Weyl twist.
    
    For the claim about $\cs$-algebras, note that
    \begin{align*}
        \csr(G) = \csr (G;\E)
        &\cong
        \csr
        \bigl(
            \Weylgpd  ;
            \Weyltwist 
        \bigr)
        &&\text{by Corollary~\ref{cor:Cartan pair iso}}
        \\
        &\cong 
        \csr
        \bigl(
            \Weylgpd
        \bigr)
        &&\text{by the above}
        \\
        &\cong 
        \csr
        \bigl(
            \Weylgpd[\widehat{S}]
        \bigr)
        &&\text{by the isomorphism at \eqref{eq:mfs:iso of gpds}}.
    \end{align*}
\end{proof}

\section{$\cs$-diagonals}\label{sec:diag}

In this section, we will use the previous results to identify $\cs$-diagonals coming from open subgroupoids. 
\begin{notation}
    If $\pi\colon \E\to G$ is  a twist and $u$ a unit, then for two elements $e,\sigma\in \E(u)$, we let $[e,\sigma]\coloneqq  e\inv\sigma\inv e\sigma$ denote the commutator.\footnote{Our convention for $[e,\sigma]$ differs from that in \cite[Section 7]{Renault:2023:Ext}.} 
    If $S\leq G$ is a subgroupoid, we write
    \[
        [e, \E_{S} ]
        \coloneqq  
        \left\{
        [e,\sigma]
        :
        \sigma\in  \E_{S} (u)
        \right\}
     \quad\text{ and }\quad
        [\pi(e),S]
        \coloneqq  
        \left\{
        [\pi(e),s]
        :
        s\in S(u)
        \right\}.
    \]
\end{notation}

\begin{thm}\label{thm:diag}
    Suppose 
  $\E$ is a twist 
  over a \LCH, \etale\
  groupoid~$G$, and~$S$ is an open subgroupoid of~$G$, so that $B\coloneqq  \csr(S; \E_{S} )$ can be regarded as a subalgebra of $A\coloneqq  \csr(G;\E)$. Then the following statements  are equivalent.
  \begin{enumerate}[label=\textup{(\roman*)}]
    
    \item\label{it:diag:B,1} 
    $B$ is a $\cs$-diagonal in $A$ for which
    \begin{equation}\label{eq:thm:bisection}
        \{
        h\in C_{c}(G;\E):
        \suppoG(h) \text{ is a bisection}
        \} \subset N(B).
    \end{equation}
    
    \item\label{it:diag:B,2}
    $B$ is a $\cs$-diagonal in $A$ for which the set
    \begin{equation*}
       \{
        a\in A:
        \suppoG(j(a)) \text{ is a bisection}
        \} 
        \cap  N(B)
    \end{equation*}
    generates $A$.
    
    \item\label{it:diag:S}
    The restricted twist $\E_{S}$ is abelian, closed, and normal in $\E$, and
      \begin{align}
        \label{eq:diag:S}
        \left\{e\in \Iso{\E}: 
            \pi\text{ is injective on }[e, \E_{S} ]\right\} \subset \E_{S}.
    \end{align}   
  \end{enumerate}
  If the above equivalent conditions are satisfied, then \eqref{eq:diag:S} is in fact an equality of sets, and~$S$ is maximal among all subgroupoids of $\Iso{G}$ whose restricted twist is abelian.
\end{thm}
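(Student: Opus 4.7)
The plan is to reduce the $\cs$-diagonal condition to freeness of the $G/S$-action on $\twPD$ via Kumjian--Renault theory together with Theorem~\ref{thm:Weyl groupoid}: once $B$ is Cartan, $W_{B\subset A}\cong\Weylgpd$, and $B$ is a $\cs$-diagonal iff the Weyl groupoid is principal iff this action is free. The task then becomes translating freeness back into the groupoid-theoretic condition~\eqref{eq:diag:S}.

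For \ref{it:diag:S}$\Rightarrow$\ref{it:diag:B,1}, I would first verify the hypotheses of Theorem~\ref{thm:DWZ}. For~\eqref{eq:thm:DWZ:maximal}: any $e\in\Iso{\E}$ with $|[e,\E_{S}]|=1$ has $\pi$ trivially injective on the singleton $\{s(e)\}=[e,\E_{S}]$, so~\eqref{eq:diag:S} forces $e\in\E_{S}$; combined with openness of $\E_{S}$, this yields the equality. For~\eqref{eq:thm:DWZ:ricc}: the equality $|[e,\E_{S}]|=|[\pi(e),S]|$ forces $\pi|_{[e,\E_{S}]}$ to be injective (it is always surjective onto $[\pi(e),S]$), so $e\in\E_{S}$ by~\eqref{eq:diag:S}, but then $\E_{S}$-abelianness forces $|[e,\E_{S}]|=1$, contradicting $|[e,\E_{S}]|>1$; so the set in~\eqref{eq:thm:DWZ:ricc} is empty. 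Thus $B$ is Cartan, and normality of $\E_{S}$ gives~\eqref{eq:thm:bisection} by \cite[Corollary 3.25]{DWZ:2025:Twist}. It remains to show $B$ is a $\cs$-diagonal, which via Theorem~\ref{thm:Weyl groupoid} means $\Weylgpd$ is principal. A non-trivial isotropy pair $(\dot{e},\kappa)$ corresponds to $e\in\Iso{\E}\setminus\E_{S}$ with $\kappa(e\inv\tau e)=\kappa(\tau)$ for every $\tau\in\E_{S}(s(e))$; since $\E_{S}$ is abelian, $[e,\E_{S}]$ is a subgroup of $\E_{S}(s(e))$ and this condition is exactly $\kappa\equiv 1$ on $[e,\E_{S}]$. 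Such a $\kappa\in\twPD(s(e))$ exists iff $[e,\E_{S}]\cap(\mathbb{T}\cdot s(e))=\{s(e)\}$, equivalently $\pi|_{[e,\E_{S}]}$ is injective: the forward direction comes from $\kappa(z\cdot s(e))=z$, and the converse by extending the obvious $\mathbb{T}$-equivariant character of the closed subgroup $[e,\E_{S}]\cdot\mathbb{T}$ (closed because injectivity makes $[e,\E_{S}]$ discrete in $\E_{S}(s(e))$) to $\E_{S}(s(e))$ via Pontryagin duality. So freeness of the action is precisely~\eqref{eq:diag:S}, as required.

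The implication \ref{it:diag:B,1}$\Rightarrow$\ref{it:diag:B,2} is immediate. For \ref{it:diag:B,2}$\Rightarrow$\ref{it:diag:S}, $B$ being Cartan forces $\E_{S}$ abelian and (by Theorem~\ref{thm:DWZ}) closed. The new ingredient is normality of $S$ in $G$. I would observe that products of bisection-supported normalizers are again bisection-supported normalizers (bisection-products are bisections in an \etale\ groupoid, and $N(B)$ is closed under products), so the $\cs$-algebra they generate equals the closed linear span of bisection-supported normalizers. For each $g\in G$, approximating in reduced norm a bisection-supported bump function $f\in C_{c}(G;\E)$ that is positive somewhere over $g$ by such a linear combination---and using that reduced-norm convergence implies uniform convergence on $\E$---yields at least one bisection-supported normalizer $n\in N(B)$ with $g\in\suppoG(n)$. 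The normalizer condition $n^*Bn\subset B$ then forces $g\inv S(r(g))g\subset S$ via a bisection analysis of supports; replacing $g$ by $g\inv$ gives the reverse inclusion, so $S$ is normal. Theorem~\ref{thm:Weyl groupoid} now applies, and the Weyl-groupoid analysis of the previous paragraph runs in reverse to extract~\eqref{eq:diag:S} from principality of $W_{B\subset A}$.

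For the two concluding assertions: $\E_{S}$ being abelian gives $[e,\E_{S}]=\{s(e)\}$ (on which $\pi$ is trivially injective) for every $e\in\E_{S}$, so $\E_{S}\subset\{e\in\Iso{\E}:\pi|_{[e,\E_{S}]}\text{ is injective}\}$; combined with~\eqref{eq:diag:S} this is an equality. For maximality: if $T\subset\Iso{G}$ is a subgroupoid with $\E_{T}$ abelian and $t\in T\setminus S$ with lift $e\in\E_{T}$, abelianness of $\E_{T}$ gives $[e,\sigma]=s(e)$ for all $\sigma\in\E_{S}\subset\E_{T}$, so $\pi|_{[e,\E_{S}]}$ is trivially injective and (iii) forces $e\in\E_{S}$, contradicting $t\notin S$. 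The step I expect to be the main obstacle is deriving normality of $S$ in \ref{it:diag:B,2}$\Rightarrow$\ref{it:diag:S}: the generating-set hypothesis must be carefully manipulated into an existence statement for individual bisection-supported normalizers containing arbitrary points of $G$, rather than the blanket inclusion provided by~\ref{it:diag:B,1}.
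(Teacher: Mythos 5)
Your overall architecture matches the paper's: reduce everything to principality of $\Weylgpd$ via Theorem~\ref{thm:Weyl groupoid} and Kumjian's characterization of $\cs$-diagonals, then translate principality into Condition~\eqref{eq:diag:S} by analyzing which pairs $(\dot{e},\kappa)$ are isotropy points (the paper's Lemma~\ref{lem:iso of Q x twPD}). Where you genuinely diverge is in the key existence step, namely that injectivity of $\pi$ on $[e,\E_{S}]$ produces a witness $\kappa\in\twPD(s(e))$ that is trivial on $[e,\E_{S}]$. The paper's Lemma~\ref{lem:not emptyset with Ad => not principal} does this by trivializing the fibre twist $\E_{S}(u)$ via Kleppner's theorem on symmetric $2$-cocycles and then running an explicit Zorn's-lemma extension of a character over the discrete group $S(u)$. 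Your route --- observing that injectivity makes $[e,\E_{S}]\cdot\mathbb{T}=\pi\inv([\pi(e),S])$ a closed subgroup of the locally compact abelian group $\E_{S}(u)$ carrying a well-defined $\mathbb{T}$-equivariant character $z\cdot f\mapsto z$, and then extending by injectivity of $\mathbb{T}$ in the category of LCA groups --- is cleaner and avoids Kleppner entirely; it buys a shorter proof at the cost of invoking the abstract extension theorem rather than an elementary construction. The rest of the core argument (the subgroup property of $[e,\E_{S}]$, the verification of \eqref{eq:thm:DWZ:maximal} and \eqref{eq:thm:DWZ:ricc} from \eqref{eq:diag:S}, the two concluding assertions) coincides with the paper's.

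Two caveats on the parts the paper outsources to \cite[Theorem 3.3]{DWZ:2025:Twist}. First, in \ref{it:diag:B,2}$\Rightarrow$\ref{it:diag:S} you invoke Theorem~\ref{thm:DWZ} to get closedness of $\E_{S}$ \emph{before} you have established normality of $S$; but normality is a standing hypothesis of Theorem~\ref{thm:DWZ} as stated, so this is circular as written. The fix is to cite the stronger \cite[Theorem 3.3]{DWZ:2025:Twist}, which takes only an open subgroupoid as input and yields closedness and normality simultaneously --- this is exactly what the paper does, and it also disposes of \ref{it:diag:B,1}$\Leftrightarrow$\ref{it:diag:B,2} in one stroke. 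Second, your direct derivation of normality from the generating-set hypothesis (extracting a bisection-supported normalizer through each $g\in G$ by uniform approximation, then a ``bisection analysis of supports'') is plausible and is essentially the argument in the cited literature, but as sketched it glosses over the fact that membership of $n^{*}bn$ in the completion $\csr(S;\E_{S})$ must be converted into a support statement (the paper's \cite[Proposition 3.12]{DWZ:2025:Twist} is what makes this legitimate). Neither issue affects the part of the theorem that is new to this paper.
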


 We will start with a series of lemmas. 
 It is a consequence of \cite[Proposition 3.14]{DWZ:2025:Twist} that in all three statements of Theorem~\ref{thm:diag}, we have that $  S\z$ must be all of $\cU =G\z$. In particular, we are  in the setting of Assumption~\ref{as:E,G,S}, and so we may resume the notation from the previous sections:  $\E/ \E_{S} =G/S$ is the quotient groupoid whose elements we write as $\dot{e}$ for $e\in \E$; $\twPD=\bigsqcup_{u\in\cU }\twPD(u)$ is the twisted dual (Definition~\ref{def:twPD}), which is homeomorphic to $\widehat{B}$ (Proposition~\ref{prop:spec B}); and $\HleftX$ denotes the action of $\E$ and of $G/S$ on $\twPD$ (Proposition~\ref{prop:E:HleftX is an action} resp.\ Corollary~\ref{cor:G/S:HleftX is an action}).

\begin{lemma}\label{lem:iso of Q x twPD}
   In the setting of Assumption~\ref{as:E,G,S}, let 
   $(\dot{e},\kappa)\in \Weylgpd $. Then $(\dot{e},\kappa)$ is an isotropy point if and only if $r(e)=s(e)$ and 
    \begin{align}
        \label{eq:condition on kappa,v2}
        \kappa(f)=1\quad\text{ for every  }f\in [e, \E_{S} ].
    \end{align}
\end{lemma}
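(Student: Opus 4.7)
The plan is to unpack the definition of isotropy in the action groupoid $\Weylgpd = \actiongpd{\twPD}{(G/S)}$. By Definition~\ref{def:action groupoid}, the source and range of $(\dot{e},\kappa)$ are $\kappa$ and $\dot{e}\HleftX\kappa$ respectively, so $(\dot{e},\kappa)$ is an isotropy element if and only if $\dot{e}\HleftX\kappa = \kappa$ as elements of $\twPD$.

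I will first note that since $\twPD = \bigsqcup_{u\in\cU}\twPD(u)$, the equality $\dot{e}\HleftX\kappa=\kappa$ forces $P(\dot{e}\HleftX\kappa) = P(\kappa)$, which spells out to $r(e)=s(e)$; this yields the first claimed necessary condition. Conversely, assuming $r(e)=s(e)=:u$, both $\dot{e}\HleftX\kappa$ and $\kappa$ lie in $\twPD(u)$, so comparing them as continuous homomorphisms on $\E_{S}(u)$ is meaningful.

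The core step is to translate the remaining equality of characters into the commutator condition~\eqref{eq:condition on kappa,v2}. Using the formula from Corollary~\ref{cor:G/S:HleftX is an action}, the equation $\dot{e}\HleftX\kappa=\kappa$ unfolds to
\[
    \kappa(e\inv\tau e) = \kappa(\tau) \quad\text{for all } \tau\in\E_{S}(u).
\]
Since $\E_{S}(u)$ is abelian by Assumption~\ref{as:E,G,S} and $\kappa$ is a homomorphism, this is equivalent to $\kappa(e\inv\tau e\cdot\tau\inv)=1$ for every $\tau\in\E_{S}(u)$. A direct computation gives $e\inv\tau e\cdot\tau\inv = [e,\tau\inv]$, and as $\tau$ varies over $\E_{S}(u)$ so does $\sigma\coloneqq\tau\inv$. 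Hence the condition is precisely $\kappa([e,\sigma])=1$ for every $\sigma\in\E_{S}(u)$, i.e., $\kappa(f)=1$ for every $f\in [e,\E_{S}]$, as required.

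I do not foresee any real obstacle: the argument is a bookkeeping exercise combining the definition of isotropy in an action groupoid with the commutator identity $e\inv\tau e\cdot\tau\inv = [e,\tau\inv]$. The only subtle point is that the final condition should be independent of the choice of representative $e$ of $\dot{e}$, but this is automatic because the action of $\E$ on $\twPD$ descends to $G/S$ (Corollary~\ref{cor:G/S:HleftX is an action}), and replacing $e$ by $e\sigma'$ for some $\sigma'\in\E_{S}(u)$ translates the commutator set $[e,\E_{S}]$ into itself modulo elements on which $\kappa$ is already trivial.
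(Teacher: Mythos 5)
Your proof is correct and follows essentially the same route as the paper's: both unpack isotropy as $\dot{e}\HleftX\kappa=\kappa$, read off $r(e)=s(e)$ from the momentum map, and use the homomorphism property of $\kappa$ to convert the invariance condition $\kappa(e\inv\tau e)=\kappa(\tau)$ into vanishing on the commutator set (the paper computes $\kappa([e,\tau])=\kappa(e\inv\tau e)\inv\kappa(\tau)$ directly, whereas you substitute $\sigma=\tau\inv$ — an immaterial difference).
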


\begin{proof}
    By definition, 
   $(\dot{e},\kappa)$
    is an isotropy point if and only if its range $\dot{e}\HleftX \kappa$ equals its source $\kappa$; in particular, $u\coloneqq   P(\dot{e}\HleftX \kappa)=r(e)$ must equal $P(\kappa)=s(e)$, and so $\dot{e}\in\Iso{(G/S)}$.  We have
    \begin{align}
        \kappa=\dot{e}\HleftX \kappa
        \iff 
        \label{eq:condition on kappa}
        \kappa(\tau)=\kappa(e\inv \tau e)
       \quad \text{ for every }\tau\in  \E_{S} (u).
    \end{align}
    Since $\kappa$ is a homomorphism, we have
    \[
    \kappa\bigl( [e,\tau]\bigr)=\kappa(e\inv \tau\inv e \tau)=\kappa(e\inv \tau e)\inv \kappa(\tau).\qedhere
    \]
\end{proof}

\begin{lemma}\label{lem:not emptyset with Ad => not principal}
In the setting of Assumption~\ref{as:E,G,S}, if $e\in \Iso{\E}$ is such that $\pi$ is injective on $[e, \E_{S} ]$,
    then there exists $\kappa\in \twPD $ such that 
   $(\dot{e},\kappa) \in \Iso{\Weylgpd }$.
\end{lemma}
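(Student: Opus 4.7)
By Lemma~\ref{lem:iso of Q x twPD}, setting $u = r(e) = s(e)$, it suffices to construct $\kappa \in \twPD(u)$ with $\kappa(f) = 1$ for every $f \in [e, \E_{S}]$; such a $\kappa$ automatically satisfies $e \HleftX \kappa = \kappa$, so that $(\dot{e}, \kappa) \in \Iso{\Weylgpd}$. The plan is to produce $\kappa$ via the classical character extension theorem for locally compact abelian groups: first define a continuous character on a suitable closed subgroup $H$ of $\E_{S}(u)$ that contains both $\iota(\mathbb{T}, u)$ and $[e, \E_{S}]$, and then extend it to a continuous character on all of $\E_{S}(u)$.

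The natural choice is $H \coloneqq \iota(\mathbb{T}, u) \cdot [e, \E_{S}]$. The injectivity hypothesis immediately forces $\iota(\mathbb{T}, u) \cap [e, \E_{S}] = \{u\}$: indeed $u = [e, \iota(1, u)]$ lies in $[e, \E_{S}]$, every element of $\iota(\mathbb{T}, u)$ is sent by $\pi$ to $u$, and so by injectivity any $\iota(z, u) \in [e, \E_{S}]$ must coincide with $u$. Consequently $H$ is the internal algebraic direct product of $\iota(\mathbb{T}, u) \cong \mathbb{T}$ and $[e, \E_{S}]$, and the formula $\kappa_{0}(\iota(z, u) \cdot f) = z$ for $f \in [e, \E_{S}]$ well-defines a group homomorphism $\kappa_{0} \colon H \to \mathbb{T}$. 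To run the extension argument I need $H$ to be closed in $\E_{S}(u)$ and $\kappa_{0}$ to be continuous. Here I would invoke the étale setting in which this lemma is ultimately applied (in Theorem~\ref{thm:diag}, $S$ is open in the étale groupoid $G$ and hence itself étale), so that the isotropy $S(u)$ is discrete. The continuous bijection $\pi|_{[e, \E_{S}]} \colon [e, \E_{S}] \to [\pi(e), S(u)]$ then forces $[e, \E_{S}]$ to be discrete and closed in $\E_{S}(u)$: each of its points is the unique intersection of $[e, \E_{S}]$ with the circle fiber of $\pi$ over some element of $S(u)$, and circles over distinct elements of $S(u)$ are pairwise disjoint clopen components. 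Thus $H$ is closed, the map $(z, f) \mapsto \iota(z, u) \cdot f$ is a homeomorphism $\mathbb{T} \times [e, \E_{S}] \to H$, and $\kappa_{0}$ is continuous.

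Applying the character extension theorem then yields a continuous character $\kappa$ of $\E_{S}(u)$ extending $\kappa_{0}$. By construction $\kappa(\iota(z, u)) = z$, so $\kappa \in \twPD(u)$, while $\kappa|_{[e, \E_{S}]} \equiv 1$, and Lemma~\ref{lem:iso of Q x twPD} concludes the proof. The main obstacle is the topological step establishing the closedness of $[e, \E_{S}]$ and the topological product structure of $H$; in the general setting of Assumption~\ref{as:E,G,S} this can fail — the closure $\overline{[e, \E_{S}]}$ might intersect $\iota(\mathbb{T}, u)$ nontrivially and obstruct the direct-sum definition of $\kappa_{0}$ — so the étale hypothesis in the applications of this lemma is precisely what makes both claims automatic.
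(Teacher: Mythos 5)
Your proof is correct in substance but takes a genuinely different route from the paper's. Both arguments reduce the problem, via Lemma~\ref{lem:iso of Q x twPD}, to extending the ``tautological'' character ($\iota(z,u)\mapsto z$, trivial on $[e,\E_{S}]$) from a subgroup of $\E_{S}(u)$ to all of $\E_{S}(u)$; but where you work directly with the closed subgroup $H=\iota(\mathbb{T},u)\cdot[e,\E_{S}]$ of the locally compact group $\E_{S}(u)$ and invoke the character extension theorem for LCA groups, the paper first trivializes the fibre: it uses that any set-theoretic section over the discrete group $S(u)$ yields a symmetric $2$-cocycle, applies Kleppner's lemma to get a $\mathbb{T}$-equivariant isomorphism $\Phi\colon\mathbb{T}\times S(u)\to\E_{S}(u)$, transports $[e,\E_{S}]$ to a graph subgroup $\{(\rho_0(s),s)\}$, and then extends $\rho_0$ to a character of $S(u)$ by hand via Zorn's lemma and divisibility of $\mathbb{T}$. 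Your version is shorter because the LCA extension theorem packages both the algebraic extension and the continuity bookkeeping; the paper's version is more self-contained and makes the role of the trivialization $\Phi$ (reused when writing down $\kappa$ explicitly) transparent. Your concluding caveat about needing $S(u)$ discrete is apt, but note that the paper's own proof begins with ``Since $S(u)$ is discrete\dots,'' so it relies on exactly the same strengthening of Assumption~\ref{as:E,G,S}; you are not assuming more than the paper does.

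One step you should not skip: your claim that $H$ is the internal direct product of $\iota(\mathbb{T},u)$ and $[e,\E_{S}]$ presupposes that $[e,\E_{S}]$ is a \emph{subgroup} of $\E_{S}(u)$, which is not automatic for a set of commutators and is needed for $\kappa_0$ to be a well-defined homomorphism. The paper proves this with a short computation; a quick fix in your framework is to observe that $\sigma\mapsto e\inv\sigma e$ is an automorphism of $\E_{S}(u)$ by normality, so $\sigma\mapsto[e,\sigma]=(e\inv\sigma e)\inv\sigma$ is a group homomorphism of the abelian group $\E_{S}(u)$ into itself, and $[e,\E_{S}]$ is its image. With that supplied, your argument goes through.
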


\begin{proof}
    Let $u\coloneqq  r(e)=s(e)$. Note that $[e, \E_{S} ]$ is a subgroup of $ \E_{S} (u)$. Indeed, if $\sigma,\tau\in \E_{S} (u)$, then
    \begin{align*}
            [ e ,\sigma]\inv [ e ,\tau]
            =
            \left( e\inv \sigma\inv  e  \sigma\right)\inv
             e\inv \tau\inv  e  \tau
            =
            \sigma\inv  e\inv \sigma  e  
             e\inv \tau\inv  e  \tau
            =
            \sigma\inv ( e\inv \sigma \tau\inv  e ) \tau.
    \end{align*}
    Since $ \E_{S} (u)$ is a normal and abelian subgroup of $\E(u)$, we have that $ e\inv \sigma \tau\inv  e $ commutes with $\sigma\inv$ and that $\sigma \tau\inv= (\sigma\inv \tau)\inv$, so that
        \begin{align*}
            [ e ,\sigma]\inv [ e ,\tau]
            =
             e\inv (\sigma\inv \tau)\inv  e  (\sigma\inv \tau)
            =
            [ e ,\sigma\inv \tau]
            \in [ e , \E_{S} ]
        \end{align*}
    as needed.

    Since $S(u)$ is discrete, any set-theoretic section $S(u)\to \E_{S}(u)$ of $\pi$ that fixes $u$ gives rise to a normalized, symmetric $2$-cocycle on $S(u)$ (Lemma~\ref{lem:c_s}). Since $S(u)$ is furthermore abelian, an old result by \citeauthor{Kleppner:1965:Multipliers} \cite[Lemma 7.2]{Kleppner:1965:Multipliers} tells us that this $2$-cocycle is cohomologous to a 2-coboundary. Since the twist induced by this $2$-cocycle is isomorphic to $\E_{S}(u)$ (Lemma~\ref{lem:c_s}), the twist is trivializable, i.e., 
    there exists a $\mathbb{T}$-equivariant homomorphism $\Phi\colon\mathbb{T}\times S(u) \to \E_{S}(u)$ such that $\pi(\Phi(z,s))=s$. Our original assumption that $\pi$ be injective on $[e,\E_{S}]$ thus implies that, if we let $ \mathcal{F}\coloneqq \Phi\inv([e,\E_{S}])$ and $\mathrm{pr}_{S}\colon\mathbb{T}\times S(u)\to S(u)$ the projection map, then there exists a unique map
    \[\rho_0\colon \mathrm{pr}_{S}( \mathcal{F})\to \mathbb{T}
    \quad\text{ such that }\quad
     \mathcal{F}=\{(\rho_0(s),s):s\in\mathrm{pr}_{S}( \mathcal{F})\}.\]
    Since $ \mathcal{F}$ is a subgroup, we must have $(\rho_0(s),s)(\rho_0(t),t)=(\rho_0(st),st)$ for all $s,t\in\mathrm{pr}_{S}( \mathcal{F})$, so that $\rho_{0}$ is a homomorphism. We claim that we can extend $\rho_0$ to a homomorphism on all of $S(u)$.
    To this end, consider the (clearly non-empty) set 
    \begin{align*}
            P \coloneqq  {}
            \bigl\{
            (\rho,H) 
            :{}
            &\tilde\pi( \mathcal{F})\leq H\leq S(u) \text{ is a subgroup};    
            \bigr.
            \\
            \bigl.
            &
            \rho\colon 
            H
            \to
            \mathbb{T}
            \text{ is a homomorphism such that }
            \rho|_{\mathrm{pr}_{S}( \mathcal{F})}=\rho_0
            \bigr\}
        \end{align*}
    with the partial order given by
        \[
            (\rho,H) \leq  (\rho',H')
            :\iff
            \left( H\leq H' \quad\text{ and }\quad \rho= \rho'|_{H}\right).
        \]
     Clearly, any chain $\mathcal{K}=\{(\rho_{i},H_{i})\}_{i\in I}$ in $P$ has an upper bound in $P$, namely by defining on the supergroup $\cup_{i\in I} H_{i}$ of $\tilde\pi( \mathcal{F})$ the homomorphism $H_i\ni h \mapsto \rho_{i}(h)$.
     By Zorn's Lemma, there must exist a maximal element $(\rho, H)$ in $P$. To show that $H=S(u)$, assume otherwise, meaning that there exists $s\in S(u)\setminus H$, so that $\tilde{H}\coloneqq  \langle H, s\rangle$ is a proper supergroup of $H$. There are two cases:
\begin{description}
            \item[Case 1] If there exists a minimal $m\in\mathbb{Z}_{>1}$ such that $s^m\in H$, then let $z_0$ be any $m$th root of $\rho(s^{m})\in\mathbb{T}$. We let
            \begin{align*}
                \tilde{\rho}\colon \tilde{H}\to\mathbb{T}
                \quad
                \text{ be given by }
                \quad
                \tilde\rho(hs^k)= \rho(h)z_0^k
                \quad\text{ where }\quad h\in H, k\in\mathbb{Z}.
            \end{align*}
            This map is defined on all of $\tilde{H}$ since $S(u)$ is abelian, and it is well defined: if $hs^k=h's^{k'}$, then $s^{k-k'}=h\inv h' \in H$, so that $k-k'=m\ell$ for some $\ell\in\mathbb{Z}$ and hence
            \[
                \rho(h')z_0^{k'} [\rho(h)z_0^k]\inv
                =
                \rho(h\inv h')z_0^{k'-k}
                =
                \rho(s^{k-k'})z_0^{-m\ell}
                =
                (\rho(s^{m})z_0^{-m})^{\ell}.
            \]
            This equals $1$ 
            by choice of $z_0$, proving that $\rho(h')z_0^{k'} =  \rho(h)z_0^k$. The map is furthermore a homomorphism that extends the homomorphism $\rho$.

            \item[Case 2] If no $m\in\mathbb{Z}\setminus\{0\}$ satisfies $s^m\in H$, then we may let
            \begin{align*}
                \tilde{\rho}\colon \tilde{H}\to\mathbb{T}
                \quad
                \text{ be given by }
                \quad\tilde\rho(hs^k)= \rho(h)
                \quad\text{ where }\quad h\in H, k\in\mathbb{Z}.
            \end{align*}
            As in the first case, this map is defined on all of $\tilde{H}$, it is well defined since $hs^k=h's^{k'}$ implies first $k-k'=0$ and thus $h=h'$, and it is a homomorphism that extends $\rho$.
        \end{description}
     In either of the two cases, the existence of $(\tilde\rho,\tilde H)$ contradicts maximality of $(\rho,H)$. This proves that $H=S(u)$, and so $\rho$ is an element of $\widehat{S(u)}$, the (untwisted) dual group. 
     
     Now, since $\Phi$ is an equivariant isomorphism, the map
    \[
        \kappa\colon \E_{S}(u)\to\mathbb{T}
        \quad\text{ determined by }\quad
        \kappa (\Phi(z,s))= z\, \overline{\rho (s)}
    \]
    is an element of $\twPD(u)$. Furthermore, if $\sigma\in [e,\E_{S}(u)]$, then since $\rho$ extends $\rho_0$, we have $\sigma=\Phi(\rho(s),s)$  for $s\coloneqq  \pi(\sigma)$, so that
    \[
        \kappa (\sigma)
        =
        \kappa \bigl(\Phi(\rho(s),s)\bigr)
        =
        \rho(s)\, \overline{\rho (s)}
        =
        1,
    \]
    meaning that $\kappa$ satisfies Equation~\eqref{eq:condition on kappa,v2}. By Lemma~\ref{lem:iso of Q x twPD}, 
   $(\dot{e},\kappa)$ is an isotropy point of $\Weylgpd $.
\end{proof}

\begin{proof}[Proof of Theorem~\ref{thm:diag}]
Since every $\cs$-diagonal is, in particular, a Cartan subalgebra \cite[Definition 2.11]{pitts2022normalizers}, a lot of the heavy lifting can be done by invoking \cite[Theorem 3.3]{DWZ:2025:Twist}:  firstly, we automatically have that \ref{it:diag:B,1}$\iff$\ref{it:diag:B,2}. Then, note that the following statements are equivalent:
\begin{itemize}
    \item $B=\csr(S; \E_{S} )$ is  a $\cs$-diagonal in $A=\csr(G;\E)$;
    \item $(A,B)$ is a Cartan pair whose Weyl groupoid $W_{B\subset A}$ is principal \cite{Kum:Diags};
    \item $(A,B)$ is a Cartan pair, and the groupoid $\Weylgpd $ is principal (Theorem~\ref{thm:Weyl groupoid}).
\end{itemize}
    Thus, by\footnote{Here, we have made use of the fact that the maps
    \begin{align*}
    [e,\E_{S}]
    &\to \left\{ \sigma\inv e\sigma : \sigma\in  \E_{S} (u)\right\},
    &
    f&\mapsto ef,
    \qquad\text{ and}
    \\
    [\pi(e),S]
    &\to
    \left\{ s\inv \pi(e)s : \sigma\in  S (u)\right\},
    &
    \pi(f)&\mapsto \pi(ef),
    \end{align*}
    are bijections, so that the conditions~\eqref{eq:thm:DWZ:maximal} respectively \eqref{eq:thm:DWZ:ricc} coincide with the statements (\textsf{max}) and (\textsf{ricc}) as they appear in \cite[Theorem 3.3(iii)]{DWZ:2025:Twist}.} \cite[Theorem 3.3, (i)$\iff$(iii)]{DWZ:2025:Twist}, we conclude that the statement in Theorem~\ref{thm:diag}\ref{it:diag:B,1}  is equivalent to the following:
  \begin{enumerate}[label=\textup{(\roman*)}, start=5]
  \em
    \item\label{it:thm:DWZversion} 
   ~$S$ satisfies \eqref{eq:thm:DWZ:maximal} and \eqref{eq:thm:DWZ:ricc} of Theorem~\ref{thm:DWZ},
     ~$S$ is closed and normal in~$G$, and the groupoid $\Weylgpd $ is principal.
  \end{enumerate}
\noindent Thus, it suffices to show that 
\ref{it:thm:DWZversion}$\iff$\ref{it:diag:S}. Since~$S$ is closed (resp.\ normal) in~$G$ if and only if $\E_{S}$ is closed (resp.\ normal) in $\E$, we see that in all three statements, we do not need to worry about those properties anymore and may simply assume them.

    \ref{it:thm:DWZversion}$\implies$\ref{it:diag:S}: Thanks to~\eqref{eq:thm:DWZ:maximal}, we can invoke  \cite[Corollary 3.17]{DWZ:2025:Twist} to conclude that $ \E_{S} $ is abelian and contains $G\z$. Since we further assumed~$G$ to be \etale\ and~$S$ to be open, this shows that we are in the setting of Assumption~\ref{as:E,G,S}. 
It only remains to check that \eqref{eq:diag:S} holds, so let $e\in\Iso{\E}$ be such that $\pi$ is injective on $[e, \E_{S} ]$; we must show that $e\in\E_{S}$. By Lemma~\ref{lem:not emptyset with Ad => not principal}, there exists $\kappa\in \twPD $ such that 
   $(\dot{e},\kappa)\in\Iso{\Weylgpd }$. Since $\Weylgpd $ is principal, this implies $(\dot{e},\kappa)$ is a unit, i.e., that $e\in  \E_{S} $.

     \ref{it:diag:S}$\implies$\ref{it:thm:DWZversion}: 
     Since $\pi$ is injective on the empty set, we have
     \begin{equation*}
        \{e\in \Iso{\E}: |[e,\E_{S}]|=0\}
        \overset{\eqref{eq:diag:S}}{\subset }
        \E_{S}
        .
     \end{equation*}
     As $|[e,\E_{S}]|=0$ if and only if $r(e)\notin S$, this implies that
     \(
        G\z \subset \E_{S}.
     \)
     Similarly, since $\pi$ is injective on any singleton-set, we have
     \[
        \{e\in \Iso{\E}: |[e,\E_{S}]|=1\}
        \overset{\eqref{eq:diag:S}}{\subset }
        \E_{S}.
     \]
     Since $\E_{S}$ is abelian, we also have the other containment, so we arrive at:
     \begin{equation}\label{eq:1}
        G\z\subset \E_{S}=\{e\in \Iso{\E}: 1=|[e,\E_{S}]|\}.
    \end{equation}
     
    Since $\E_{S}$ is open, this shows that \eqref{eq:thm:DWZ:maximal} holds. Now, if $e\in\Iso{\E}$ is such that $\left|[e,\E_{S}]\right|=\left|[\pi(e),S\vphantom{\E_{S}}
      ]\right|<\infty$, then $\pi$ is injective on $[e,\E_{S}]$, so Assumption~\eqref{eq:diag:S} combined with \eqref{eq:1} implies that $1=\left|[e,\E_{S}]\right|$. In particular, 
    \[
    \left\{
      e\in \Iso{\E}: 1<\left|[e,\E_{S}]\right|=\left|[\pi(e),S\vphantom{\E_{S}}
      ]\right|<\infty
      \right\}
      =
      \emptyset,
    \]
    which implies that \eqref{eq:thm:DWZ:ricc} holds.  
    
    It  remains to show that $\Weylgpd $ is principal, so let $(\dot{e},\kappa)$ be an isotropy point. Take $f_{1},f_{2}\in [e, \E_{S} ]$ such that $\pi(f_{1})=\pi(f_{2})$, so there exists $z\in\mathbb{T}$ such that $f_{1}=z\cdot f_{2}$. Since $(\dot{e},\kappa)$ is an isotropy point, it follows from Lemma~\ref{lem:iso of Q x twPD} that $u\coloneqq  r(e)$ equals $s(e)$ and that Equation~\eqref{eq:condition on kappa,v2} holds; in particular, $\kappa(f_{1})=1=\kappa(f_{2})$. Since $\kappa\in \twPD(u)$, so that $\kappa(z\cdot f_{2})=z\kappa(f_{2})$, we deduce that $z=1$ and so $f_{1}=f_{2}$. In other words, $\pi$ is injective on the set $[e, \E_{S} ]$. By Assumption~\eqref{eq:diag:S}, this implies $e\in \E_{S}$, so $(\dot{e},\kappa)$ is a unit in $\Weylgpd $. This finishes the proof of \ref{it:diag:S}$\implies$\ref{it:thm:DWZversion}.

Regarding the final remark in the theorem,  ~$S$ is maximal among all subgroupoids of $\Iso{G}$ whose restricted twist is abelian because of \eqref{eq:1}, and 
\[
    \E_{S}
    \overset{\eqref{eq:1}}{=}
    \left\{e\in \Iso{\E}: 
    |[e, \E_{S} ]|=1\right\}
    \subset
    \left\{e\in \Iso{\E}: 
    \pi\text{ is injective on }[e, \E_{S} ]\right\}
    \overset{\eqref{eq:diag:S}}{\subset} \E_{S},
\]
so we must have equality in \eqref{eq:diag:S}.
\end{proof}

\begin{proof}[Proof of Theorem~\ref{thm:B}]
    Since  Theorem~\ref{thm:B} assumes that $S$ is normal, the equivalent conditions of Theorem~\ref{thm:diag} become: $B$ is a $\cs$-diagonal if and only if $\E_{S}$ is abelian and closed and contains all $\{e\in\Iso{\E}:\pi\text{ is injective on }[e,\E_{S}]\}$. This containment means exactly that, if $e\in \E(u)\setminus\E_{S}(u)$, then  there exists $\sigma\in \E_{S}(u)$ such that $e\inv \sigma\inv e\sigma \neq u$ but $\pi(e\inv \sigma\inv e \sigma)=u$. In other words, $e\sigma=z\cdot \sigma e $ for some $z\neq 1$.
\end{proof}

\subsection{Corollaries and applications of Theorem~\ref{thm:diag}}\label{ssec:corollaries:B}

\begin{corollary}\label{cor:partial answer}
    Suppose 
  $\E$ is a twist 
  over a \LCH, \etale\
  groupoid~$G$, and that~$S$ is a clopen and normal subgroupoid of~$G$ for which $\E_{S}$ is abelian. If~$S$ satisfies Condition~\eqref{eq:diag:S},
    then the action of $G/S$ on $\twPD$ defined in Corollary~\ref{cor:G/S:HleftX is an action} is free.
\end{corollary}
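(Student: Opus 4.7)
The plan is to combine Theorem~\ref{thm:diag} with Theorem~\ref{thm:Weyl groupoid} and the classical Kumjian--Renault result that identifies $\cs$-diagonals with principal Weyl groupoids. First, I would verify that the hypotheses of the corollary fit into statement \ref{it:diag:S} of Theorem~\ref{thm:diag}. Indeed, $S$ being clopen in $G$ means $\E_{S} = \pi\inv(S)$ is clopen (hence closed) in $\E$; normality of $S$ translates to normality of $\E_{S}$; and $\E_{S}$ is abelian by hypothesis. Together with Condition~\eqref{eq:diag:S}, this is exactly statement \ref{it:diag:S} of Theorem~\ref{thm:diag}.

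Next, I would invoke Theorem~\ref{thm:diag} to conclude that $B \coloneqq \csr(S;\E_{S})$ is a $\cs$-diagonal in $A \coloneqq \csr(G;\E)$. In particular, $(A, B)$ is a Cartan pair. By Kumjian's reconstruction theorem \cite[Theorems 2.9$^{\circ}$ and 3.1$^{\circ}$]{Kum:Diags}, as recalled in the introduction, the associated Weyl groupoid $W_{B\subset A}$ is then principal.

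The final step is to transport this principality back to the action groupoid. By Theorem~\ref{thm:Weyl groupoid}, there is an isomorphism $\delta\colon \Weylgpd \to W_{B\subset A}$ of topological groupoids. Since $W_{B\subset A}$ is principal, so is $\Weylgpd$. By Remark~\ref{rmk:free=principal etc}, an action groupoid is principal exactly when the underlying action is free, which yields the claim.

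There is no real obstacle here: the corollary is essentially a bookkeeping application, as all the substantial work has been done in Theorem~\ref{thm:diag} (equivalence of the algebraic condition \ref{it:diag:S} with being a $\cs$-diagonal) and Theorem~\ref{thm:Weyl groupoid} (identification of the Weyl groupoid with $\Weylgpd$). The only subtlety worth flagging is that Condition~\eqref{eq:diag:S} a priori seems weaker than the setup of Assumption~\ref{as:E,G,S,v2}; but as shown inside the proof of Theorem~\ref{thm:diag} (implication \ref{it:diag:S}$\implies$\ref{it:thm:DWZversion}), it already forces $G\z \subset \E_{S}$ (so $S\z = G\z$) together with conditions \eqref{eq:thm:DWZ:maximal} and \eqref{eq:thm:DWZ:ricc}, placing us squarely in Assumption~\ref{as:E,G,S,v2} and thereby making Theorem~\ref{thm:Weyl groupoid} applicable.
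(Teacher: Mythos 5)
Your proposal is correct and follows exactly the paper's argument: apply Theorem~\ref{thm:diag}, \ref{it:diag:S}$\implies$\ref{it:diag:B,1}, to get a $\cs$-diagonal, invoke Kumjian's result to conclude the Weyl groupoid is principal, identify it with $\Weylgpd$ via Theorem~\ref{thm:Weyl groupoid}, and finish with Remark~\ref{rmk:free=principal etc}. Your closing observation that Condition~\eqref{eq:diag:S} forces the setting of Assumption~\ref{as:E,G,S,v2} is a worthwhile extra check that the paper leaves implicit.
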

\begin{proof}
    By Theorem~\ref{thm:diag}, \ref{it:diag:S}$\implies$\ref{it:diag:B,1}, $\csr(S; \E_{S} )$ is a $\cs$-diagonal in $\csr(G;\E)$.  By \cite{Kum:Diags}, the associated Weyl groupoid is principal which, by Theorem~\ref{thm:Weyl groupoid}, is given by $\Weylgpd $. The claim now follows from Remark~\ref{rmk:free=principal etc}.
\end{proof}

Corollary~\ref{cor:partial answer} in particular implies the following discrete case of \cite[Theorem 7.12]{Renault:2023:Ext}, where the twist was furthermore assumed to be irreducible.
\begin{corollary}
    Suppose that $E$ is a twist over a discrete abelian group $\Gamma$, and that $ S \leq\Gamma$ is such that $ E_{ S }$ is maximal abelian in $ E $. Then 
    the action of $\Gamma/ S $ on $\twPD[ E ]$ is 
    free. 
\end{corollary}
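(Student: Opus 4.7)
The plan is to verify the hypotheses of Corollary~\ref{cor:partial answer} applied to $G=\Gamma$ and $S$, and then read off the conclusion. Since $\Gamma$ is a discrete group, it is a \LCH, \etale\ groupoid with one unit; every subgroup is clopen; and since $\Gamma$ is abelian, every subgroup is automatically normal, so $S$ is clopen and normal. The abelianness of $E_{S}$ is given by the hypothesis that $E_{S}$ is maximal abelian in $E$.

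The key step is to verify Condition~\eqref{eq:diag:S}, namely that
\[
\{e\in \Iso{E} : \pi\text{ is injective on }[e,E_{S}]\} \subset E_{S}.
\]
Since $\Gamma$ is a group, $\Iso{E}=E$. The crucial observation is that for any $e\in E$ and $\sigma\in E_{S}$, the commutator $[e,\sigma]=e^{-1}\sigma^{-1}e\sigma$ satisfies $\pi([e,\sigma])=\pi(e)^{-1}\pi(\sigma)^{-1}\pi(e)\pi(\sigma)=1$ because $\Gamma$ is abelian. Thus $[e,E_{S}]$ is entirely contained in the fibre $\pi^{-1}(1)\cong\mathbb{T}$, on which $\pi$ is constant. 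Consequently, $\pi$ is injective on $[e,E_{S}]$ if and only if $[e,E_{S}]=\{1\}$, which in turn means precisely that $e$ commutes with every element of $E_{S}$.

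Given such an $e$, the subgroup $\langle E_{S},e\rangle \leq E$ is abelian: it is generated by the abelian subgroup $E_{S}$ together with the element $e$, which commutes with all of $E_{S}$ and with itself. By the maximality of $E_{S}$ as an abelian subgroup of $E$, this forces $\langle E_{S},e\rangle = E_{S}$, and hence $e\in E_{S}$. This establishes Condition~\eqref{eq:diag:S}, so Corollary~\ref{cor:partial answer} applies and yields that the action of $\Gamma/S$ on $\twPD[E]$ is free.

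I expect no real obstacle: the entire content is the translation of \emph{maximal abelian} into Condition~\eqref{eq:diag:S} using the fact that in an abelian base, commutators automatically project to the identity, so $\pi$-injectivity on $[e,E_{S}]$ degenerates to centralizing $E_{S}$.
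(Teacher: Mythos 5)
Your proposal is correct and follows the same route as the paper: the paper's own proof is just a compressed version of your argument, asserting that discreteness and abelianness of $\Gamma$ give that $S$ is clopen and normal, that maximal abelianness of $E_S$ gives Condition~\eqref{eq:diag:S}, and then invoking Corollary~\ref{cor:partial answer}. Your spelled-out verification that, over an abelian base, $\pi$-injectivity on $[e,E_S]$ degenerates to $e$ centralizing $E_S$ (whence $e\in E_S$ by maximality) is exactly the content the paper leaves implicit.
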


\begin{proof}
    Since $\Gamma$ is discrete abelian, $S$ is clopen and normal, and since $E_{S}$ is maximal abelian,~\eqref{eq:diag:S} holds.
    We may thus invoke Corollary~\ref{cor:partial answer} to deduce that the action of $\Gamma/S$ on $\twPD[E]$ is free.
\end{proof}

\begin{corollary}
  Suppose~$G$ is a \LCH, \etale\
  groupoid. Then~$G$ is principal if and only if $C_0(G\z)$ is a $\cs$-diagonal in $\csr(G;\E)$ for any (and hence every) twist $\E$ over~$G$.
\end{corollary}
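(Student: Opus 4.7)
The plan is to apply Theorem~\ref{thm:B} to the subgroupoid $S=G\z$ and use centrality of the twist to force the condition in (ii) of that theorem to collapse to principality of~$G$. Concretely, fix any twist $\E$ over~$G$ and set $S=G\z$. I first check that $S$ satisfies the hypotheses of Theorem~\ref{thm:B}: since $G$ is \etale, $G\z$ is open in~$G$, and every element $g\in G$ satisfies $g S=\{g\}= Sg$, so~$S$ is trivially normal. Moreover, $ \E_{S} =\pi\inv(G\z)=\iota(\mathbb{T}\times G\z)$ is abelian (each fibre is a copy of $\mathbb{T}$) and, being the preimage under the continuous map $\pi$ of the closed set $G\z\subset G$, is closed in $\E$. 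Finally, the standard identification of $\csr(G\z;\mathbb{T}\times G\z)$ with $C_0(G\z)$ will be used to rewrite $\csr(S; \E_{S})=C_0(G\z)$.

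With these preliminaries, the equivalence of Theorem~\ref{thm:B} reduces our task to the following assertion: $C_0(G\z)$ is a $\cs$-diagonal in $\csr(G;\E)$ if and only if for every $e\in \E(u)\setminus \E_{S}(u)$, there exist $\sigma\in \E_{S}(u)$ and $z\in\mathbb{T}\setminus\{1\}$ with $e\sigma=z\cdot\sigma e$. The key observation is that elements of $ \E_{S}(u)=\{\iota(w,u):w\in\mathbb{T}\}$ are \emph{central} in $\E$ by the centrality condition of the twist (see item (4) after~\eqref{eq:def twist}). Consequently $\sigma e=e\sigma$ always, so the relation $e\sigma=z\cdot\sigma e$ forces $z=1$. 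Thus the diagonal condition of Theorem~\ref{thm:B} holds \emph{only} in the vacuous case when there are no elements $e\in \E(u)\setminus \E_{S}(u)$ in the isotropy, i.e.\ when $\Iso{\E}= \E_{S}$.

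It remains to translate $\Iso{\E}= \E_{S}$ back down to $G$. Using that $\pi$ restricts to a homeomorphism of unit spaces, one checks $\Iso{\E}=\pi\inv(\Iso{G})$, while by construction $\E_{S}=\pi\inv(G\z)$. Applying $\pi$ (and using that $\pi$ is surjective) therefore yields $\Iso{\E}= \E_{S}\iff\Iso{G}=G\z$, which is precisely the principality of~$G$. Combining the two equivalences gives: $C_0(G\z)$ is a $\cs$-diagonal in $\csr(G;\E)$ $\iff$ $G$ is principal. Since the right-hand condition is independent of the twist~$\E$, the ``any and hence every'' phrasing follows at once.

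The main potential pitfall is simply being careful about the various identifications ($S\z=G\z$, $\E_{S}=\pi\inv(G\z)$, $\Iso{\E}=\pi\inv(\Iso{G})$, and $\csr(G\z;\mathbb{T}\times G\z)\cong C_0(G\z)$), but no genuine difficulty arises: centrality of the twist, which is the built-in algebraic feature distinguishing the traditional case from the non-traditional ones studied in the rest of the paper, does all the work.
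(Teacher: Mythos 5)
Your proposal is correct and follows essentially the same route as the paper: both apply the paper's characterization of non-traditional $\cs$-diagonals to $S=G\z$ and use centrality of the twist to collapse the commutator condition, forcing $\Iso{\E}=\E_{S}$ and hence $\Iso{G}=G\z$. The only cosmetic difference is that you invoke the packaged Theorem~\ref{thm:B} (observing $e\sigma=\sigma e$ forces $z=1$) while the paper works directly with Condition~\eqref{eq:diag:S} of Theorem~\ref{thm:diag} (observing $[e,\E_{S}]=\{r(e)\}$ so $\pi$ is trivially injective on it); these are the same computation in two equivalent phrasings.
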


\begin{proof}
    Suppose~$G$ is principal, and $\E$ is any twist over~$G$. Then $S\coloneqq  \Iso{G}=G\z$ is clopen and normal in~$G$, and $ \E_{S} =\pi\inv(G\z)=\iota(\mathbb{T}\times G\z)$ is abelian. Since
    \[
        \Iso{\E}\setminus \E_{S}  = \pi\inv (\Iso{G}\setminus S) = \pi\inv (\emptyset)=\emptyset,
    \]
    Condition~\eqref{eq:diag:S} is vacuously true. Theorem~\ref{thm:diag}, \ref{it:diag:S}$\implies$\ref{it:diag:B,1}, shows that $C_0(G\z)$ is a $\cs$-diagonal in $\csr(G;\E)$.

    Conversely, suppose $B=C_0(G\z)$ is a $\cs$-diagonal in $\csr(G;\E)$ for some given twist $\E$. Since $G\z$ is normal in~$G$, it follows from \cite[Corollary 3.23]{DWZ:2025:Twist} that the conditions in Theorem~\ref{thm:diag}\ref{it:diag:B,1} are satisfied for $B$ when thought of as $\csr(S; \E_{S} )$ for $S=G\z$. By \ref{it:diag:B,1}$\implies$\ref{it:diag:S} of Theorem~\ref{thm:diag}, we know that Condition~\eqref{eq:diag:S} holds.
    As $ \E_{S} =\pi\inv(G\z)=\iota(\mathbb{T}\times G\z)$, centrality  of $\iota$ implies $[e, \E_{S} ]=\{r(e)\}$ for any $e\in \Iso{\E}$; in particular, $\pi$ is injective on $[e,\E_{S}]$. Thus, Condition~\eqref{eq:diag:S} becomes the assertion that the set
    \(
        \Iso{\E}
    \)
    coincides with
    \(  \E_{S},
    \)
    which implies that $\Iso{G}=\pi(\Iso{\E})$ equals $\pi(\E_{S})=G\z$, meaning that~$G$ is principal.
\end{proof}

We record a remarkable corollary for the untwisted case:

\begin{corollary}\label{cor:diag:untwisted}
    Suppose~$G$ is a \LCH, \etale\ 
    groupoid, and~$S$ is an open subgroupoid of~$G$. Then the following statements  are equivalent.
  \begin{enumerate}[label=\textup{(\roman*)}]
    \item\label{it:1}
    $\csr(S)$ is a $\cs$-diagonal in $\csr(G)$ for which
    \begin{equation*}  
        \{
        h\in C_{c}(G):
        \suppo(h) \text{ is a bisection}
        \} \subset N(\csr(S)).
    \end{equation*}
    \item\label{it:2}
    $\csr(S)$ is a $\cs$-diagonal in $\csr(G)$, and~$S$ is normal in~$G$;
    \item\label{it:3} $S=\Iso{G}$ is abelian.
  \end{enumerate}
\end{corollary}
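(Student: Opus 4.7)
The plan is to run the cycle \ref{it:3}$\implies$\ref{it:1}$\implies$\ref{it:2}$\implies$\ref{it:3}, invoking Theorem~\ref{thm:diag} at each step with the trivial twist $\E=\mathbb{T}\times G$. The single observation that organizes the whole argument is the commutator identity
\[
    [(z,g),(w,s)] = (1,[g,s])\quad\text{in }\mathbb{T}\times G,
\]
which follows from centrality of the $\mathbb{T}$-factor by a direct calculation whenever the product makes sense. In particular, for every $e\in\Iso{\E}$, the restriction of $\pi$ to $[e,\E_{S}]$ is just the identity on the $G$-coordinate, hence injective. Therefore, in the untwisted setting, condition~\eqref{eq:diag:S} collapses to the single containment $\Iso{\E}\subset \E_{S}$, equivalently $\Iso{G}\subset S$.

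For \ref{it:3}$\implies$\ref{it:1}, assuming $S=\Iso{G}$ is abelian, the restricted twist $\E_{S}=\mathbb{T}\times S$ is abelian, it is closed in $\E$ (because $\Iso{G}$ is closed in the Hausdorff groupoid $G$), and it is normal in $\E$ (because $\Iso{G}$ is stable under conjugation in $G$). The observation above makes~\eqref{eq:diag:S} automatic, so Theorem~\ref{thm:diag}\ref{it:diag:S} is verified, and its consequence \ref{it:diag:B,1} is statement~\ref{it:1}. For \ref{it:1}$\implies$\ref{it:2}, condition~\ref{it:1} is exactly Theorem~\ref{thm:diag}\ref{it:diag:B,1}, so~\ref{it:diag:S} delivers normality of $\E_{S}$ in $\E$; passing to the quotient by $\mathbb{T}$ gives normality of $S$ in $G$. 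For \ref{it:2}$\implies$\ref{it:3}, normality of $S$ already forces (via \cite[Corollary 3.25]{DWZ:2025:Twist}) every function in $C_{c}(G)$ with bisection support to be a normalizer of $\csr(S)$, so~\ref{it:1} holds; Theorem~\ref{thm:diag} then gives \ref{it:diag:S}, which in the untwisted case says that $S$ is abelian and, via the observation above, that $\Iso{G}\subset S$. Combined with $S\subset \Iso{G}$ from normality, we conclude $S=\Iso{G}$.

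The only delicate point is the commutator computation isolated at the outset: in the twisted setting, $\pi$ can fail to be injective on $[e,\E_{S}]$ in interesting ways (this is exactly what makes condition~\eqref{eq:thm:DWZ:ricc} necessary to distinguish Cartan from diagonal), but the trivial twist neutralizes this phenomenon entirely. Once this is recognized, the corollary is a direct readout of Theorem~\ref{thm:diag}.
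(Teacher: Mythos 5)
Your proof is correct and takes essentially the same route as the paper: the whole corollary rests on the observation that in the trivial twist every commutator $[(z,g),(w,s)]$ has trivial $\mathbb{T}$-component, so $\pi$ is automatically injective on $[e,\E_{S}]$ and condition~\eqref{eq:diag:S} collapses to $\Iso{G}\subset S$, after which everything is read off from Theorem~\ref{thm:diag}. The only difference is organizational (you run the cycle \ref{it:3}$\Rightarrow$\ref{it:1}$\Rightarrow$\ref{it:2}$\Rightarrow$\ref{it:3} and extract normality in \ref{it:1}$\Rightarrow$\ref{it:2} from Theorem~\ref{thm:diag}\ref{it:diag:S}, where the paper instead cites \cite{DWZ:2025:Twist} directly for \ref{it:1}$\Leftrightarrow$\ref{it:2}), which is immaterial.
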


\begin{proof}
The implication \ref{it:2}$\implies$\ref{it:1} follows from \cite[Corollary 3.23]{DWZ:2025:Twist}, and \ref{it:1}$\implies$\ref{it:2} is contained in the proof of \cite[Theorem 3.3, (ii)$\implies$(iv)]{DWZ:2025:Twist}.

\ref{it:1}$\implies$\ref{it:3}:
By Theorem~\ref{thm:diag}, \ref{it:diag:B,1}$\implies$\ref{it:diag:S}, $\E_{S}$ and hence~$S$ is abelian, and Condition~\eqref{eq:diag:S} holds.  If $e=(z,g)\in \E(u)$, then 
\[
    [e, \E_{S} ]
    =
    \{ (1,g\inv s\inv g s) : s\in S(u)\},
\]
so that $\pi\colon (1,g\inv s\inv g s)\mapsto g\inv s\inv g s$ is injective on $[e, \E_{S} ]$ for every $e\in \Iso{\E}$. In other words, Condition~\eqref{eq:diag:S} implies $\E_{S}=\Iso{\E}$, i.e., $S=\Iso{G}$.

\ref{it:3}$\implies$\ref{it:2}: By assumption on~$S$, we have that $S=\Iso{G}$ is open and abelian. Since $\Iso{G}$ is normal and closed in~$G$, it follows that $\mathbb{T}\times S$ is abelian, closed, and normal in $\mathbb{T}\times \E$. Since Condition~\eqref{eq:diag:S}  holds vacuously for $\mathbb{T}\times S$, we have verified all conditions in Theorem~\ref{thm:diag}\ref{it:diag:S} applied to the situation in which $\E$ is the trivial twist over~$G$, and so by
Statement~\ref{it:diag:B,1} of that same theorem,
$\csr(S)\cong \csr(S;\mathbb{T}\times S)$ is a $\cs$-diagonal in $\csr(G)\cong \csr(G;\mathbb{T}\times G)$.
\end{proof}

\begin{example}[continuation of Examples~\ref{ex:c_theta:setup} and~\ref{ex:c_theta:Weyl groupoid}]\label{ex:c_theta:diag}
   We go back to the rotation algebras $A_{\theta} \cong \csr(\mathbb{Z}^2,\cocycle_{\theta})$ with $\cocycle_{\theta}$ as at~\eqref{eq:c_theta} and $\E_{\theta}=\mathbb{T}\times_{\cocycle_{\theta}}\mathbb{Z}^2$ the associated twist; let $\lambda=\mathsf{e}^{2\pi i\theta}$. We will verify that all Cartan subalgebras arising from subgroups~$S$ of $\mathbb{Z}^{2}$ are, in fact, $\cs$-diagonals.
   By Theorem~\ref{thm:diag}, \ref{it:diag:S}$\implies$\ref{it:diag:B,1},  it suffices to check that Condition~\eqref{eq:diag:S} is satisfied. 
    Since $\mathbb{Z}^{2}$ is abelian, we have $\pi([f, \E_{\theta,S} ])=\{(1;0,0)\}$ for any $f\in \E$; so 
    we must show that, whenever $[f,\E_{\theta,S}]$ is a singleton, then $\pi(f)\in S$. If we fix $f=(z;\vec{h})$,  then by Equation~\eqref{eq:c_theta:conjugation},
    \begin{align}\label{eq:theta:commutators}
        f\inv(w;\vec{s})\inv f(w;\vec{s}) 
        =
        \bigl(
        \lambda^{s_{1} h_{2} - s_{2}h_{1}};0,0\bigr).
    \end{align}
    We now have to make a case distinction:

    \myparagraph{Case 1: $\theta\notin\mathbb{Q}$}
    As before, we only need to consider $S=\mathbb{Z}\cdot (m,n)$ with $\gcd(m,n)=1$. In this case, Equation~\eqref{eq:theta:commutators} shows that 
    \[
    [f,\E_{\theta,S}]
    =
    \left\{
    \bigl(
        \lambda^{r( m h_{2} - nh_{1})};0,0\bigr)
    :
    r\in\mathbb{Z}
    \right\},
    \]
    so since $\theta$ is irrational, we conclude that  $ [f,\E_{\theta,S}]$ is a singleton
    if and only if $m h_{2} = nh_{1}$.  Since $\gcd(m,n)=1$, this implies that $h_{1}=mk$ for some $k\in\mathbb{Z}$, so $h_{2}=nh_{1}/m=nk$, and thus 
    $\pi(f)=\vec{h}=(mk,nk)$ is an element of $\mathbb{Z}\cdot (m,n)=S$. We conclude that $\csr(\mathbb{Z}\cdot (m,n),\cocycle_{\theta})$ is a $\cs$-diagonal in $\csr(\mathbb{Z}^{2},\cocycle_{\theta})$.

    \myparagraph{Case 1: $\theta\in\mathbb{Q}$} As always, the case $\theta=0$ is trivial, so let $\theta=p/q$ for  $p\in\mathbb{Z}^{\times}$, $q\in \mathbb{N}$, and $\gcd(p,q)=1$; fix $S=\mathbb{Z}\cdot (k,0)\oplus \mathbb{Z}\cdot (m,n)$ with $nk=q$. In this case, Equation~\eqref{eq:theta:commutators} shows that 
    \[
    [f,\E_{\theta,S}]
    =
    \left\{
    \bigl(
        \lambda^{lkh_{2}+r(mh_{2}  - nh_{1})};0,0\bigr)
    :
    l,r\in\mathbb{Z}
    \right\}.
    \]    
    Since $\gcd(p,q)=1$ and $q=nk$, we deduce that $[f,\E_{\theta,S}]$ is a singleton if and only if $\pi(f)=\vec{h}$ satisfies
    \[
     l  kh_{2} + r(mh_{2}-nh_{1}) \in nk\mathbb{Z}
    \text{ for all }  l ,r\in\mathbb{Z}.
    \]
    Choosing $r=0,l=1$ implies that $h_{2}=nc$ for some $c\in\mathbb{Z}$. The above condition then becomes for $l=0,r=1$:
    \[
     n(mc-h_{1}) \in nk\mathbb{Z},\quad\text{ i.e., }\quad
     h_{1} = mc+kd \text{ for some }d\in\mathbb{Z}.
    \]
    We have shown that $\vec{h}=d(k,0)+c(m,n)$, which is an element of $ S$, so that $\csr(\mathbb{Z}\cdot (k,0)\oplus \mathbb{Z}\cdot (m,n),\cocycle_{p/q})$ is a $\cs$-diagonal in $\csr(\mathbb{Z}^{2},\cocycle_{p/q})$.
\end{example}

\section{Open problems}\label{sec:questions}

\begin{problem}
    Prove Theorem~\ref{thm:diag} directly (for example along the same lines of  \cite[Theorem 3.3]{DWZ:2025:Twist}) without appealing to Theorem~\ref{thm:A} and \cite{Kum:Diags}.
\end{problem}

Corollary~\ref{cor:partial answer} and Remark~\ref{rmk:effective} are partial answers to the following problem, posed by Jean Renault:
\begin{problem}\label{prob:Jean}
        Given a twist $\E$ over a locally compact (not necessarily Hausdorff or \etale) groupoid~$G$, find a necessary and
        sufficient condition on a closed normal subgroupoid~$S$ of~$G$ so that the action of
        $G/S$ on the twisted spectrum $\twPD$ is free or topologically free.
\end{problem}

\printbibliography

\end{document}